\def\eps{\varepsilon}
\newcommandx{\yaHelper}[2][1=\empty]{%
\ifthenelse{\equal{#1}{\empty}}%
  { \ensuremath{ \scriptstyle{ #2 } } } 
  { \raisebox{ #1 }[0pt][0pt]{ \ensuremath{ \scriptstyle{ #2 } } } }  
}   
\newcommandx{\yrightarrow}[4][1=\empty, 2=\empty, 4=\empty, usedefault=@]{%
  \ifthenelse{\equal{#2}{\empty}}
  { \xrightarrow{ \protect{ \yaHelper[ #4 ]{ #3 } } } } 
  { \xrightarrow[ \protect{ \yaHelper[ #2 ]{ #1 } } ]{ \protect{ \yaHelper[ #4 ]{ #3 } } } } 
}
\newcommand{\Mosco}{\hspace{-.05cm}\yrightarrow{\scriptscriptstyle \mathrm{M}}[-1pt]\hspace{-.05cm}}
\def\N{{\mathbb N}}
\newtheorem{theorem}{Theorem}
\newtheorem{lemma}[theorem]{Lemma}
\newtheorem{corollary}[theorem]{Corollary}
\newtheorem{proposition}[theorem]{Proposition}
\theoremstyle{definition}
\newtheorem{definition}[theorem]{Definition}
\newtheorem{example}[theorem]{Example}
\def\dist{{\rm dist\,}}
\def\supp{{\rm supp\,}}
\newcommand{\R}{\mathbb{R}}
\newcommand{\brac}[1]{\left (#1 \right )}
\newcommand{\subsubset}{\subset\subset}
\newcommand{\barint}{
\rule[.036in]{.12in}{.009in}\kern-.16in \displaystyle\int }
\newcommand{\barcal}{\mbox{$ \rule[.036in]{.11in}{.007in}\kern-.128in\int $}}
\def\mvint_#1{\mathchoice
          {\mathop{\vrule width 6pt height 3 pt depth -2.5pt
                  \kern -8pt \intop}\nolimits_{\kern -3pt #1}}%
          {\mathop{\vrule width 5pt height 3 pt depth -2.6pt
                  \kern -6pt \intop}\nolimits_{#1}}%
          {\mathop{\vrule width 5pt height 3 pt depth -2.6pt
                  \kern -6pt \intop}\nolimits_{#1}}%
          {\mathop{\vrule width 5pt height 3 pt depth -2.6pt
                  \kern -6pt \intop}\nolimits_{#1}}}
\numberwithin{theorem}{section} \numberwithin{equation}{section}
\newcommand{\lap}{\Delta }
\newcommand{\aleq}{\precsim}
\newcommand{\ageq}{\succsim}
\newcommand{\aeq}{\approx}
\newcommand{\laps}[1]{(-\lap)^{\frac{#1}{2}}}
\newcommand{\lapms}[1]{I^{#1}}
\title{On a Fractional Version of a Murat Compactness Result and Applications}
\author{Harbir Antil}
\address[Harbir Antil]{Department of Mathematical Sciences and the Center for Mathematics and Artificial Intelligence (CMAI) 
George Mason University, Fairfax, VA 22030, USA. \newline
ORCiD: 0000-0002-6641-1449}
\email{hantil@gmu.edu}
\author{Carlos N. Rautenberg}
\address[Carlos N. Rautenberg]{Department of Mathematical Sciences and the Center for Mathematics and Artificial Intelligence (CMAI) 
George Mason University, Fairfax, VA 22030, USA.  \newline 
ORCiD: 0000-0001-9497-9296}
\email{crautenb@gmu.edu}
\author{Armin Schikorra}
\address[Armin Schikorra]{Department of Mathematics, University of Pittsburgh, Pittsburgh, PA 15261, USA.\newline
ORCiD: 0000-0001-9242-1782}
\email{armin@pitt.edu}
\keywords{Murat and Brezis, Fractional Sobolev spaces, Compactness, Mosco convergence, Boccardo and Murat}
\subjclass[2010]{
35K20,  	
35R11,  	
35S15,  	
65R20  	
}
\begin{document}
\maketitle

\begin{abstract}
The paper provides an extension, to fractional order Sobolev spaces, of the classical result of 
Murat and Brezis which states that the {cone of positive elements} in $H^{-1}(\Omega)$ compactly 
embeds in $W^{-1,q}(\Omega)$, for every $q < 2$ and for any open and bounded set $\Omega$ with Lipschitz boundary. 
In particular, our proof contains the classical result. Several new analysis tools are developed 
during the course of the proof to our main result which are of wider interest. Subsequently, we 
apply our results to the convergence of convex sets and establish a fractional version of the Mosco 
convergence result of Boccardo and Murat. {We conclude with an application of this result 
to quasi-variational inequalities.}
\end{abstract}


\section{Introduction and main results}

Let $\Omega \subset \R^N$ be an open  bounded set with Lipschitz boundary and let $\partial\Omega$ denote its boundary. For domains with $\partial \Omega$ of Lipschitz type, Murat \cite{FMurat_1981a} initially proved  that if a sequence of non-negative distributions converges weakly in $H^{-1}(\Omega)$ then this convergence is strong in $W^{-1,q}(\Omega)$ every $q < 2$. A direct and succinct  proof of this result was later given by Brezis in \cite{B81} without any assumption on the regularity of $\partial\Omega$. This result has found many applications in the literature, in particular, we mention the result on convergence of convex sets in \cite{BM79} by Boccardo/Murat: The set of elements $v\in H_0^1(\Omega)$ such that $v\geq \psi_n$ a.e. in $\Omega$ converges in the sense of Mosco to the set of elements (also in $H_0^1(\Omega)$) $v\geq \psi$ provided that $\psi_n\rightharpoonup \psi$ in $W^{1,p}(\Omega)$ where $p > 2$. The latter is a consequence of the Murat/Brezis results and determines a variety of stability results of variational inequalities and optimization problems.

The first goal of this paper is to prove an analogue of Murat/Brezis's result, also without any assumption on the boundary regularity, for fractional order Sobolev spaces. Subsequently, the second goal is to establish the fractional version of the Boccardo/Murat convergence result for closed and convex sets of unilateral type. We emphasize that, the fractional operators have recently found several realistic applications in geophysics and imaging science, see for instance \cite{weiss2020fractional,HAntil_SBartels_2017a,HAntil_ZDi_RKhatri_2020a}. 
{For $s \in (0,1]$ and $p \in (1,\infty)$ denote by $H^{s,p}_{00}(\Omega)$ the fractional Sobolev space $H^{s,p}(\R^n)$ restricted to functions which are zero in $\R^N \backslash \Omega$; for the precise definition see \eqref{eq:Hsp00}.} Our main result can be stated as follows:

\begin{theorem}\label{th:convergence}
Let $\Omega \subset \R^N$ be any open bounded set with Lipschitz boundary.
Assume that $h_n,h \in H_{00}^{s,2}(\Omega)^\ast$, $s \in (0,1]$ such that $h_n\geq 0$ for $n\in\mathbb{N}$ (in the sense of distributions)  and $h_n\rightharpoonup h$ in $H_{00}^{s,2}(\Omega)^\ast$ as $n\to \infty$. Then,
\begin{equation*}
	h_n \to h \qquad \text{in}\quad H^{-s,q}(\Omega) := H^{s,q'}_{00}(\Omega)^{\ast},
\end{equation*}
as $n\to\infty$, for any $1 < q < 2$.
\end{theorem}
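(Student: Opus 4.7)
The plan is to adapt Brezis's argument from \cite{B81} to the non-local setting. Every positive distribution in $H_{00}^{s,2}(\Omega)^{\ast}$ is represented by a positive Radon measure on $\Omega$, so each $h_n$ is such a measure and, by passing inequalities to the weak limit, so is $h$. The first quantitative input is a uniform total-mass bound $\sup_n h_n(\Omega)<\infty$: this follows from testing $h_n$ against a fixed non-negative function $\zeta\in H^{s,2}_{00}(\Omega)$ whose infimum on $\Omega$ is positive (available since $\Omega$ is bounded and Lipschitz), combined with the uniform $H_{00}^{s,2}(\Omega)^{\ast}$-bound provided by Banach--Steinhaus for the weakly convergent sequence.

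\textbf{Key steps.} Introduce the fractional Riesz potentials $u_n,u\in H^{s,2}_{00}(\Omega)$ by $(-\Delta)^s u_n=h_n$ and $(-\Delta)^s u=h$. The fractional maximum principle yields $u_n\geq 0$; weak convergence $h_n\rightharpoonup h$ lifts to $u_n\rightharpoonup u$ in $H^{s,2}_{00}(\Omega)$, and the Rellich--Kondrachov theorem in fractional Sobolev spaces gives $u_n\to u$ in $L^2(\Omega)$. Since $(-\Delta)^s$ is an isomorphism between $H^{s,q}_{00}(\Omega)$ and $H^{-s,q}(\Omega)$, the theorem is equivalent to $u_n\to u$ strongly in $H^{s,q}_{00}(\Omega)$ for every $q<2$. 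Following Brezis I would split $u_n=T_k(u_n)+(u_n-T_k(u_n))$, where $T_k$ is truncation at level $k$. The pointwise monotonicity identity
\[
(u_n(x)-u_n(y))(T_ku_n(x)-T_ku_n(y))\geq (T_ku_n(x)-T_ku_n(y))^2
\]
gives a fractional Stampacchia inequality $[T_ku_n]_{s,2}^2\leq \langle h_n,T_ku_n\rangle\leq k\, h_n(\Omega)\leq Ck$. The bounded piece $T_k(u_n)-T_k(u)$ is then forced to zero in $H^{s,q}_{00}(\Omega)$ by an interpolation argument between its $L^2$-smallness (inherited from $u_n\to u$ in $L^2$) and its $H^{s,2}_{00}$-boundedness. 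The tail $(u_n-T_ku_n)$, supported essentially on the super-level set $\{u_n>k\}$, is handled via a Chebyshev-type bound $|\{u_n>k\}|\leq \|u_n\|_{L^{2^{\ast}_s}}^{2^{\ast}_s}/k^{2^{\ast}_s}$, which tends to zero uniformly in $n$ as $k\to\infty$; the desired $H^{s,q}_{00}(\Omega)$ smallness of the tail is then obtained from a non-local Hölder inequality in the Gagliardo seminorm.

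\textbf{Main obstacle.} The delicate point is the non-local tail estimate. In the classical setting the inequality $\int_{\{u_n>k\}}|\nabla u_n|^q\leq \|\nabla u_n\|_{L^2}^q|\{u_n>k\}|^{1-q/2}$ is immediate because $\nabla$ is a local operator, so the integral localizes to the super-level set and Hölder applies. For the Gagliardo seminorm of $u_n-T_ku_n$ the relevant double integral involves pairs $(x,y)$ with only one point in $\{u_n>k\}$ and the partner potentially far away, so the localization is lost and a straightforward Hölder bound fails. Reconstructing the estimate will require a careful non-local decomposition of $u_n-T_ku_n$ together with an inequality controlling the Gagliardo seminorm on the super-level set in terms of the mass of $h_n$ there --- likely a fractional chain-rule or truncation identity, in the spirit of the tools that the abstract advertises. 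I expect this non-local tail control, quantitative in $k$ and uniform in $n$, to be the principal technical difficulty; once it is in place, combining it with the $L^2$-compactness of $u_n$ and the Stampacchia bound on $T_ku_n$ closes the argument.
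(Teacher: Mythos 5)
Your opening moves match the paper's: representing $h_n$ through $u_n\in H^{s,2}_{00}(\Omega)$ with $h_n[\varphi]=\int_{\R^N}\laps{s}u_n\,\laps{s}\varphi$ (the paper's Proposition~\ref{pr:id}), deducing a uniform total-mass bound for the measures, and reducing the claim to $\|\laps{s}(u_n-u)\|_{L^q(\R^N)}\to 0$. (The word ``isomorphism'' overstates what you need: you only use the bounded map $\laps{2s}\colon H^{s,q}_{00}(\Omega)\to H^{s,q'}_{00}(\Omega)^\ast$, and indeed the paper's remarks section explains that the reverse direction --- $L^q$-regularity for $q\neq 2$ --- is genuinely unknown.) The Stampacchia bound $[T_k u_n]_{s,2}^2\leq k\,h_n(\Omega)$ is also correct for $s\in(0,1)$, since the $W^{s,2}$-Gagliardo seminorm and $\|\laps{s}\cdot\|_{L^2}$ coincide up to a constant.

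The gap you flag, however, is fatal to this route, for two interlocking reasons. First, as you say, the tail term $u_n-T_k u_n$ does not have a fractional Laplacian localized to the super-level set $\{u_n>k\}$, so the classical Chebyshev--H\"older bookkeeping falls apart. Second --- and this you seem not to notice --- when you invoke ``a non-local H\"older inequality in the Gagliardo seminorm'' you have silently switched from the Bessel-potential scale $H^{s,q}$ (in which the theorem is stated and in which $\|\laps{s}\cdot\|_{L^q}$ lives) to the Besov scale $W^{s,q}$. For $q<2$ these are \emph{not} equivalent, and the paper's closing remarks state that the analogous $(W^{s,q})^\ast$-compactness statement is expected to be \emph{false}, because $W^{s,2}$-functions need not lie in $W^{s,q}_{\mathrm{loc}}$ for $q<2$. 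So even a perfect Gagliardo tail estimate would land you in the wrong space; you would need to control $\|\laps{s}(u_n-T_k u_n)\|_{L^q}$ directly, and truncation does not interact well enough with $\laps{s}$ to make that tractable.

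The paper sidesteps truncation entirely. For the interior it writes $\tilde h_{n,\eta}$ via the dual decomposition into $f_n\in L^p$ and $g_n\in H^{s,p}$ (\Cref{pr:dualcharglobal}), observes that positivity makes $\laps{2s}g_n=\mu_n-f_n$ a finite measure plus an $L^p$ function, and then invokes a new limiting Sobolev embedding (\Cref{pr:sob2}) which trades $L^1$-control of $\laps{2s}g_n$ locally plus global $L^p$-control of $\laps{s}g_n$ for a local bound on $\laps{s+\delta}g_n$ in some $L^r$, $\delta>0$. This gain of $\delta$ derivatives is the replacement for the Stampacchia/Chebyshev interplay: Rellich compactness then yields local $L^1$-convergence of $\laps{s}g_n$, and Vitali upgrades it to $L^q$ for any $q<p$. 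The extension to $\partial\Omega$ is a separate argument (\Cref{th:uptobd}) that splits $\R^N$ into a shrinking boundary tube (small measure since $\partial\Omega$ is Lipschitz), an exterior region (handled by Young's inequality since $u_n-u$ vanishes there), and an interior region (handled by the local result via commutator estimates). So the key step where the paper and your proposal diverge is precisely the one you could not fill in; the paper's answer is \Cref{pr:sob2}, not a fractional truncation identity.
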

The notation $\rightharpoonup$ and $\rightarrow$ stands for the weak and strong convergences, respectively. {Let us stress that we work here with the Bessel potential space $H^{s,p}$ which is different from the Besov-space $W^{s,p}$, applications to which we discuss in \Cref{s:remarks}}. {Also, we do not discuss the case $q=1$, because of competing notions on how to properly define it}. A proof to \Cref{th:convergence} will be provided in \Cref{s:th:conv}. The proof presented here contains the one from Brezis in \cite{B81}. {As in Brezis' result the key idea is still to use the fact that non-negative distributions are measures, \Cref{measurefact}}. In addition, as a consequence of Theorem \ref{th:convergence}, we prove the second main result of the paper, a fractional version of the Boccardo/Murat result
\begin{theorem}\label{th:Moscoconvergence}
Let $\Omega \subset \R^N$ be any open bounded set with Lipschitz boundary and let $\psi_n \rightharpoonup \psi$  in $H^{s,q}_{00}(\Omega)$ for {a} $q>2$. Then 
\begin{equation*}
 K_s(\psi_n) := \left \{v \in H^{s,2}_{00}(\Omega):  v \geq \psi_n \quad \text{a.e.} \right \},
\end{equation*}
converges in the sense of Mosco to
\begin{equation*}
 K_s(\psi) := \left \{v \in H^{s,2}_{00}(\Omega):  v \geq \psi \quad \text{a.e.} \right \}.
\end{equation*}
\end{theorem}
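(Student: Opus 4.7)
The strategy is to verify the two defining conditions of Mosco convergence of closed convex sets. The liminf (weak-limit) part follows by compactness, while the limsup (recovery sequence) part is where \Cref{th:convergence} enters.

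\emph{Liminf part.} Given any $v_{n_k} \in K_s(\psi_{n_k})$ with $v_{n_k} \rightharpoonup v$ in $H^{s,2}_{00}(\Omega)$, the compact embedding of $H^{s,2}_{00}(\Omega)$ into $L^2(\Omega)$, together with the same compact embedding applied to the bounded sequence $\{\psi_n\} \subset H^{s,q}_{00}(\Omega)$, yields along a common subsequence a.e.\ pointwise convergence of both $v_{n_k}$ and $\psi_{n_k}$. Passing $v_{n_k} \geq \psi_{n_k}$ to the a.e.\ limit then gives $v \geq \psi$, so $v \in K_s(\psi)$.

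\emph{Recovery by projection.} For $v \in K_s(\psi)$, the recovery sequence will be $v_n := P_n v$, the $H^{s,2}_{00}$-orthogonal projection of $v$ onto the closed convex non-empty set $K_s(\psi_n)$ (nonempty since $\psi_n \in H^{s,q}_{00} \subset H^{s,2}_{00}$ on the bounded $\Omega$). Let $\Rz: H^{s,2}_{00}(\Omega) \to H^{-s,2}(\Omega)$ denote the Riesz isomorphism and set $\mu_n := \Rz(v_n - v)$. From the variational characterization
\[
\langle v_n - v,\, w - v_n\rangle_{H^{s,2}_{00}} \geq 0 \qquad (\forall\, w \in K_s(\psi_n)),
\]
testing with $w = v_n + \phi$ for $\phi \geq 0$ in $H^{s,2}_{00}$ delivers $\mu_n \geq 0$ as a distribution in $H^{-s,2}$, while testing with $w = \psi_n$ in conjunction with the pointwise sign $v_n - \psi_n \geq 0$ forces the complementarity identity $\langle \mu_n,\, v_n - \psi_n\rangle = 0$. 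Combined with the isometry $\langle \mu_n,\, v_n - v\rangle = \|v_n - v\|^2_{H^{s,2}_{00}}$, this produces the key equation
\[
\|v_n - v\|^2_{H^{s,2}_{00}} = \langle \mu_n,\, \psi_n - v\rangle.
\]
Using $\psi_n$ itself as an admissible competitor bounds $\|v_n - v\|_{H^{s,2}_{00}} \leq \|\psi_n - v\|_{H^{s,2}_{00}}$ uniformly in $n$, so $\{\mu_n\}$ is bounded in $H^{-s,2}(\Omega)$ and a subsequence satisfies $\mu_n \rightharpoonup \mu \geq 0$.

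\emph{Passing to the limit via \Cref{th:convergence}.} With the choice $q' := q/(q-1) \in (1,2)$, \Cref{th:convergence} promotes the weak convergence to strong convergence $\mu_n \to \mu$ in $H^{-s,q'}(\Omega) = H^{s,q}_{00}(\Omega)^\ast$. Pairing this strong convergence against the weak convergence $\psi_n - v \rightharpoonup \psi - v$ in $H^{s,q}_{00}$ yields
\[
\|v_n - v\|^2_{H^{s,2}_{00}} = \langle \mu_n,\, \psi_n - v\rangle \longrightarrow \langle \mu,\, \psi - v\rangle.
\]
Because $\mu$ is a nonnegative measure (the structural fact central to the proof of \Cref{th:convergence}) and $\psi - v \leq 0$ a.e., the right-hand side is nonpositive, forcing $v_n \to v$ strongly in $H^{s,2}_{00}$; a standard subsequence argument then gives convergence of the full sequence. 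The main technical hurdle is precisely this concluding sign step: one must rigorously interpret $\langle \mu,\, \psi - v\rangle$ as the integral of $\psi - v$ against the measure $\mu$ and invoke a fractional capacity / quasi-continuous representative framework, so that the a.e.\ sign of $\psi - v$ transfers to sign information in the capacity sense actually seen by $\mu$.
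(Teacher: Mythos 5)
Your recovery sequence $v_n := P_n v$ is, up to the Riesz isomorphism, exactly the one the paper uses: setting $f := \laps{2s} v$ in \Cref{la:existunique}, the solution of the variational inequality with obstacle $\psi_n$ is precisely the $H^{s,2}_{00}$-projection of $v$ onto $K_s(\psi_n)$. Where you diverge from the paper is the argument that $v_n \to v$ strongly. Your route is the identity $\|v_n-v\|^2_{H^{s,2}_{00}} = \langle \mu_n,\, \psi_n - v\rangle$ with $\mu_n := \Rz(v_n-v) \ge 0$, followed by \Cref{th:convergence} to pass the right-hand side to the limit, followed by the sign observation $\langle \mu,\, \psi - v\rangle \le 0$.

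Two remarks on the execution. A minor one: $\psi_n - v$ is not in $H^{s,q}_{00}(\Omega)$ (only $\psi_n$ is; $v \in K_s(\psi)$ is merely $H^{s,2}_{00}$), so you cannot apply strong--weak duality in $H^{s,q}_{00}(\Omega)^\ast \times H^{s,q}_{00}(\Omega)$ to $\langle \mu_n, \psi_n - v\rangle$ as a single term. You should split it as $\langle \mu_n, \psi_n\rangle - \langle \mu_n, v\rangle$ and handle the first by the strong convergence from \Cref{th:convergence} and the second by the weak $H^{-s,2}$ convergence of $\mu_n$ against the fixed $v$. This is easily repaired. (Similarly, the complementarity $\langle \mu_n, v_n - \psi_n\rangle = 0$ is better obtained by testing the projection characterization with $w = 2v_n - \psi_n \in K_s(\psi_n)$ rather than via the sign of $\mu_n$, which otherwise already triggers the issue below.)

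The substantive gap is the one you flag yourself: the final step $\langle \mu, \psi - v\rangle \le 0$ requires that a nonnegative distribution $\mu \in H^{s,2}_{00}(\Omega)^\ast$ pairs nonnegatively against every a.e.\ nonnegative element of $H^{s,2}_{00}(\Omega)$. Nothing in the paper supplies this: \Cref{measurefact} only gives that $\mu$ is a Radon measure, and \Cref{th:density} gives unconstrained density of $C_c^\infty(\Omega)$ in $H^{s,2}_{00}(\Omega)$. Closing this requires either a positive-cone density result (that $\{\varphi \in C_c^\infty(\Omega): \varphi \ge 0\}$ is dense in $\{g \in H^{s,2}_{00}(\Omega): g \ge 0 \text{ a.e.}\}$, via a normal-contraction/truncation argument), or the fractional-capacity and quasi-continuity theory for the corresponding Dirichlet form. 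Both are true, but both are external to the paper. The paper's proof of \Cref{th1} takes a different route precisely to sidestep this: rather than pairing a measure with a sign-definite Sobolev function, it tests the variational inequality for $u_n$ with the shifted competitor $\psi_n + \tilde u - \psi \in K_s(\psi_n)$, so that the problematic pairing is replaced by the cross term $\int \laps{s}(u_n - \tilde u)\,\laps{s}(\psi_n - \psi)$, which is killed by H\"older duality $L^{q'}\times L^q$, using the strong $L^{q'}(\R^N)$-convergence of $\laps{s}(u_n - \tilde u)$ furnished by \Cref{th:convergence} and the $L^q$-boundedness of $\laps{s}(\psi_n-\psi)$. Your argument is conceptually slicker, but to be complete as written it needs the missing approximation lemma for the positive cone; the paper's argument stays entirely inside elementary functional analysis.
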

We delay the introduction of the concept of Mosco convergence until Section \ref{sec:Mosco}.

The paper is organized as follows: In \Cref{s:prelims}, we introduce the relevant notation and state some well-known properties of the Riesz potential and its relationship with the fractional Laplacian. We also state a critical density result. Our main work starts from \Cref{s:sob}, where we provide a crucial Sobolev embedding type result in \Cref{pr:sob2}, followed by a H\"older type inequality in \Cref{lem:Hineq}. \Cref{s:char} is devoted the characterization of the dual spaces of fractional order Sobolev spaces. The proof of our main result is given \Cref{s:th:conv}, where we divide the task in two parts: first we focus on the interior and then extend the result to the boundary. Finally, we conclude the paper by considering an application of our main result {(\Cref{sec:Mosco}) to convergence of convex sets and quasi-variational inequalities.}

\section{Notation and Preliminaries}\label{s:prelims}
In this section we establish relevant notation and assumptions considered throughout the paper. In particular, we assume that 
\begin{equation}\label{eq:boundedomega}
\Omega \quad \text{ is an open and bounded subset of } \mathbb{R}^N,	
\end{equation}
with Lipschitz boundary.

For two open sets $\Omega_1, \Omega_2$ we say that $\Omega_1$ is compactly contained in $\Omega_2$, in symbols $\Omega_1 \subsubset \Omega_2$, if $\overline{\Omega}_1$ is compact and $\overline{\Omega}_1 \subset \Omega_2$.

{For the ease of presentation we follow the $\aleq$, $\ageq$-notation.} Namely, for two nonnegative {expressions} $A,B \geq 0$ we write $A \aleq B$ if there exists a \emph{non-essential} constant $C > 0$ such that $A \leq C\, B$. The value of the constant $C$ might change at each occurrence. We write $A \aeq B$ if $A \aleq B$ and $B \aleq A$. 

We recall the definition of the $s$-Laplacian $\laps{s}$ in $\R^N$. Denoting $\mathcal{F}$ the Fourier transform, $\laps{s} f$ is defined for functions $f \in C_c^\infty(\R^N)$ via
\[
 \mathcal{F}(\laps{s} f)(\xi) = c |\xi|^s \mathcal{F} f(\xi), \quad \xi \in \R^N.
\]
The constant $c$ can be explicitly computed depending on $s$ and $N$. It is determined so that $\laps{2} = -\lap$ where $\lap$ is the usual Laplacian. The constant $c$ plays however no role in our arguments, cf. \cite{DNPV12}.

The inverse of the fractional Laplacian is defined as the Riesz potential  
\begin{equation*}
	\lapms{s} := (-\lap)^{-\frac{s}{2}},
\end{equation*}
and for $s = 2$ this is usually called the Newton potential. Further, we observe that
\[
 \mathcal{F}(\lapms{s} f)(\xi) = (c |\xi|^s)^{-1} \mathcal{F} f(\xi), \quad \xi \in \R^N.
\]

Both the fractional Laplacian and Riesz potential have a integral representation. For $s \in (0,N)$,
\[
 \lapms{s} f(x) = c\int_{\R^N} |x-y|^{s-N}\, f(y)\, dy. 
\]
Moreover, for $s \in (0,1)$, 
\[
 \laps{s} f(x) = c\int_{\R^N} |x-y|^{-s-N}\, (f(y)-f(x))\, dy.
\]
The latter representation still holds when $s \in (1,2)$ but in a principal value (p.v.) sense. Another representation for $s \in (0,2)$ is given by
\[
 \laps{s} f(x) = \frac{1}{2} c\int_{\R^N} |h|^{-s-N}\, (f(x+h)+f(x-h)-2f(x))\, dh .
\]
See \cite{DNPV12} for an overview, \cite{S02} for more detailed and extended results, and \cite[Section 6.1.1]{G14m} for the harmonic analysis aspects of these operators.

We collect a few basic properties that we will use throughout the paper.
\begin{lemma}\label{la:basicprops}
Let $f,g \in C_c^\infty(\R^N)$. Recalling that for $s > 0$ we have $(-\lap)^{-\frac{s}{2}} = \lapms{s}$ and defining $\laps{0} := id$ we have
\begin{enumerate}[$(a)$]
\item For any $s \in (-N,N)$ an integration-by-parts formula for Riesz potential and fractional Laplacian holds, namely
\[
\int_{\R^N} \laps{s} f\, g =  \int_{\R^N} f\, \laps{s} g \, .
\]
\item $\laps{s} \laps{t} f = \laps{s+t} f$ for all $s,t \in (-N,N)$ with $s+t \in (-N,N)$.
\item For $x \neq 0$ we have $\laps{s} |x|^{t-N} = c |x|^{t-s-N}$ for $s,t \in (0,N)$ with $s \neq t$.
\item For $f,g \in C_c^\infty(\R^N)$ and $s \in (0,2)$ we have the product rule
\begin{align*}
 \laps{s} (fg)(x) &= (\laps{s} f)(x)\, g(x)+f(x)\, (\laps{s} g(x)) \\
 		&\quad +c\int_{\R^N} \frac{(f(x)-f(y))(g(x)-g(y))}{|x-y|^{N+s}}\, dy \, .
\end{align*}

\end{enumerate}
\end{lemma}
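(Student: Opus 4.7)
The plan is to derive all four items from the Fourier-multiplier description $\mathcal{F}(\laps{s}f)(\xi)=c|\xi|^{s}\mathcal{F}f(\xi)$ together with the kernel formulas already stated in the excerpt. For part $(a)$, the multiplier $c|\xi|^{s}$ is real and even, so Plancherel immediately yields
\[
\int_{\R^N}\laps{s}f\, g\, dx=\int_{\R^N}c|\xi|^{s}\mathcal{F}f(\xi)\,\mathcal{F}g(-\xi)\, d\xi=\int_{\R^N}f\, \laps{s}g\, dx,
\]
and for $s<0$ one may alternatively check the identity directly from the symmetric convolution kernel $|x-y|^{-s-N}$ via Fubini. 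Part $(b)$ is then the multiplication of symbols $c|\xi|^{s}\cdot c|\xi|^{t}=c^{2}|\xi|^{s+t}$; the hypothesis $s,t,s+t\in(-N,N)$ is precisely what is needed to keep each symbol locally integrable at the origin and hence a tempered distribution.

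Part $(c)$ is a scaling/homogeneity computation. Since $x\mapsto|x|^{t-N}$ has Fourier transform proportional to $|\xi|^{-t}$ as a tempered distribution, applying $\laps{s}$ multiplies this by $c|\xi|^{s}$ to give the symbol $|\xi|^{s-t}$, whose inverse Fourier transform is a multiple of $|x|^{t-s-N}$ (well-defined away from the origin provided $s\neq t$). Equivalently, $\laps{s}|x|^{t-N}$ is rotationally invariant and homogeneous of degree $t-s-N$, hence a scalar multiple of $|x|^{t-s-N}$; evaluating at $|x|=1$ pins down the constant.

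The main effort is in part $(d)$. Starting from the symmetric kernel representation
\[
\laps{s}(fg)(x)=\tfrac{1}{2}c\int_{\R^N}|h|^{-N-s}\bigl((fg)(x+h)+(fg)(x-h)-2(fg)(x)\bigr)\, dh,
\]
I would expand each second difference by the algebraic identity
\[
f(y)g(y)-f(x)g(x)=f(x)\bigl(g(y)-g(x)\bigr)+g(x)\bigl(f(y)-f(x)\bigr)+\bigl(f(y)-f(x)\bigr)\bigl(g(y)-g(x)\bigr)
\]
with $y=x\pm h$, sum the two contributions, and recognize the linear-in-difference parts, via the same kernel formula applied now to $f$ and $g$ separately, as $f(x)\laps{s}g(x)+g(x)\laps{s}f(x)$. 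The quadratic remainder reassembles, after the change of variables $y=x\pm h$ and using $|h|=|x-y|$, into $c\int(f(x)-f(y))(g(x)-g(y))|x-y|^{-N-s}\, dy$. The point I expect to be the main obstacle is the principal-value justification when $s\in[1,2)$: individually each of the three summands is only p.v.\ integrable near $y=x$, but the quadratic remainder is \emph{absolutely} convergent thanks to $(f(y)-f(x))(g(y)-g(x))=O(|x-y|^{2})$, which dominates $|x-y|^{-N-s}$ for every $s<2$. Careful bookkeeping of the cancellations between the $+h$ and $-h$ contributions, together with the rapid decay of $f,g\in C_c^\infty$ at infinity, should close the argument.
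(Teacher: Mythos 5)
Your proposal matches the paper's proof in all four parts: Plancherel for $(a)$, symbol multiplication for $(b)$, the homogeneity/rotation-invariance argument for $(c)$, and the quadratic algebraic decomposition of $(fg)(y)-(fg)(x)$ applied inside the kernel representation for $(d)$. The only difference is cosmetic — you work from the symmetric second-difference representation and explicitly note the absolute convergence of the quadratic remainder, whereas the paper simply states the algebraic identity and cites the potential representation without spelling out the $s\in[1,2)$ integrability bookkeeping; this extra care is welcome but does not change the route.
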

\begin{proof}
This follows, e.g., from the definition of the operators via the Fourier transform or the potential represenation.
\begin{enumerate}[$(a)$]
 \item By Plancherel's theorem
 \[
 \begin{split}
\int_{\R^N} {\laps{s} f}\, g =&  \int_{\R^N} \mathcal{F}(\laps{s} f)(\xi)\, \overline{\mathcal{F}(g)(\xi)}\, d\xi\\
=&  c\int_{\R^N} |\xi|^s \mathcal{F}(f)(\xi)\, \overline{\mathcal{F}(g)(\xi)}\, d\xi\\
=&  c\int_{\R^N} \mathcal{F}(f)(\xi)\, \overline{|\xi|^s \, \mathcal{F}(g)(\xi)}\, d\xi\\
=&  c\int_{\R^N} \mathcal{F}(f)(\xi)\, \overline{ \, \mathcal{F}(\laps{s} g)(\xi)}\, d\xi\\
=&  \int_{\R^N} f\, \laps{s} g \, .
\end{split}
 \]
\item We have \[
\begin{split}
    \mathcal{F}(\laps{s} \laps{t} f)(\xi) =& c|\xi|^s \mathcal{F}(\laps{t} f)(\xi) = 
               c|\xi|^s |\xi|^t \mathcal{F}(f)(\xi) \\
               =&c|\xi|^{s+t} \mathcal{F}(f)(\xi) =c\mathcal{F}(\laps{s+t}f)(\xi).
               \end{split}
              \]
Inverting the Fourier transform on both sides we obtain the claim.
\item One can check that the Fourier-transform of a $\sigma$-homogeneous function ($\sigma < N$, $\sigma \neq 0$) is $-N-\sigma$-homogeneous. Moreover the Fourier transform preserves radial functions. From this we conclude that $\mathcal{F} (|\cdot|^{t-N}) (\xi) = c|\xi|^{-t}$. Consequently, $\mathcal{F} (\laps{s}  |\cdot|^{t-N})(\xi) = |\xi|^s |\xi|^{-t} = |\xi|^{s-t}$. If $s \neq t$ then we can invert the Fourier transform and have that $\laps{s} |\cdot|^{t-N} = c |\cdot|^{t-s-N}$.
\item This follows from the potential representation of $\laps{s}$, observing that 
\[
\begin{split}
 f(x)g(x)-f(y)g(y) &= f(x)\, (g(x)-g(y))+g(y)\, (f(x)-f(y))\\
 &= f(x)\, (g(x)-g(y))+g(x) (f(x)-f(y)) \\
 &\quad- (g(x)-g(y))\, (f(x)-f(y))
 \end{split}
\]
\end{enumerate}
The proof of \Cref{la:basicprops} is complete.
\end{proof}

Next we recall the fractional Sobolev space $H^{s,p}(\R^N)$, also referred to as Besov space (it corresponds to the Besov space $B^s_{p,p}(\R^N)$, \cite{RS96}). 

For $s \in (0,\infty)$, $p \in (1,\infty)$ denote
\[
 [f]_{H^{s,p}(\R^N)} = \|\laps{s} f\|_{L^p(\R^N)},
\]
\[
 \|f\|_{H^{s,p}(\R^N)} := \|f\|_{L^p(\R^N)} + [f]_{H^{s,p}(\R^N)}.
\]
Then the space $H^{s,p}(\R^N)$ is defined as
\[
 H^{s,p}(\R^N) = \left \{f \in L^p(\R^N):\quad \|f\|_{H^{s,p}(\R^N)} < \infty \right \}.
\]
We also need to define the fractional Sobolev space on open sets $\Omega \subset \R^N$ with zero boundary data on $\partial \Omega$. Since $\laps{s}$ acts on functions defined in all of $\R^N$ one needs to assume that the functions vanish on the complement of $\Omega$. More precisely, for $s \in (0,N)$ we define
\begin{equation}\label{eq:Hsp00}
 H_{00}^{s,p}(\Omega) := \left \{ f \in L^p(\R^N), \quad  \laps{s} f \in L^p(\R^N), \quad f \equiv 0 \text{ in $\Omega^c$} \right \}
\end{equation}
endowed with the norm ${\|f\|_{H^{s,p}_{00}(\Omega)} := \|f\|_{H^{s,p}(\R^N)} =} \|f\|_{L^p(\R^N)} + \|\laps{s} f\|_{L^p(\R^N)}$.
It is useful to observe that this definition makes sense for \emph{any} open set $\Omega \subset \R^N$, there is no requirement on the regularity of its boundary $\partial \Omega$.

For the topological dual $H^{s,p}_{00}(\Omega)^{\ast}$, throughout the paper, we use the following notation 
\begin{equation*}
	H^{-s,p'}(\Omega) = H^{s,p}_{00}(\Omega)^{\ast},
\end{equation*}
where $p'$ is the H\"{o}lder conjugate of $p$. Throughout the paper, and abusing notation, we denote the duality pairing between elements $h\in H^{-s,p'}(\Omega)$ and $u\in H^{s,p}_{00}(\Omega)$ by {$h[u]$}.

{For a given Banach space $X$, the strong and weak convergences of a sequence $\{z_n\}$ in $X$ to some $z\in X$ are denoted ``$z_n\to z$'' and ``$z_n\rightharpoonup z$'', respectively.}

For $s=1$ and $p \in (1,\infty)$ the space $H^{1,p}(\R^N)$ coincides with the usual Sobolev space. If $\Omega \subset \R^N$ with Lipschitz boundary $\partial \Omega$ then $H^{1,p}_{00}(\Omega)$ coincides with the usual Sobolev space of functions with trace zero on $\partial \Omega$. Clearly, $C_c^\infty(\Omega) \subset H^{s,p}_{00}(\Omega)$ for any $s \in (0,1]$, $p \in (1,\infty)$. 
\begin{theorem}\label{th:density}
Let $s \in (0,1]$, $p \in (1,\infty)$ and $\Omega \subset \R^N$ be any open set. Then $C_c^\infty(\Omega)$ is dense in $H^{s,p}_{00}(\Omega)$.
\end{theorem}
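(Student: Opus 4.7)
I would follow the standard truncation–mollification scheme, executed in three stages: (i) a far-field cutoff to bounded support (relevant when $\Omega$ is unbounded), (ii) a distance-function cutoff that pushes the support strictly inside $\Omega$, and (iii) mollification. The $L^p$-convergence of the approximating sequences is always routine; the crucial point is controlling $\laps{s}$ applied to the cutoff products, for which I would lean heavily on the product identity in \Cref{la:basicprops}(d).

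For stage (i), pick $\eta_R\in C_c^\infty(\R^N)$ with $\eta_R\equiv 1$ on $B_R$ and $\supp \eta_R\subset B_{2R}$. Then $\eta_R f$ still vanishes on $\Omega^c$, $\eta_R f\to f$ in $L^p$ by dominated convergence, and the product rule gives
\[
\laps{s}(\eta_R f) = (\laps{s}\eta_R)\, f + \eta_R\, \laps{s} f + c\int_{\R^N}\frac{(\eta_R(x)-\eta_R(y))(f(x)-f(y))}{|x-y|^{N+s}}\,dy.
\]
The middle term converges to $\laps{s}f$ in $L^p$ by dominated convergence, while the first and the commutator integral tend to $0$ using decay of $\eta_R$ and $\laps{s}\eta_R$ on any fixed compact set.

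Stage (ii) is the heart of the argument; we may assume $f$ has bounded support contained in $\overline\Omega$. Let $d(x)=\dist(x,\Omega^c)$, fix a smooth monotone $\phi$ with $\phi\equiv 0$ near $0$ and $\phi\equiv 1$ on $[1,\infty)$, and set $\zeta_k(x)=\phi(k\,d(x))$ (mollifying $d$ first, if smoothness of $\zeta_k$ is needed). Then $\supp (\zeta_k f)\subsubset \Omega$, $\zeta_k\to \bbbone_{\{d>0\}}$ pointwise, and $|\nabla\zeta_k|\aleq k$. Applying the product rule once more, convergence $\zeta_k f\to f$ in $H^{s,p}$ reduces to the vanishing in $L^p(\R^N)$ of each of $(\laps{s}\zeta_k)\,f$, $(1-\zeta_k)\laps{s}f$, and the commutator integral
\[
 \int_{\R^N}\frac{(\zeta_k(x)-\zeta_k(y))(f(x)-f(y))}{|x-y|^{N+s}}\,dy.
\]
For the commutator I would split the integration at $|x-y|\sim 1/k$ and use $|\zeta_k(x)-\zeta_k(y)|\aleq \min(1,k|x-y|)$ together with $f\equiv 0$ on $\Omega^c$ to reduce matters to a Hardy-type estimate near $\partial\Omega$ for elements of $H^{s,p}_{00}(\Omega)$.

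Stage (iii) is classical: once $\zeta_k f$ has compact support in $\Omega$, the mollification $(\zeta_k f)*\rho_{\varepsilon_k}$ with $\varepsilon_k$ smaller than the distance from $\supp (\zeta_k f)$ to $\Omega^c$ yields a $C_c^\infty(\Omega)$ element, and the convergence in $H^{s,p}$ follows because convolution commutes with $\laps{s}$ as Fourier multipliers and is a contraction on $L^p$. A diagonal extraction then produces the sought approximating sequence. The principal obstacle is the commutator estimate in stage (ii); the hypothesis $s\in(0,1]$ enters there essentially, because $\zeta_k$ is then Lipschitz with a quantitative bound and the kernel $|x-y|^{-N-s}$ integrates against bounded differences in the appropriate range.
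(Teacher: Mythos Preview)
The paper does not supply its own argument for \Cref{th:density}; it explicitly remarks that the statement ``is relatively easy to prove for $\Omega$ under the assumption that $\partial\Omega$ is sufficiently regular'' but ``is more involved for generic open set $\Omega$, even if $s=1$,'' and then defers to \cite[Theorem~10.1.1]{AH96} (attributed to Netrusov). Your three-stage scheme is exactly the ``easy'' case the paper alludes to: stages~(i) and~(iii) are unproblematic, and stage~(ii) works if $\partial\Omega$ is, say, Lipschitz. The difficulty is that the theorem is asserted for \emph{any} open set, and your plan for stage~(ii) does not survive that generality.

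The concrete gap is your appeal to ``a Hardy-type estimate near $\partial\Omega$ for elements of $H^{s,p}_{00}(\Omega)$.'' A bound of the form $\|f/d^{s}\|_{L^p(\Omega)}\aleq\|f\|_{H^{s,p}(\R^N)}$ for $f$ vanishing on $\Omega^c$ is exactly what is needed to control both $(\laps{s}\zeta_k)\,f$ and the near-diagonal piece of the commutator (since $|\laps{s}\zeta_k|$ is of order $k^s\sim d^{-s}$ on the transition layer), but such fractional Hardy inequalities are known only under geometric hypotheses on $\Omega^c$ (thickness/plumpness, Aikawa/Assouad dimension conditions, etc.) and fail for arbitrary open sets. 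Equivalently, the Lipschitz bound $|\zeta_k(x)-\zeta_k(y)|\aleq k|x-y|$ gives no smallness by itself; one still needs quantitative decay of $f$ toward $\partial\Omega$, which is not encoded in mere membership in $H^{s,p}_{00}(\Omega)$ when $\partial\Omega$ is irregular. The proof in \cite{AH96} circumvents this by capacity-theoretic arguments (truncation against quasicontinuous representatives and removal of sets of small $(s,p)$-capacity rather than small Euclidean tubular neighborhoods), which is precisely the ``more involved'' route the paper is pointing to. If you restrict to the Lipschitz $\Omega$ actually used in the paper's main results, your outline would go through; for the statement as written, it does not.
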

While this statement is relatively easy to prove for $\Omega$ under the assumption that $\partial \Omega$ is sufficiently regular, it is more involved for generic open set $\Omega$, even if $s=1$. \Cref{th:density} is proven in \cite[Theorem 10.1.1]{AH96}; they attribute the proof to \cite{N92}.

\section{Sobolev inequalities}\label{s:sob}
In this section we state embedding and compactness results which are crucial to the remainder of the paper. 
We shall exploit a limiting version of Sobolev embedding stated in \Cref{pr:sob2}.

We begin by stating a Sobolev inequality.
\begin{lemma}[Sobolev inequality]\label{la:sobolev}
Assume that $\laps{s} f \in L^p(\R^N)$ for $s \in (0,N)$ and $p \in (1,\infty)$. Then for any $t \in [0,s]$, if $s-t-\frac{N}{p} \in (0,1)$ then
\[
 \|\laps{t} f\|_{L^{\frac{Np}{N-(s-t)p}}(\R^N)} \leq C(t,p)\, \|\laps{s} f\|_{L^p(\R^N)}.
\]
Moreover for $g \in L^p(\R^N)$, $s \in (0,N)$ with $\frac{Np}{N-sp} \in (1,\infty)$,
\[
 \|\lapms{s} g\|_{L^{\frac{Np}{N-sp}}(\R^N)} \leq C(s,p)\, \|g\|_{L^p(\R^N)}.
\]
\end{lemma}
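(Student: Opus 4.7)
The plan is to reduce the first estimate to the second, which is the classical Hardy--Littlewood--Sobolev (HLS) inequality for the Riesz potential. The HLS inequality is a well-known harmonic analysis result: under the condition $sp<N$ it yields the boundedness
\[
\lapms{s}: L^p(\R^N)\to L^{\frac{Np}{N-sp}}(\R^N),
\]
and is proved by splitting the convolution kernel $|x|^{s-N}$ into a near and a far piece, bounding the near piece by the Hardy--Littlewood maximal function and the far piece pointwise, and concluding via a weak-type/Marcinkiewicz interpolation argument (see, e.g., \cite{G14m} or \cite{S02}). For the second assertion of the lemma I would simply invoke this classical fact, since the paper already refers to these sources in \Cref{s:prelims}.

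For the first assertion I would use the semigroup property of the fractional Laplacian recorded in \Cref{la:basicprops}$(b)$. Writing
\[
\laps{t} f \;=\; \laps{t-s}\laps{s} f \;=\; \lapms{s-t}\bigl(\laps{s} f\bigr),
\]
one sets $g := \laps{s} f \in L^p(\R^N)$ and applies the HLS inequality with $s$ replaced by $s-t$ (the case $t=s$ is trivial), to obtain
\[
\|\laps{t} f\|_{L^{\frac{Np}{N-(s-t)p}}(\R^N)} \;=\; \|\lapms{s-t} g\|_{L^{\frac{Np}{N-(s-t)p}}(\R^N)} \;\aleq\; \|g\|_{L^p(\R^N)} \;=\; \|\laps{s} f\|_{L^p(\R^N)},
\]
which is exactly the claimed estimate whenever the target exponent is admissible, i.e. whenever $(s-t)p<N$.

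The only minor subtlety I foresee is that \Cref{la:basicprops}$(b)$ is stated for test functions $f\in C_c^\infty(\R^N)$, whereas in the lemma we only assume $\laps{s} f\in L^p(\R^N)$. This is easy to handle: one either argues by density on the Fourier side (where the composition $|\xi|^{t-s}|\xi|^s=|\xi|^t$ is immediate and stable in the relevant norms), or runs the whole argument on Schwartz functions first and extends by approximation. Beyond that, I do not expect any real obstacle; the proof is essentially a two-line manipulation built on top of the classical HLS inequality and the composition law for the Riesz potentials.
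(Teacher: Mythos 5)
Your proposal is correct and coincides with the paper's approach: the paper gives no proof of its own but simply cites the Hardy--Littlewood--Sobolev theorem (\cite[Theorem 6.1.3]{G14m}), which is precisely the second assertion, and your reduction of the first assertion to the second via the identity $\laps{t} f = \lapms{s-t}\bigl(\laps{s} f\bigr)$ (the composition law of \Cref{la:basicprops}$(b)$, extended from $C_c^\infty$ by a standard Fourier-side density argument) is exactly the intended deduction. One small remark, independent of your argument: as printed, the paper's hypothesis ``$s-t-\tfrac{N}{p}\in(0,1)$'' forces $(s-t)p>N$, which would make the target exponent $\tfrac{Np}{N-(s-t)p}$ negative; the condition actually used, and the one you correctly identify, is the HLS admissibility condition $(s-t)p<N$, so this appears to be a typo in the statement rather than a gap in your proof.
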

\Cref{la:sobolev} follows e.g. from \cite[Theorem 6.1.3.]{G14m}.

\begin{proposition}[Rellich]\label{pr:rellich}
The following results hold:
\begin{enumerate}
 \item Assume that for some $1 < q \leq p < \infty$ and some $t > 0$
\[
 \sup_{k} \|\laps{t} f_k\|_{L^q(\Omega)} + \|f_k\|_{L^p(\R^N)} < \infty
\]
then $f_k$ converges (up to subsequence) strongly to some $f$ in $L^q(\Omega')$ for any $\Omega' \subset \subset \Omega$.
\item Assume that $f_k \in L^p(\R^N)$ with
\[
 \sup_{k} \|f_k\|_{L^p(\R^N)} < \infty.
\]
Set $g_k := \lapms{s} f_k$ where $\lapms{s}$ is the Riesz potential. Then  
up to a subsequence, $g_k$ is a Cauchy sequence in $L^p(\Omega)$.
\end{enumerate}
\end{proposition}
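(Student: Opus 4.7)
My plan is to prove Part 1 first as the main analytic work and then deduce Part 2 from it via Sobolev embedding on a slightly enlarged domain. For Part 1, I would localize: fix a cutoff $\eta \in C_c^\infty(\Omega)$ with $\eta \equiv 1$ on a neighborhood of $\Omega'$ and show that $\{\eta f_k\}$ is uniformly bounded in $H^{t,q}(\R^N)$ while being uniformly compactly supported in a fixed compact set of $\Omega$. A standard fractional Rellich--Kondrachov compactness argument for compactly supported functions then yields a subsequence converging strongly in $L^q(\R^N)$, and restricting to $\Omega'$ (where $\eta\equiv 1$) gives the claim for $f_k$.

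The key estimate is therefore the uniform bound on $\|\laps{t}(\eta f_k)\|_{L^q(\R^N)}$. For $t\in(0,2)$, I would invoke the product rule in \Cref{la:basicprops}(d), decomposing $\laps{t}(\eta f_k)$ into three pieces: (i) $\eta\,\laps{t} f_k$, supported in $\supp\eta \subset \Omega$ and directly controlled by the hypothesis $\|\laps{t} f_k\|_{L^q(\Omega)}$; (ii) $(\laps{t}\eta)\,f_k$, controlled in $L^q(\R^N)$ by H\"older since $\laps{t}\eta$ is smooth and decays like $|x|^{-N-t}$ at infinity (from the potential representation), while $f_k$ is uniformly bounded in $L^p(\R^N)$; and (iii) a nonlinear commutator $\int(\eta(x)-\eta(y))(f_k(x)-f_k(y))|x-y|^{-N-t}\,dy$, which I would split at $|x-y|=1$ using $|\eta(x)-\eta(y)| \aleq \min(1,|x-y|)$: the far part is convolution with an integrable kernel handled by Young's inequality, while the near part is dominated by a Riesz-potential-type operator of order $1-t$ applied to $f_k$ (effectively on a bounded set, thanks to the cutoff), whose $L^q$-norm is controlled via \Cref{la:sobolev} and H\"older. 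For $t\geq 2$, I would iterate via the composition $\laps{t}=\laps{1}\laps{t-1}$ from \Cref{la:basicprops}(b), reducing repeatedly to the case $t\in(0,2)$.

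For Part 2, I would pick a larger bounded open set $\tilde\Omega \supset\supset \Omega$, apply \Cref{la:sobolev} to obtain a uniform bound on $g_k = \lapms{s} f_k$ in $L^{p^\ast}(\R^N)$ with $p^\ast = Np/(N-sp)$ in the subcritical regime $sp < N$ (the remaining regimes are easier, using Morrey's embedding and Arzel\`a--Ascoli when $sp > N$, and a standard $L^r$-gain for any finite $r$ when $sp=N$), and observe that $\laps{s} g_k = f_k$ by \Cref{la:basicprops}(b) is uniformly bounded in $L^p(\tilde\Omega)$. Applying Part 1 with parameters $t=s$, $q=p$, the ambient $L^p$-bound replaced by the $L^{p^\ast}$-bound on $g_k$, and domains $(\tilde\Omega,\Omega)$ in place of $(\Omega,\Omega')$ then yields strong $L^p(\Omega)$ convergence of a subsequence, which is exactly the Cauchy claim.

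The main obstacle is the commutator term (iii) above: its $L^q$-bound requires a careful near/far splitting and a delicate placement of the resulting Riesz-type integrals into the correct Lebesgue space via \Cref{la:sobolev}, and it is where essentially all the fractional analysis of Part 1 is concentrated. Everything else reduces to standard embeddings and bookkeeping of exponents.
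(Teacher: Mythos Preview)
The paper does not actually prove \Cref{pr:rellich}; it only remarks that the result follows either by adapting the classical Rellich--Kondrachov argument or from the abstract Besov--Triebel theory, with references. So your proposal is supplying a proof the paper omits rather than competing with one.

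Your reduction of Part 2 to Part 1 via Sobolev embedding on an enlarged domain is correct. Your localization strategy for Part 1 is also the natural one, and terms (i) and (ii) go through once you track supports carefully: the factor $\eta(x)-\eta(y)$ vanishes unless $x$ or $y$ lies in $\supp\eta$, which localizes every piece to a bounded set and lets you pass from the $L^p(\R^N)$ bound on $f_k$ to $L^q$ bounds via H\"older. (Your phrase ``Young's inequality'' for the far piece is not literally right when $q<p$, since that would require a kernel in $L^r$ with $r<1$; the support observation is what actually closes it.)

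The genuine gap is the near part of the commutator (iii) when $t\ge 1$. Your split leaves a kernel $|x-y|^{1-N-t}$ on $\{|x-y|<1\}$, which you call a ``Riesz-potential-type operator of order $1-t$'' and propose to control by \Cref{la:sobolev}. For $t\in(0,1)$ this kernel is locally integrable and the argument works. For $t\in[1,2)$, however, the kernel is \emph{not} locally integrable, so the crude bound $|f_k(x)-f_k(y)|\le|f_k(x)|+|f_k(y)|$ gives a divergent integral; there is no Riesz potential of nonpositive order, and \Cref{la:sobolev} does not apply. To repair this you need to exploit more cancellation --- for instance a second-order Taylor expansion of $\eta$, which produces a locally integrable kernel of order $2-t>0$ plus a first-order term that is a truncated Calder\'on--Zygmund operator bounded on $L^p$ --- or simply cite a commutator estimate of Coifman--Meyer/Kenig--Ponce--Vega type, as the paper itself does later in the proof of \Cref{th:uptobd}. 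Your iteration $\laps{t}=\laps{1}\laps{t-1}$ for $t\ge 2$ presupposes the $t\in(0,2)$ case, so it inherits the same problem.
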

\Cref{pr:rellich} can be proven either by a direct adaptation of the proof of the Rellich-Kondrachov theorem for classical Sobolev spaces, \cite[Section 5.7]{E10}. Or it can be derived from the abstract theory of Besov and Triebel spaces, see \cite[Section 2.4.4]{RS96}.

\begin{lemma}\label{la:poincare}
Let $\Omega \subsubset \R^N$. Then there exists a constant $C > 0$ depending on $s$, $N$, $p$ and the diameter of $\Omega$ such that for any $f \in H^{s,p}_{00}(\Omega)$,
\[
 \|f\|_{L^p(\R^N)} \leq C\, \|\laps{s} f\|_{L^p(\R^N)}.
\]
\end{lemma}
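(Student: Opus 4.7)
The plan is to reduce the inequality to an application of the Sobolev embedding from \Cref{la:sobolev} via the duality between $\laps{s}$ and $\lapms{s}=I^{s}$, exploiting the boundedness of $\Omega$ (which controls the support of $f$) to trade integrability.

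First, by \Cref{th:density} it suffices to prove the inequality for $f \in C_c^\infty(\Omega)$, and pass to the limit by density afterwards. For such an $f$, \Cref{la:basicprops}$(b)$ (applied with exponents $s$ and $-s$ so that $\laps{0} = \id$) gives the representation
\[
f \;=\; \lapms{s}\,\laps{s} f \qquad \text{in } \R^N.
\]

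Next, I would handle the main (subcritical) regime $sp < N$, which is the case relevant to \Cref{th:convergence} and \Cref{th:Moscoconvergence}. Setting $p^\ast := \tfrac{Np}{N-sp}$, the Riesz-potential version of Sobolev embedding (second half of \Cref{la:sobolev}) yields
\[
\|f\|_{L^{p^\ast}(\R^N)} \;=\; \|\lapms{s}(\laps{s}f)\|_{L^{p^\ast}(\R^N)} \;\leq\; C(s,p,N)\,\|\laps{s}f\|_{L^p(\R^N)}.
\]
Since $f \equiv 0$ outside $\Omega$ and $\Omega$ is bounded, H\"older's inequality on $\Omega$ (with exponents $p^\ast/p$ and its conjugate) gives
\[
\|f\|_{L^p(\R^N)} \;=\; \|f\|_{L^p(\Omega)} \;\leq\; |\Omega|^{\,\frac{1}{p}-\frac{1}{p^\ast}} \,\|f\|_{L^{p^\ast}(\Omega)} \;=\; |\Omega|^{\,s/N}\,\|f\|_{L^{p^\ast}(\R^N)},
\]
using the identity $\tfrac{1}{p}-\tfrac{1}{p^\ast} = \tfrac{s}{N}$. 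Combining these two displays and bounding $|\Omega|^{s/N} \leq \omega_N^{s/N}\,\diam(\Omega)^s$ produces the claimed inequality with $C = C(s,p,N)\,\diam(\Omega)^s$.

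For the remaining supercritical regime $sp \geq N$, the same scheme works: by the first half of \Cref{la:sobolev} one embeds $f$ into some $L^{q}(\R^N)$ with $q>p$ (and $q=\infty$ when $sp>N$), and H\"older on the bounded support of $f$ again trades the higher integrability for an $L^p$ bound, at the cost of a power of $\diam(\Omega)$. I expect the main (and only real) obstacle to be bookkeeping the constants and exponents uniformly across the different ranges of $sp$ versus $N$; there is no serious analytic content beyond iterating Sobolev embedding together with the trivial H\"older trick that uses $\supp f \subset \Omega$.
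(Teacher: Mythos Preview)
Your argument is correct, but it is genuinely different from the paper's. The paper does not give a direct proof at all: it simply states that \Cref{la:poincare} ``follows from \Cref{pr:rellich} via the usual blow-up proof for Poincar\'e inequality,'' i.e.\ the standard compactness--contradiction argument (normalize $\|f_n\|_{L^p}=1$ with $\|\laps{s}f_n\|_{L^p}\to 0$, extract via Rellich, and reach a contradiction with the support condition).

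Your route is instead \emph{constructive}: the identity $f=\lapms{s}\laps{s}f$ together with the Riesz--Sobolev bound from \Cref{la:sobolev} and H\"older on the bounded support gives the inequality with the explicit constant $C(s,p,N)\,\diam(\Omega)^s$, which is the sharp scaling in the diameter. The paper's compactness argument yields no such quantitative information, but it has the virtue of handling all ranges of $s,p$ in one stroke with no case distinction. Your treatment of the regime $sp\geq N$ is only sketched; the cleanest way to avoid the borderline case $sp=N$ is to factor $\lapms{s}=\lapms{s_1}\lapms{s_2}$ with $s_1p<N$ and iterate the subcritical step (rather than invoking the first half of \Cref{la:sobolev}, whose stated hypothesis is awkward for this purpose). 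With that adjustment your argument is complete and arguably more informative than the paper's.
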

\Cref{la:poincare} follows from \Cref{pr:rellich} via the usual blow-up proof for Poincar\'e inequality. Due to \Cref{la:poincare}, for $f \in H_{00}^{s,p}(\Omega)$, we have the following norm-equivalence $\|f\|_{H_{00}^{s,p}(\Omega)} \aeq \|\laps{s} f\|_{L^p(\R^N)}$. Thus 
	\begin{equation}\label{eq:semi_norm}
		 \| f \|_{H_{00}^{s,p}(\Omega)} := \|\laps{s} f\|_{L^p(\R^N)} \, ,
	\end{equation}
	defines a norm on $H_{00}^{s,p}(\Omega)$.

We will make crucial use of the following adaptation of the Sobolev embedding.
\begin{proposition}\label{pr:sob2}
Let $\Omega_1 \subset \subset \Omega_2 \subset \R^N$ be two open sets, $p \in (1,\infty)$, $s \in (0,1)$ and $t \in (s,2s)$.
Then for any $q \in [1,\frac{N}{N-(2s-t)})$, $q \leq p$ we have
\[
 \|\laps{t} f\|_{L^{q} (\Omega_1)} \leq C(\Omega_1,\Omega_2,p,q,N) \brac{\|\laps{2s} f\|_{L^1(\Omega_2)} + \|\laps{s} f\|_{L^p(\R^N)}}
\]
whenever $f \in L^p(\R^N)$ is a function for which the right-hand-side is finite.
\end{proposition}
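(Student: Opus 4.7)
The plan is to reduce the statement to an analysis of $g := \laps{s} f \in L^p(\R^N)$, using Lemma \ref{la:basicprops}(b) to write $\laps{t} f = \laps{t-s} g$ with exponent $t - s \in (0, s) \subset (0, 1)$. Fix a smooth cutoff $\eta \in C_c^\infty(\Omega_2)$ with $\eta \equiv 1$ on an intermediate open set $\Omega_1^*$ satisfying $\Omega_1 \subsubset \Omega_1^* \subsubset \Omega_2$, and split
\[
\laps{t} f = \laps{t-s}(\eta g) + \laps{t-s}((1-\eta) g) \qquad \text{on } \Omega_1.
\]
The second summand will absorb $\|\laps{s}f\|_{L^p(\R^N)}$, while the first will absorb $\|\laps{2s}f\|_{L^1(\Omega_2)}$.

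The far piece $\laps{t-s}((1-\eta)g)$ on $\Omega_1$ is the easy one. Since $(1-\eta)$ vanishes on the neighborhood $\Omega_1^*$ of $\Omega_1$, for $x \in \Omega_1$ the singular integral defining $\laps{t-s}$ has no singular part: only $y \in \R^N \setminus \Omega_1^*$ contributes, and for such $y$ one has $|x-y| \geq d_0 := \dist(\Omega_1, \R^N \setminus \Omega_1^*)>0$. H\"older against $|\cdot|^{-N-(t-s)} \in L^{p'}(\{|z| \geq d_0\})$ then yields $\|\laps{t-s}((1-\eta)g)\|_{L^\infty(\Omega_1)} \aleq \|g\|_{L^p(\R^N)}$, which embeds into $L^q(\Omega_1)$ because $\Omega_1$ is bounded.

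For the near piece I would invoke Lemma \ref{la:basicprops}(b) once more to rewrite
\[
\laps{t-s}(\eta g) = \lapms{2s-t}\bigl(\laps{s}(\eta g)\bigr),
\]
legitimate because $\eta g$ is compactly supported in $L^p$. The strategy is then to prove $\laps{s}(\eta g) \in L^1(\R^N)$ with the required bound, and convert this into the $L^q(\Omega_1)$ bound for $\lapms{2s-t}$ via Minkowski's integral inequality on the Riesz kernel $K(x,y) := |x-y|^{(2s-t)-N}$. The restriction $q < N/(N-(2s-t))$ is precisely what makes $\|K(\cdot,y)\|_{L^q(\Omega_1)}$ uniformly bounded in $y \in \R^N$ (integrable singularity near the diagonal, plus boundedness of $\Omega_1$ for the global decay in $y$), which gives $\|\lapms{2s-t} w\|_{L^q(\Omega_1)} \aleq \|w\|_{L^1(\R^N)}$.

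The main obstacle is therefore the $L^1$ bound on $\laps{s}(\eta g)$. I would apply the product rule of Lemma \ref{la:basicprops}(d) to decompose it into three contributions: the term $\eta\,\laps{s} g = \eta\,\laps{2s} f$, controlled by $\|\laps{2s}f\|_{L^1(\Omega_2)}$; the term $g\,\laps{s}\eta$, controlled by $\|\laps{s}f\|_{L^p}$ via H\"older (since $\laps{s}\eta$ is smooth with decay $|x|^{-N-s}$ and hence lies in $L^{p'}(\R^N)$); and the bilinear commutator
\[
B(x) := \int_{\R^N} \frac{(\eta(x)-\eta(y))(g(x)-g(y))}{|x-y|^{N+s}}\,dy.
\]
Bounding $\|B\|_{L^1(\R^N)}$ by a constant times $\|g\|_{L^p(\R^N)}$ is the crux of the argument. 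I would exploit $|\eta(x)-\eta(y)| \aleq \min(1,|x-y|)$ together with the observation that the integrand vanishes unless $x \in \supp\eta$ or $y \in \supp\eta$; after changing variables to $h = y - x$ and using Fubini, the radial factor $\int \min(1,|h|)\,|h|^{-N-s}\,dh$ is finite because $s \in (0,1)$, while the $x$-integral of $|g(x)-g(x+h)|$ restricted to the set where $x$ or $x+h$ lies in the compact $\supp\eta$ is bounded by a constant multiple of $\|g\|_{L^p}\,|\supp\eta|^{1/p'}$ via H\"older and translation invariance. Combining the three bounds completes the $L^1$ estimate and hence the proposition.
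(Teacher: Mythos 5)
Your proposal is correct, and it follows a genuinely different decomposition from the paper's. The paper writes $\laps{t} f = \lapms{2s-t}\laps{2s} f$ and cuts the argument of the Riesz potential, $\laps{2s} f = \eta\laps{2s} f + (1-\eta)\laps{2s} f$; there the near piece $\lapms{2s-t}(\eta\laps{2s} f)$ is a one-line Young estimate on a truncated kernel, while the far piece forces an integration by parts against $\kappa(x,\cdot)=|x-\cdot|^{2s-t-N}(1-\eta)$, a product-rule expansion of $\laps{s}\kappa(x,\cdot)$, and separate control of three kernels $\Gamma_1,\Gamma_2,\Gamma_3$. You cut one rung lower, at $g := \laps{s}f \in L^p(\R^N)$: now the far piece $\laps{t-s}((1-\eta)g)$ is trivial on $\Omega_1$ (the kernel is bounded away from its singularity and $|z|^{-N-(t-s)}\in L^{p'}(\{|z|\geq d_0\})$), while the work is concentrated in showing $\laps{s}(\eta g)\in L^1(\R^N)$ for the near piece. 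Your three-term product-rule decomposition of $\laps{s}(\eta g)$ and the $L^1$ bound on the bilinear remainder $B$ --- via $\int\min(1,|h|)\,|h|^{-N-s}\,dh<\infty$ for $s\in(0,1)$, the compact support of $\eta$, and translation invariance of the $L^p$ norm --- are correct, as is the closing Minkowski step, whose requirement $\sup_{y\in\R^N}\int_{\Omega_1}|x-y|^{(2s-t-N)q}\,dx<\infty$ is exactly the hypothesis $q< N/(N-(2s-t))$. In effect you have moved the labor: the paper's $\Gamma$-analysis sits in the far piece, your commutator estimate sits in the near piece, and yours has the modest side benefit that the hypothesis $q\leq p$ is never visibly used (the paper invokes it when bounding $\|\chi_{\Omega_2}\laps{s}f\|_{L^q}$ by H\"older in its $\Gamma_2$ step). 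One caveat shared by both arguments: the pointwise integral representations and the product rule of \Cref{la:basicprops} are applied to functions that are a priori only in $L^p$; a mollification of $g$ with uniform bounds, passed to the limit, would fully justify this step in either proof.
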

\begin{proof}
We may assume w.l.o.g. that $\Omega_2 \subset \subset \R^N$.

Recall that in view of \Cref{la:basicprops},
\[
 \laps{t} f = \lapms{2s-t} \laps{2s} f.
\]
Take $\eta \in C_c^\infty(\Omega_2)$ with $\eta \equiv 1$ in $\Omega_1$ a suitable cutoff function. Then,
\begin{equation}\label{sobl2:1}
 \laps{t} f = \lapms{2s-t} \brac{\eta \laps{2s} f} + \lapms{2s-t} \brac{(1-\eta) \laps{2s} f} . 
\end{equation}
Now observe that for $x \in \Omega_1$ we can write
\[
 \lapms{2s-t} (\eta \laps{2s} f)(x) = k\, \ast (\eta \laps{2s} f)(x)
\]
where for some $\Lambda > 0$ depending on diameter of $\Omega_1 \subset \supp \eta$,
\[
 k(z) = \chi_{\left\{ z\ : \ |z| \le c \Lambda \right\}} |z|^{2s-t-N} ,
\]
where $c > 0$ is a generic constant.
One can check that $k \in L^{q}(\R^N)$ for any $1 \leq q < \frac{N}{N-(2s-t)}$. By Young's inequality for convolutions,
\begin{equation}\label{sob2:56}
 \|k\, \ast \brac{\eta (\laps{2s} f)}\|_{L^{q}(\R^N)} \aleq C(k)\|\eta \laps{2s} f\|_{L^1(\R^N)}  \aleq C(k,\eta)\, \|\laps{2s} f\|_{L^1(\Omega_2)}.
\end{equation}
That is,
\[
 \|\lapms{2s-t} \brac{\eta \laps{2s} f}\|_{L^{q}(\Omega_1)} \aleq  C(k,\eta)\, \|\laps{2s} f\|_{L^1(\Omega_2)}.
\]
This treats the first term in \eqref{sobl2:1}.

It remains to treat the second term in \eqref{sobl2:1}. For $x \in \Omega_1$ we have $1-\eta(x) = 0$. If we define 
\[
 \kappa(x,y) := |x-y|^{2s-t-N} (1-\eta(y))
\]
then we observe that for every fixed $x \in \Omega_1$, $\kappa(x,\cdot)$ is a smooth, bounded function. By \Cref{la:basicprops},
\[
  \lapms{2s-t} \brac{(1-\eta) \laps{2s} f}(x) = \int_{\R^N} (\laps{s}_y \kappa(x,\cdot))(y)\, \laps{s} f(y)\, dy .
\]
From \Cref{la:basicprops} we have $\laps{s} |\cdot|^{2s-t-N} = c\, |\cdot|^{-(t-s)-N}$. Since $t \in (s,2s)$ and $s \in (0,1)$ we then have from the product rule in \Cref{la:basicprops},
\[
 (\laps{s} \kappa(x,\cdot))(y) =: \sum_{i=1}^3 c_i \Gamma_i(x,y) 
\]
where
\[
 \Gamma_1(x,y) := |x-y|^{-(t-s)-N} (1-\eta(y)) 
\]
\[
 \Gamma_2(x,y) := |x-y|^{2s-t-N} \laps{s} \eta(y)
\]
and
\[
 \Gamma_3(x,y) := \int_{\R^N} \frac{(|y-x|^{2s-t-N}-|z-x|^{2s-t-N})(\eta(y)-\eta(z))}{|y-z|^{N+s}}\, dz.
\]
\underline{We first treat $\Gamma_1$}: We have $\dist(\Omega_1,\supp(1-\eta)) > 0$, so we have
\[
 \sup_{x \in \Omega_1} \Gamma_1(x,y) \aleq (1+|y|)^{-(t-s)-N}.
\]
Since $t-s > 0$ the right-hand side is integrable to any power, in particular
\[
 \sup_{x \in \Omega_1} \|\Gamma_1(x,\cdot)\|_{L^{p'}(\R^N)} < \infty,
\]
which implies that
\[
 \sup_{x \in \Omega_1} \int_{\R^N} \Gamma_1(x,y)\, \laps{s} f(y) dy \aleq \|\laps{s} f\|_{L^p(\R^N)}.
\]
Since $\Omega_1$ is bounded, we find
\[
 \brac{\int_{\Omega_1} \left |\int_{\R^N} \Gamma_1(x,y)\, \laps{s} f(y) dy \right |^q}^{\frac{1}{q}} \aleq \|\laps{s} f\|_{L^p(\R^N)}.
\]
\underline{Now we treat $\Gamma_2$}. Since $\eta \in C_c^\infty(\Omega_2)$, for any $x \in \Omega_1$,
\[
 \laps{s} \eta(y) \aleq \frac{1}{1+|y|^{N+2s-t}}.
\]
Moreover for $x \in \Omega_1$ and $y \not \in \Omega_2$ we have $|x-y| \aeq 1+|y|$. Consequently, 
for any $x \in \Omega_1$,
\[
\begin{split}
 |x-y|^{2s-t-N}|\laps{s} \eta(y)| =& \chi_{\Omega_2}(y) |x-y|^{2s-t-N}|\laps{s} \eta(y)|  \\
 &\quad+ \chi_{\R^N \backslash \Omega_2} |x-y|^{2s-t-N}|\laps{s} \eta(y)| \\
 \aleq& \chi_{\Omega_2}(y)\, |x-y|^{2s-t-N} + \brac{1+|y|}^{-2N}.
 \end{split}
\]
Observe again that the second term can be integrated to any nonnegative power, so that we have for any $x \in \Omega_1$
\[
 \int_{\R^N} \Gamma_2(x,y) \laps{s} f(y) \aleq \lapms{2s-t} \brac{\chi_{\Omega_2} |\laps{s} f|}(x) + \|\laps{s}f\|_{L^p(\R^N)}. 
\]
Using again Young's inequality as in \eqref{sob2:56}, we then find
\[
\begin{split}
 &\brac{\int_{\Omega_1} \left |\int_{\R^N} \Gamma_2(x,y)\, \laps{s} f(y) dy \right |^q}^{\frac{1}{q}} \\
 \aleq& \|\lapms{2s-t} \brac{\chi_{\Omega_2} |\laps{s} f|}\|_{L^q(\Omega_1)}+ \|\laps{s}f\|_{L^p(\R^N)}\\
 \aleq& \|\chi_{\Omega_2}\, |\laps{s} f|\|_{L^q(\R^N)}  +\|\laps{s}f\|_{L^p(\R^N)}\\
\aleq& \|\laps{s}f\|_{L^p(\R^N)}\\ 
\end{split}
 \]
It remains to \underline{treat $\Gamma_3$},
\[
 \Gamma_3(x,y) = \int_{\R^N} \frac{(|y-x|^{2s-t-N}-|z-x|^{2s-t-N})(\eta(y)-\eta(z))}{|y-z|^{N+s}}\, dz.
\]
Firstly, if $y \not \in \Omega_2$ then $\eta(y)-\eta(z) =0$ unless $z \in \supp \eta$. Moreover, $y \not \in \Omega_2$ and $z \in \supp \eta$ means that $|z-x| \aleq |y-x|$ and $(1+|y|) \aeq |y-z|$. Thus, since $2s - t > 0$
\[
\sup_{x \in \Omega_1}\, \chi_{\R^N \backslash \Omega_2}(y)| \Gamma_3(x,y)| \aleq \frac{1}{1+|y|^{N+s}}\, \int_{\supp \eta} |z-x|^{2s-t-N}\, dz \aleq \frac{1}{1+|y|^{N+s}}.
\]
That is,
\begin{equation}\label{eq:sob22}
\begin{split}
 &\sup_{x \in \Omega_1} |\int_{\R^N \backslash \Omega_2} \Gamma_3(x,y) \laps{s} f(y)\, dy|\\
 \aleq &\int_{\R^N} |f(y)| \frac{1}{1+|y|^{N+s}} \aleq \|\laps{s} f\|_{L^p(\R^N)}\, \|\frac{1}{1+|\cdot|^{N+s}}\|_{L^{p'}(\R^N)} \aleq \|\laps{s} f\|_{L^p(\R^N)}.
 \end{split}
\end{equation}
As for $\chi_{\Omega_2}(y)| \Gamma_3(x,y)|$, we estimate
\begin{equation}\label{eq:sob23}
\begin{split}
 \chi_{\Omega_2}(y)| \Gamma_3(x,y)| \aleq& \chi_{\Omega_2}(y)\int_{\dist(z,\Omega_2) > 1}  \frac{\left ||y-x|^{2s-t-N}-|z-x|^{2s-t-N}\right |\, \left |\eta(y)-\eta(z)\right|}{|y-z|^{N+s}}\, dz \\
 &+  \chi_{\Omega_2}(y)\int_{\dist(z,\Omega_2) \leq 1}\frac{\left ||y-x|^{2s-t-N}-|z-x|^{2s-t-N}\right |\, \left |\eta(y)-\eta(z)\right|}{|y-z|^{N+s}}\, dz.
 \end{split}
\end{equation}
Observe that if $y \in \Omega_2$, $x \in \Omega_1$ and $\dist(z,\Omega_2) > 1$ then $|y-z| \aeq 1+|z|$, and $|y-x| \aleq |z-x|$. Thus,
\[
\begin{split}
&\chi_{\Omega_2}(y) \int_{\dist(z,\Omega_2) > 1}  \frac{\left ||y-x|^{2s-t-N}-|z-x|^{2s-t-N}\right |\, \left |\eta(y)-\eta(z)\right|}{|y-z|^{N+s}}\, dz \\
\aleq&\chi_{\Omega_2}(y)|y-x|^{2s-t-N} \frac{1}{1+|z|^{N+s}}\, dz \aeq \chi_{\Omega_2}(y)|y-x|^{2s-t-N}.
 \end{split}
\]
On the other hand, if $\dist(z,\Omega_2) \leq 1$ then either $|y-x| \leq 10|z-x|$ or $|y-x| \aeq |z-y|$, and thus (using $s < 1$)
\[
\begin{split}
 &\chi_{\Omega_2}(y)\int_{\dist(z,\Omega_2) \leq 1}  \frac{\left ||y-x|^{2s-t-N}-|z-x|^{2s-t-N}\right |\, \left |\eta(y)-\eta(z)\right|}{|y-z|^{N+s}}\, dz\\
  \aleq &\|\nabla \eta\|_{L^\infty}\, \chi_{\Omega_2}(y)\int_{\dist(z,\Omega_2) \leq 1}  \, \frac{\left ||y-x|^{2s-t-N}-|z-x|^{2s-t-N}\right |}{|y-z|^{N+s-1}}\, dz\\
 \aleq &|y-x|^{2s-t-N}\, \chi_{\Omega_2}(y)\int_{\dist(z,\Omega_2) \leq 1}  \frac{1}{|y-z|^{N+s-1}}\, dz\\
 &+\chi_{\Omega_2}(y) \frac{1}{|x-y|^{N+s-1}}\, \int_{|z-x| \aleq |x-y|} |z-x|^{2s-t-N}\, dz\\
  \aleq &|y-x|^{2s-t-N}\, \chi_{\Omega_2}(y)+\chi_{\Omega_2}(y) \frac{|x-y|^{2s-t}}{|x-y|^{N+s-1}}\\
  \aleq&\brac{|y-x|^{2s-t-N}+|y-x|^{s+1-t-N}} \chi_{\Omega_2}(y) \, .
 \end{split}
\]
In second to the last step we used that $x \in \Omega_1$, and $\Omega_1,\Omega_2$ are both bounded sets.

All in all, from \eqref{eq:sob23} we arrive for any $x \in \Omega_1$,
\[
\begin{split}
 &|\int_{\Omega_2} \Gamma_3(x,y) \laps{s} f(y)\, dy|\\
 \aleq &\lapms{2s-t} (\chi_{\Omega_2} |\laps{s} f|)(x)+\lapms{s+1-t} (\chi_{\Omega_2} |\laps{s} f|)(x).
 \end{split}
\]
Arguing as in \eqref{sob2:56}, for $\sigma = 2s-t$ or $\sigma = s+1-t$ (in particular $\sigma \in (0,1)$) we have
\[
 \|\lapms{s} (\chi_{\Omega_2} |\laps{s} f|)\|_{L^q(\R^N)} \aleq \|\chi_{\Omega_2} |\laps{s} f|\|_{L^1(\R^N)} \aleq \|\laps{s} f\|_{L^p(\R^N)}.
\]
Together with estimate \eqref{eq:sob22} we arrive at
\[
 \| x \mapsto \int_{\R^N} \Gamma_3(x,y) \laps{s} f(y) dy\|_{L^q_x(\Omega_1)} \aleq \|\laps{s} f\|_{L^p(\R^N)}.
\]
This finishes the proof.
\end{proof}

\begin{lemma}[A type of H\"older's inequality]
\label{lem:Hineq}
Let $1 < q < p < \infty$ and $\Omega \subset \subset \R^N$, $s \in (0,1)$. Then 
\[
 \|\laps{s} \varphi\|_{L^q(\R^N)} \leq C(\Omega,s,p,q)\, \|\laps{s} \varphi\|_{L^p(\R^N)}
\]
holds for all $\varphi \in C_c^\infty(\Omega)$.
\end{lemma}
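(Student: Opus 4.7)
The plan is to decompose $\R^N$ into a bounded region containing $\supp\varphi$ and its complement, since on a bounded set the inequality follows immediately from H\"older. Fix an open bounded set $\Omega'$ with $\overline{\Omega}\subsubset \Omega'\subsubset\R^N$, say $\Omega':=\{x\in\R^N : \dist(x,\Omega)<1\}$. On $\Omega'$, the inclusion $q<p$ together with the finiteness of $|\Omega'|$ gives directly from H\"older
\[
\|\laps{s}\varphi\|_{L^q(\Omega')} \le |\Omega'|^{1/q-1/p}\,\|\laps{s}\varphi\|_{L^p(\R^N)}.
\]

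The genuine obstacle is that $\laps{s}\varphi$ is \emph{not} compactly supported, so a global H\"older bound is unavailable. For the exterior I would exploit the singular-integral representation of $\laps{s}$ from \Cref{la:basicprops}: since $\varphi(x)=0$ for $x\notin\Omega'$,
\[
|\laps{s}\varphi(x)| \aleq \int_{\Omega}\frac{|\varphi(y)|}{|x-y|^{N+s}}\,dy,
\]
and for $x\notin \Omega'$ and $y\in\Omega$ one has $|x-y|\ge 1$ with $|x-y|\aeq 1+|x|$ uniformly in $y$. Hence
\[
|\laps{s}\varphi(x)| \aleq \frac{\|\varphi\|_{L^1(\Omega)}}{(1+|x|)^{N+s}}.
\]
Since $q(N+s)>N$ (as $q\ge 1$ and $s>0$), the weight $(1+|x|)^{-(N+s)}$ belongs to $L^q(\R^N\setminus\Omega')$, which yields
\[
\|\laps{s}\varphi\|_{L^q(\R^N\setminus\Omega')} \aleq \|\varphi\|_{L^1(\Omega)}.
\]

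To close the argument I would convert $\|\varphi\|_{L^1(\Omega)}$ back into something controlled by $\|\laps{s}\varphi\|_{L^p(\R^N)}$. H\"older on the bounded set $\Omega$ gives $\|\varphi\|_{L^1(\Omega)}\le |\Omega|^{1-1/p}\|\varphi\|_{L^p(\R^N)}$, and the Poincar\'e inequality from \Cref{la:poincare} (applicable since $\varphi\in C_c^\infty(\Omega)\subset H^{s,p}_{00}(\Omega)$) then gives $\|\varphi\|_{L^p(\R^N)}\aleq \|\laps{s}\varphi\|_{L^p(\R^N)}$. Summing the interior and exterior contributions produces the desired inequality.

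The main subtlety, and the easy source of error, is handling the non-local tail: one must not forget that $\laps{s}\varphi$ only decays polynomially (and is nonzero) outside $\supp\varphi$, and the whole argument is essentially a careful bookkeeping of that tail using the $(1+|x|)^{-(N+s)}$ kernel estimate together with Poincar\'e to transfer the $L^1$-norm of $\varphi$ into the $L^p$-norm of $\laps{s}\varphi$.
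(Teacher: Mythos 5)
Your proof is correct and follows essentially the same strategy as the paper: split $\R^N$ into a bounded neighborhood of $\supp\varphi$ (where plain H\"older applies) and the far region (where the singular kernel of $\laps{s}$ is bounded away from its singularity and integrable), then close via Poincar\'e. The only cosmetic difference is that the paper uses a smooth cutoff $\chi_2$ and Young's inequality in the form $\|k*\varphi\|_{L^q}\le\|k\|_{L^1}\|\varphi\|_{L^q}$, whereas you use a hard cutoff and the conjugate-exponent version $\|k*\varphi\|_{L^q}\aleq\|k\|_{L^q}\|\varphi\|_{L^1}$ via the pointwise decay $(1+|x|)^{-(N+s)}$; both are instances of the same kernel estimate and both reduce to $\|\laps{s}\varphi\|_{L^p}$ by H\"older on the bounded set and \Cref{la:poincare}.
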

\begin{proof}
Fix $\Omega_1 \subsubset \R^N$ such that $\Omega \subsubset \Omega_1$ and take $\chi_1, \chi_2 \in C_c^\infty(\Omega_1)$ with both $\chi_1,\chi_2 \equiv 1$ in a neighborhood of $\Omega$, but so that $\chi_1(x) (1-\chi_2(x)) = 0$. Then, by H\"older's inequality
\[
  \|\chi_2 \laps{s} \varphi\|_{L^q(\R^N)} \aleq \|\laps{s} \varphi\|_{L^p(\R^N)}.
\]
Moreover, by the support of $1-\chi_2$, $\chi_1$ and $\varphi$,
\[
 (1-\chi_2(x)) \laps{s} \varphi(x) = \int_{\R^N} (1-\chi_2(x)) \chi_1(y)\, |x-y|^{-s-N} \varphi(y)\, dy.
\]
Let 
\[
 k(z) := \begin{cases}
            |z|^{-s-N} \quad& \text{if } |z| \geq \frac{1}{2} \dist(\supp (1-\chi_2),\supp (\chi_1))\\
            0\quad \text{otherwise}\\
           \end{cases}
\]
then
\[
 \left |(1-\chi_2(x)) \laps{s} \varphi(x)\right | \aleq  \int_{\R^N} k(z) \varphi(y)\, dy.
\]
Since $k \in L^1(\R^N)$ we have by Young's inequality for convolutions,
\[
 \|(1-\chi_2) \laps{s} \varphi\|_{L^q} \aleq \|\varphi\|_{L^q(\R^N)}.
\]
By H\"older's and then Poincar\`e inequality, \Cref{la:poincare}, recall that $\supp \varphi \subset \Omega$,
\[
 \|\varphi\|_{L^q(\R^N)} \aleq \|\varphi\|_{L^p(\R^N)} \aleq \|\laps{s} \varphi\|_{L^p(\R^N)}.
\]
The proof is complete.
\end{proof}

\section{Characterization of the Dual of \texorpdfstring{$H^{s,p}$}{Hsp}}
\label{s:char}

We will also make use of the classification of the dual space of $H^{s,p}$-spaces. The following proposition is a well known-result, indeed it is a consequence of Hahn-Banach theorem and Helmholtz-Hodge decomposition. {We state it for the classical Sobolev space $H^{1,p}_{0}(\Omega)= \overline{C_c^\infty(\Omega)}^{\|\cdot\|_{H^{1,p}}}$. If $\partial \Omega$ is regular enough (namely, if the trace and extension theorem hold) this classical space coincides with $H^{1,p}_{00}(\Omega)$ as we defined it}.
\begin{proposition}
Let $p \in (1,\infty)$, $p'=\frac{p}{p-1}$. For {each} $h \in H^{1,p}_0(\Omega)^\ast$, there exists $f \in L^{p'}(\Omega)$ and $g \in W^{1,p'}(\Omega)$ such that 
\[
 h[\varphi] = \int_{\Omega} f\cdot \varphi + \int_{\Omega} \nabla g\cdot \nabla \varphi 
\]
with
\[
 \|f\|_{L^{p'}(\Omega)} + \|g\|_{W^{1,p'}(\Omega)} \leq C(\Omega,p) \|h\|_{H^{1,p}_0(\Omega)^\ast} \, .
\]
\end{proposition}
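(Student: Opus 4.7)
The plan combines the Hahn--Banach extension theorem with the Riesz representation of $(L^p)^\ast$ and an $L^{p'}$--Helmholtz--Hodge decomposition on the Lipschitz domain $\Omega$; this is essentially the route already suggested by the statement.

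First I would introduce the linear isometric embedding
\[
T \colon H^{1,p}_0(\Omega) \longrightarrow L^p(\Omega) \times L^p(\Omega;\R^N), \qquad T\varphi := (\varphi,\nabla \varphi),
\]
equipping the product space with a norm equivalent to $\|u\|_{L^p(\Omega)}+\|v\|_{L^p(\Omega;\R^N)}$. The functional $\tilde h(T\varphi):=h[\varphi]$ is a well-defined continuous linear form on the closed subspace $T(H^{1,p}_0(\Omega))$ with $\|\tilde h\|=\|h\|_{H^{1,p}_0(\Omega)^\ast}$. Applying Hahn--Banach yields a norm-preserving extension to all of $L^p(\Omega)\times L^p(\Omega;\R^N)$, and the Riesz representation theorem for $L^p$ produces $f\in L^{p'}(\Omega)$ and $\vec G\in L^{p'}(\Omega;\R^N)$ with
\[
h[\varphi] \;=\; \int_\Omega f\,\varphi\,dx + \int_\Omega \vec G\cdot \nabla \varphi\,dx \qquad \forall\,\varphi\in H^{1,p}_0(\Omega),
\]
together with the norm control $\|f\|_{L^{p'}(\Omega)}+\|\vec G\|_{L^{p'}(\Omega)}\leq C(\Omega,p)\|h\|_{H^{1,p}_0(\Omega)^\ast}$.

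The main step, and the main obstacle, is to replace the generic $L^{p'}$-vector field $\vec G$ by the gradient of a $W^{1,p'}(\Omega)$-function. My plan is to invoke the $L^{p'}$ Helmholtz--Hodge decomposition on the bounded Lipschitz domain $\Omega$, writing
\[
\vec G = \nabla g + \vec H,\qquad g\in W^{1,p'}(\Omega), \quad \vec H\in L^{p'}(\Omega;\R^N),\quad \divv \vec H = 0 \text{ in } \mathcal{D}'(\Omega),
\]
with the bound $\|g\|_{W^{1,p'}(\Omega)}+\|\vec H\|_{L^{p'}(\Omega)}\aleq \|\vec G\|_{L^{p'}(\Omega)}$. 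Concretely, $g$ is a weak $W^{1,p'}$ solution of the Neumann Poisson problem $\lap g=\divv \vec G$ in $\Omega$ with $\partial_\nu g = \vec G\cdot \nu$ on $\partial \Omega$; the $L^{p'}$-solvability of this problem on an arbitrary bounded Lipschitz domain is the classical but nontrivial input that the argument rests on.

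Once the decomposition is in hand the conclusion is immediate: since $\divv \vec H=0$ distributionally in $\Omega$, we have $\int_\Omega \vec H\cdot \nabla \varphi\,dx=0$ for every $\varphi\in C_c^\infty(\Omega)$, and by density of $C_c^\infty(\Omega)$ in $H^{1,p}_0(\Omega)$ (\Cref{th:density}) this identity persists for all $\varphi\in H^{1,p}_0(\Omega)$. Substituting $\vec G=\nabla g+\vec H$ in the representation of $h$ then gives
\[
h[\varphi] \;=\; \int_\Omega f\,\varphi\,dx + \int_\Omega \nabla g\cdot \nabla \varphi\,dx,
\]
and the quantitative estimate on $\|f\|_{L^{p'}(\Omega)}+\|g\|_{W^{1,p'}(\Omega)}$ follows by chaining the norm bounds from Hahn--Banach and from the Helmholtz--Hodge decomposition.
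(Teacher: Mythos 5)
Your overall route -- Hahn--Banach extension into $L^{p}(\Omega)\times L^{p}(\Omega;\R^N)$, Riesz representation to get a pair $(f,\vec G)$, then replacing $\vec G$ by the gradient part of a Helmholtz-type decomposition -- is exactly what the paper alludes to when it says the result is ``a consequence of Hahn--Banach theorem and Helmholtz--Hodge decomposition.'' The first half of your argument is complete and correct, and the final reduction (that $\divv \vec H=0$ in $\mathcal D'(\Omega)$ kills the remainder against gradients of $C_c^\infty(\Omega)$-functions, which are dense in $H^{1,p}_0(\Omega)$) is also right.

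The step that does not survive scrutiny as stated is the invocation of the $L^{p'}$ Neumann/Helmholtz decomposition \emph{on the Lipschitz domain $\Omega$} for \emph{all} $p'\in(1,\infty)$. On a general bounded Lipschitz domain, the $L^{p'}$ Helmholtz decomposition (equivalently, $W^{1,p'}$ solvability of the Neumann problem $\lap g=\divv\vec G$, $\partial_\nu g=\vec G\cdot\nu$) is known to hold only for $p'$ in a range around $2$ -- roughly $\frac{3}{2}-\eps<p'<3+\eps$ when $N\geq 3$ -- and counterexamples exist outside this range (Fabes--Mendez--Mitrea; Jerison--Kenig). So calling this input ``classical'' understates the problem: for $p'$ far from $2$ your Step~2 would actually fail on some Lipschitz $\Omega$. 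The fix is to avoid the boundary entirely, since you only need $\divv(\vec G-\nabla g)=0$ in $\mathcal D'(\Omega)$ and \emph{not} a normal boundary condition on $\vec H$: extend $\vec G$ by zero to $\R^N$, set $\nabla g := R\, (R\cdot \vec G)$ (the Leray projection of $\vec G$ onto gradients, a matrix of second-order Riesz transforms, bounded on $L^{p'}(\R^N)$ for every $p'\in(1,\infty)$), take $g$ a primitive normalized to have zero mean on $\Omega$, and restrict. Then $\lap g=\divv\vec G$ in $\mathcal D'(\R^N)\supset\mathcal D'(\Omega)$, $\|\nabla g\|_{L^{p'}(\R^N)}\aleq\|\vec G\|_{L^{p'}(\Omega)}$, and Poincar\'e on the bounded set $\Omega$ gives $\|g\|_{L^{p'}(\Omega)}\aleq\|\nabla g\|_{L^{p'}(\Omega)}$, so $g\in W^{1,p'}(\Omega)$ with the claimed bound -- for any bounded open $\Omega$, with no boundary regularity needed. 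This is the same philosophy the paper actually carries out in its fractional analogue \Cref{pr:dualcharglobal}, where the decomposition is done globally on $\R^N$ via the Fourier-multiplier operator $B_s=(I+\laps{s})^{-1}$ precisely so that Mihlin/H\"ormander applies for all exponents.
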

It is known that this can also be done for fractional Sobolev spaces, cf. \cite[Theorem 5.3]{AVW19} and \cite[Proposition 2.2.]{MSY20}. 

\begin{proposition}[Characterization of $H^{s,p}(\R^N)^\ast$]\label{pr:dualcharglobal}
Let $p \in (1,\infty)$, $p'=\frac{p}{p-1}$, and $s \in (0,N)$. {If} $h \in H^{s,p}(\R^N)^\ast$ then there exists $f_1 \in L^{p'}(\R^N)$ and $f_2 \in H^{s,p'}(\R^N)$ such that 
\[
 h[\varphi] = \int f_1 \varphi + \int \laps{s} f_2 \laps{s} \varphi, \qquad \forall \varphi \in C_c^\infty(\R^N)
\]
and
\[
 \|f_1\|_{L^{p'}(\R^N)} + \|f_2\|_{L^{p'}(\R^N)} + \|\laps{s} f_2\|_{L^{p'}(\R^N)} \le C(s,p,N) \|h\|_{H^{s,p}(\R^N)^\ast} \, .
\]
\end{proposition}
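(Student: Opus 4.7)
The plan is to combine the standard Hahn--Banach/Riesz representation argument with a Littlewood--Paley type frequency decomposition. First, I would observe that the linear map
\[
 T: H^{s,p}(\R^N) \to L^p(\R^N) \times L^p(\R^N), \qquad T\varphi = (\varphi, \laps{s}\varphi),
\]
is an isometric embedding when the target carries the norm $\|(u,v)\| := \|u\|_{L^p(\R^N)} + \|v\|_{L^p(\R^N)}$. Given $h \in H^{s,p}(\R^N)^\ast$, the functional $h \circ T^{-1}$ defined on $T(H^{s,p}(\R^N))$ extends by Hahn--Banach to $L^p \times L^p$ without increase of norm, and then the Riesz representation theorem produces $\tilde f_1, g \in L^{p'}(\R^N)$ with
\[
 h[\varphi] = \int_{\R^N} \tilde f_1 \varphi + \int_{\R^N} g\, \laps{s}\varphi, \qquad \varphi \in C_c^\infty(\R^N),
\]
and $\|\tilde f_1\|_{L^{p'}} + \|g\|_{L^{p'}} \aleq \|h\|_{H^{s,p}(\R^N)^\ast}$.

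The core task is to recast $g\laps{s}\varphi$ in the desired form $\laps{s}f_2 \laps{s}\varphi$ (up to a term that can be absorbed into $\tilde f_1$). To this end, fix a cutoff $\chi \in C_c^\infty(\R^N)$ with $\chi \equiv 1$ in a neighborhood of the origin, and split $g = g_{low} + g_{high}$ via $g_{low} := \mathcal{F}^{-1}(\chi \hat g)$ and $g_{high} := g - g_{low}$. For the high-frequency part, define $f_2$ through the Fourier multiplier
\[
 \hat f_2(\xi) := c^{-1}(1-\chi(\xi))\, |\xi|^{-s}\, \hat g(\xi).
\]
The symbol $m(\xi) := c^{-1}(1-\chi(\xi))|\xi|^{-s}$ is smooth on all of $\R^N$ (the singularity of $|\xi|^{-s}$ at the origin is killed by the factor $1-\chi$), and since $s>0$ it satisfies $|\partial^\alpha m(\xi)| \aleq |\xi|^{-|\alpha|}$ for every multi-index $\alpha$. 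By the Mikhlin--H\"ormander multiplier theorem, $f_2 \in L^{p'}(\R^N)$ with $\|f_2\|_{L^{p'}} \aleq \|g\|_{L^{p'}}$; by construction $\laps{s}f_2 = g_{high}$, so $\laps{s}f_2 \in L^{p'}$ as well, placing $f_2 \in H^{s,p'}(\R^N)$.

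It remains to treat $\int g_{low}\laps{s}\varphi$. The multiplier attached to $\laps{s}g_{low}$ is $c|\xi|^s \chi(\xi)$, which is compactly supported and satisfies $|\partial^\alpha(|\xi|^s\chi(\xi))| \aleq |\xi|^{s-|\alpha|}$ on its support; since $s>0$ this again verifies Mikhlin (as $|\xi|^{s-|\alpha|} \leq C|\xi|^{-|\alpha|}$ on $\supp \chi$), so $F := \laps{s}g_{low} \in L^{p'}(\R^N)$ with $\|F\|_{L^{p'}} \aleq \|g\|_{L^{p'}}$. Using the integration-by-parts identity \Cref{la:basicprops}(a), extended from the $C_c^\infty$ class to the present setting by approximating $g_{low}$ by compactly supported smooth functions (in $L^{p'}$, with the multiplier bound controlling $\laps{s}$ of the approximants), we obtain $\int g_{low}\laps{s}\varphi = \int F\varphi$. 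Setting $f_1 := \tilde f_1 + F \in L^{p'}(\R^N)$ yields the claimed representation and the norm bound $\|f_1\|_{L^{p'}} + \|f_2\|_{L^{p'}} + \|\laps{s}f_2\|_{L^{p'}} \aleq \|h\|_{H^{s,p}(\R^N)^\ast}$. The main obstacles are verifying the Mikhlin--H\"ormander conditions for the symbols $(1-\chi)|\xi|^{-s}$ and $|\xi|^s\chi$ (the interplay between smoothness at $0$ and decay at $\infty$) and justifying the integration by parts outside the $C_c^\infty \times C_c^\infty$ class covered by \Cref{la:basicprops}(a).
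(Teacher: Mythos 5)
Your argument is correct, and it follows the paper's proof up through the Hahn--Banach extension and Riesz representation step, which is identical. Where the two proofs diverge is in how the term $\int g\,\laps{s}\varphi$ is restructured: the paper introduces the Bessel-type operator $B_s := (I+\laps{s})^{-1}$ and exploits the exact algebraic identity $g = \laps{s}B_s g + B_s g$, then sets $f_1 := f + \laps{s}B_s g$ and $f_2 := B_s g$; you instead use a sharp Littlewood--Paley frequency cutoff $g = g_{\text{low}} + g_{\text{high}}$, set $f_2$ by inverting $\laps{s}$ on the high-frequency piece (where the symbol $|\xi|^{-s}$ is nonsingular), and absorb $\laps{s}g_{\text{low}}$ into $f_1$. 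Both decompositions reduce to Mikhlin--H\"ormander bounds: the paper checks the symbols $\tfrac{1}{1+c|\xi|^s}$ and $\tfrac{c|\xi|^s}{1+c|\xi|^s}$, you check $(1-\chi(\xi))|\xi|^{-s}$ and $|\xi|^s\chi(\xi)$, and in each case the relevant derivative bounds $|\partial^\alpha m(\xi)|\aleq|\xi|^{-|\alpha|}$ hold for exactly the reasons you give (the factor $1-\chi$ respectively $\chi$ localizes away from, or absorbs, the nonsmoothness of $|\xi|^{\pm s}$ at the origin). The paper's choice is a touch slicker -- there is no arbitrary cutoff and the identity $1 = \tfrac{|\xi|^s}{1+|\xi|^s} + \tfrac{1}{1+|\xi|^s}$ is exact rather than a partition -- but your frequency split is equally rigorous and arguably more transparent about where each part of $g$ goes. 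One minor point worth making explicit: the IBP step $\int g_{\text{low}}\laps{s}\varphi = \int(\laps{s}g_{\text{low}})\varphi$ is most cleanly justified by Parseval/distributional duality (both $g_{\text{low}},\laps{s}g_{\text{low}}\in L^{p'}$ and $\varphi,\laps{s}\varphi\in L^p$ with $\varphi\in\mathcal{S}$) rather than by approximation of $g_{\text{low}}$ by $C_c^\infty$ functions; the paper leaves the corresponding step for $B_s g$ implicit, so your version is at the same level of rigor either way.
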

\begin{proof}
We consider $H^{s,p}(\R^N)$ as a linear subspace $X \subset L^p(\R^N)\times L^p(\R^N)$ via the identification
\[
 T: f \mapsto (f,\laps{s}f),
\]
with $X := T(H^{s,p}(\R^N))$. Then $h \in H^{s,p}(\R^N)^\ast$ induces a linear bounded functional $h^\ast$ on $X$ via,
\[
 h^\ast (f,\laps{s} f) := h[f],
\]
with
\[
 \|h^\ast \|_{X^\ast} = \|h\|_{H^{s,p}(\R^N)^\ast} \, .
\]
By Hahn-Banach theorem there exists an extension of $h^\ast$ to all of $L^{p}(\R^N) \times L^{p}(\R^N)$. By Riesz representation theorem there must be an $f \in L^{p}(\R^N)$, $g \in L^{p}(\R^N)$ such that 
\[
 h^\ast(\varphi,\psi) = \int_{\R^N} f \varphi + \int_{\R^N} g \psi \quad \forall \varphi,\psi \in L^p(\R^N).
\]
In particular,
\begin{equation}\label{eq:char:234}
  h(\varphi) = h^\ast(\varphi,\laps{s} \varphi) = \int_{\R^N} f \varphi + \int_{\R^N} g \laps{s} \varphi \quad \forall \varphi \in C_c^\infty(\R^N).
\end{equation}
Now we define the operator $B_s := (I+\laps{s})^{-1}$, that is for $u \in C_c^\infty(\R^N)$, 
\[
 \mathcal{F}(B_s(u))(\xi) := \frac{1}{1+c|\xi|^s} \mathcal{F}u(\xi).
\]
This operator is a slight variation of the Bessel potential operator. More precisely, it is a multiplier operator with bounded symbol $\frac{1}{1+c|\xi|^s}$ that satisfies Mihlin's and H\"ormander's condition and thus $B_s: L^q(\R^N) \to L^q(\R^N)$ is a linear bounded operator for all $q \in (1,\infty)$, \cite[Theorem 5.2.7.]{G08c}. 
Moreover, $\laps{s} B_s$ is a bounded linear operator from $L^q(\R^N) \to L^q(\R^N)$. Indeed, this follows from the fact that 
\[
 \mathcal{F}(\laps{s} B_s(u))(\xi) = \frac{c|\xi|^s}{1+c|\xi|^s} \mathcal{F}u(\xi).
\]
Again, the symbol $\frac{c|\xi|^s}{1+c|\xi|^s}$ satisfies Mihlin's and H\"ormander's condition and we can apply H\"ormander's theorem, \cite[Theorem 5.2.7.]{G08c}, to conclude the boundedness of $\laps{s} B_s$. From these two results we obtain 
\begin{equation}\label{eq:hBsest}
 \|\laps{s} B_s u\|_{L^q(\R^N)} +\|B_s u\|_{L^q(\R^N)} \aleq \|u\|_{L^q(\R^N)} \quad \forall u \in L^q(\R^N), \, \forall q \in (1,\infty).
\end{equation}
Moreover, 
\begin{equation}\label{eq:hBsdec}
 u = \laps{s} B_s u + B_s u \qquad \forall u \in L^q(\R^N), \quad \forall q \in (1,\infty);
\end{equation}
indeed applying the Fourier transform, \eqref{eq:hBsdec} is equivalent to 
\[
 c|\xi|^2 \frac{1}{1+c|\xi|^s} \mathcal{F}u(\xi) + \frac{1}{1+c|\xi|^s} \mathcal{F}u(\xi) = \mathcal{F} u(\xi).
\]
Applying \eqref{eq:hBsdec} to $g$ we reformulate \eqref{eq:char:234} into
\[
\begin{split}
  h(\varphi) =& \int_{\R^N} f \varphi + \int_{\R^N} \brac{\laps{s} B_s g + B_s g} \laps{s} \varphi\\
   =&\int_{\R^N} (f  +\laps{s} B_s g)\, \varphi + \int_{\R^N} \laps{s} B_s g \laps{s} \varphi\\
\end{split}
\]
and in view of \eqref{eq:hBsest} we have 
\[
 \|f+\laps{s} B_s g\|_{L^{p'}(\R^N)} \aleq  \|h\|_{H^{s,p}(\R^N)^\ast} \, ,
\]
and
\[
 \|B_s g\|_{H^{s,p'}(\R^N)} \aleq  \|h\|_{H^{s,p}(\R^N)^\ast}.
\]
So if we set $f_1 := f+\laps{s} B_s g$ and $f_2 := B_s g$ the claim of \Cref{pr:dualcharglobal} is proven.
\end{proof}

For $p=2$ we also get a local version of \Cref{pr:dualcharglobal}. We restrict our attention to $p = 2$, since for $p \neq 2$ the estimate \eqref{eq:id:est} requires the (to our knowledge unknown) $L^p$-boundary Calder\'{o}n-Zygmund regularity theory for nonlocal differential equations -- for the interior regularity theory cf. \cite{MSY20}.

\begin{proposition}[Identification of $H^{s,2}_{00}(\Omega)^\ast$]\label{pr:id}
Let $\Omega$ be any open bounded set, $s \in (0,1)$,
\[
 h \in H^{s,2}_{00}(\Omega)^\ast
\]
then there exists $u \in H^{s,2}_{00}(\Omega)$ such that for any $\varphi \in C_c^\infty(\Omega)$,
\begin{equation}\label{eq:hs200ast:1}
 h[\varphi] = \int_{\R^N} \laps{s} u\, \laps{s} \varphi . 
\end{equation}
Moreover,
\begin{equation}\label{eq:id:est}
 \|\laps{s} u\|_{L^2(\R^N)} = \|h\|_{H^{s,2}_{00}(\Omega)^\ast}.
\end{equation}
\end{proposition}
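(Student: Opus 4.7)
The plan is to exploit the Hilbert-space structure available when $p=2$ and reduce the statement to the Riesz representation theorem. This is exactly why the paper restricts to $p=2$ at this point, as noted in the paragraph preceding the statement: once one leaves the Hilbertian setting the isometry \eqref{eq:id:est} is no longer automatic.

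First, I would endow $H^{s,2}_{00}(\Omega)$ with the bilinear form
\[
\langle f, g \rangle := \int_{\R^N} \laps{s} f\, \laps{s} g.
\]
By \Cref{la:poincare}, applicable since $\Omega$ is bounded, the induced quadratic form $\|\laps{s} \cdot\|_{L^2(\R^N)}^2$ is equivalent on $H^{s,2}_{00}(\Omega)$ to the full $H^{s,2}$-norm squared; in particular it is positive definite and the expression in \eqref{eq:semi_norm} is a Hilbertian norm. Completeness with respect to this norm is built into the definition of $H^{s,2}_{00}(\Omega)$, so $(H^{s,2}_{00}(\Omega), \langle \cdot, \cdot \rangle)$ is a Hilbert space.

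Second, given $h \in H^{s,2}_{00}(\Omega)^\ast$, the Riesz representation theorem produces a unique $u \in H^{s,2}_{00}(\Omega)$ with
\[
h[v] = \langle u, v \rangle = \int_{\R^N} \laps{s} u\, \laps{s} v \qquad \text{for all } v \in H^{s,2}_{00}(\Omega),
\]
together with the isometric identity $\|\laps{s} u\|_{L^2(\R^N)} = \|h\|_{H^{s,2}_{00}(\Omega)^\ast}$, which is precisely \eqref{eq:id:est}. Since $C_c^\infty(\Omega) \subset H^{s,2}_{00}(\Omega)$ (and in fact is dense by \Cref{th:density}, though density is not needed for the statement), specializing $v = \varphi$ yields \eqref{eq:hs200ast:1}.

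I do not foresee a real obstacle: the whole argument is a one-line invocation of Riesz once the Hilbert structure and the Poincar\'e-type norm equivalence have been set up. The genuine difficulty signalled by the authors concerns the analogous statement for $p \neq 2$, which would require the presently unavailable $L^p$-boundary Calder\'on--Zygmund theory for nonlocal operators, and is explicitly excluded from this proposition.
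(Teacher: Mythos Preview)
Your proposal is correct and follows exactly the paper's own argument: the paper's proof is the one-line observation that $\|\laps{s}\cdot\|_{L^2(\R^N)}$ is a Hilbert-space norm on $H^{s,2}_{00}(\Omega)$ (via \Cref{la:poincare} and \eqref{eq:semi_norm}) together with the Riesz representation theorem.
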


\begin{proof}
The proof is a direct consequence of the fact that $\|\laps{s} \cdot\|_{L^2(\R^N)}$ defines {a  Hilbert-space norm} on $H^{s,2}_{00}(\Omega)$ (see \eqref{eq:semi_norm}) and the Riesz Representation Theorem. 
\end{proof}

\section{Proof of Convergence, Theorem~\ref{th:convergence}}\label{s:th:conv}

\Cref{th:convergence} is a essentially equivalent of the following statement, which is proven within this section.  Recall that $\Omega\subset \mathbb{R}^N$ is an arbitrary bounded and open set with Lipschitz boundary. In the following we continue to use the following notation, $H^{-s,q}(\Omega) = H^{s,q'}_{00}(\Omega)^{\ast}$.
\begin{theorem}\label{th:convergence2}
Suppose that $h_n,h \in H^{-s,2}(\Omega) = H_{00}^{s,2}(\Omega)^\ast$, for $s \in (0,1]$ and such that 
\begin{itemize}
 \item $h_n, h \geq 0$ in the sense of distributions in $\Omega$, that is
 \[
  h_n[\varphi],\ h[\varphi] \geq 0 \qquad \forall \varphi \in C_c^\infty(\Omega), \, \varphi \geq 0.
 \]
 \item $h_n$ weakly converges to $h$ in $H_{00}^{s,2}(\Omega)^\ast$, that is
\[
  h_n[\varphi] \xrightarrow{n\to \infty} h[\varphi] \qquad \forall \varphi \in H^{s,2}_{00}(\Omega).
 \]
\end{itemize}
Then {$h_n \xrightarrow{n \to \infty} h$} strongly in $H^{-s,q}(\Omega) = H^{s,q'}_{00}(\Omega)^{\ast}$ for any $1 < q < 2$, i.e.
\begin{equation}\label{eq:conv2:conclusion}
\sup_{\varphi \in C_c^\infty(\Omega),\, \|\varphi\|_{H^{s,q'}(\R^N)}\leq 1} \left |h_n[\varphi]-h[\varphi]\right | \xrightarrow{n \to \infty} 0.
\end{equation}
\end{theorem}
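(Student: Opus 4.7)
The plan is to combine the identification from \Cref{pr:id} with the positive-measure structure from \Cref{measurefact} and the interior Sobolev bound \Cref{pr:sob2}, following the interior/boundary dichotomy described in the introduction. Using \Cref{pr:id}, represent $h_n[\varphi]=\int_{\R^N}\laps{s}u_n\,\laps{s}\varphi$ with $u_n\in H^{s,2}_{00}(\Omega)$ and $\|\laps{s}u_n\|_{L^2(\R^N)}=\|h_n\|_{H^{-s,2}}$, and analogously for $h$; the weak convergence $h_n\rightharpoonup h$ becomes $u_n\rightharpoonup u$ in $H^{s,2}_{00}(\Omega)$, hence $u_n\to u$ strongly in $L^2(\R^N)$ by \Cref{pr:rellich}. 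By \Cref{measurefact}, each $h_n$ is a positive Radon measure on $\Omega$, and testing against a nonnegative cutoff $\eta\in C_c^\infty(\Omega)$ with $\eta\equiv 1$ on a chosen $\Omega'\subsubset\Omega$ shows that $h_n(\Omega')\leq h_n[\eta]\aleq 1$, which, in the language of $u_n$, yields the uniform bound $\|\laps{2s}u_n\|_{L^1(\Omega'')}\aleq 1$ for any $\Omega''\subsubset\Omega$---exactly the input required for \Cref{pr:sob2}.

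For the interior estimate, fix nested $\Omega_0\subsubset\Omega_1\subsubset\Omega_2\subsubset\Omega$ and pick $t\in(s,2s)$ close to $s$. \Cref{pr:sob2} gives a uniform bound on $\|\laps{t}(u_n-u)\|_{L^q(\Omega_1)}$ in the full admissible range. Combined with the global $L^2$-bound on $\laps{s}(u_n-u)$, \Cref{pr:rellich}(1) applied to the sequence $\laps{s}(u_n-u)$ (whose $\laps{t-s}$-image is locally $L^q$-bounded) extracts strong convergence $\laps{s}u_n\to\laps{s}u$ in $L^q(\Omega_0)$. For a test function $\varphi\in C_c^\infty(\Omega_0)$ with $\|\laps{s}\varphi\|_{L^{q'}(\R^N)}\leq 1$, splitting $\int_{\R^N}\laps{s}(u_n-u)\laps{s}\varphi$ into integrals over $\Omega_0$ and $\R^N\setminus\Omega_0$, and applying H\"older to the first piece together with direct integral-kernel estimates on the second (using that $\varphi$ has compact support inside $\Omega_0$ so $\laps{s}\varphi$ decays away from it), yields $|(h_n-h)[\varphi]|\to 0$ uniformly. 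If the admissible range of $q$ coming out of \Cref{pr:sob2} does not immediately cover all of $(1,2)$, a bootstrap interpolation with the $L^2$-bound extends it.

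To upgrade from $\Omega_0$ to all of $\Omega$, use the Lipschitz regularity of $\partial\Omega$: pick a cutoff $\eta_\delta\in C_c^\infty(\Omega)$ with $\eta_\delta\equiv 1$ on $\Omega_0$, decompose any $\varphi\in C_c^\infty(\Omega)$ as $\eta_\delta\varphi+(1-\eta_\delta)\varphi$, and handle the first piece by the interior step. The second piece requires a fractional Hardy-type estimate showing $\|\laps{s}[(1-\eta_\delta)\varphi]\|_{L^{q'}(\R^N)}\to 0$ as $\delta\to 0$ uniformly in $\|\laps{s}\varphi\|_{L^{q'}(\R^N)}\leq 1$; combined with the uniform $H^{-s,2}$-bound on $h_n-h$ this renders the boundary contribution arbitrarily small. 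The main obstacle I expect is precisely this boundary step: the non-locality of $\laps{s}$ produces a commutator $[\eta_\delta,\laps{s}]$ which is not small pointwise, and reconciling this commutator with only Lipschitz regularity of $\partial\Omega$---while ensuring that the measures $h_n$ do not concentrate pathologically near $\partial\Omega$---is the genuinely new technical difficulty of the fractional setting, absent from Brezis' classical argument.
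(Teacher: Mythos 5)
Your interior step is essentially the paper's: use \Cref{pr:id} to represent $h_n=\laps{2s}u_n$, observe via \Cref{measurefact} that the restriction of $\laps{2s}u_n$ to any $\Omega''\subsubset\Omega$ is a nonnegative measure of uniformly bounded mass, feed this into \Cref{pr:sob2} to raise the order of differentiability on $u_n$ by a small $\delta$ locally in $L^r$, and then apply Rellich (\Cref{pr:rellich}) plus Vitali to extract strong $L^q_{loc}$ convergence of $\laps{s}u_n$ on a compact subset. One technical point you gloss over: the measure bound gives control of the total variation of $\laps{2s}u_n\rss\Omega''$, but \Cref{pr:sob2} requires the term $\|\laps{2s}f\|_{L^1(\Omega_2)}$ as an honest $L^1$-norm; the paper closes this gap by mollifying the equation $\laps{2s}g_n=\mu_n-f_n$ and passing to the limit after the Sobolev estimate. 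That is minor, though, and fixable.

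The genuine gap is in your boundary step, and it is a wrong approach rather than merely a hard estimate. You propose to split the \emph{test function} as $\varphi=\eta_\delta\varphi+(1-\eta_\delta)\varphi$ and to use a ``fractional Hardy-type estimate'' of the form
\[
\sup_{\varphi\in C_c^\infty(\Omega),\ \|\laps{s}\varphi\|_{L^{q'}(\R^N)}\le 1}\ \|\laps{s}\!\left[(1-\eta_\delta)\varphi\right]\|_{L^{q'}(\R^N)}\ \xrightarrow{\delta\to 0}\ 0.
\]
This is false, and already fails in the classical case $s=1$ (so the non-locality of $\laps{s}$ and the commutator $[\eta_\delta,\laps{s}]$ are not the actual obstruction, contrary to your diagnosis). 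For fixed $\delta$ choose a unit-norm $\varphi$ supported in the thin collar $\Omega\setminus\{\eta_\delta=1\}$; then $(1-\eta_\delta)\varphi$ is comparable to $\varphi$ and the left-hand side stays bounded away from zero. If such an estimate were true, $H^{s,q'}_{00}(\Omega)$ would coincide with the closure of $C_c^\infty(\Omega')$ for $\Omega'\subsubset\Omega$, which is absurd. So your boundary contribution cannot be made ``arbitrarily small'' this way, and your proof does not close.

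The paper's boundary argument (\Cref{th:uptobd}) avoids decomposing the test function at all. By duality, the target norm $\|h_n-h\|_{H^{s,q'}_{00}(\Omega)^\ast}$ is $\|\laps{s}(u_n-u)\|_{L^q(\R^N)}$, and the paper splits \emph{this integral over $x$} into three regions: the exterior $\Omega_{o,\eps}$, the interior $\Omega_{i,\eps}$, and a boundary collar $\Omega_{\partial,\eps}$. On the collar one simply uses H\"older together with the uniform $L^2$-bound,
\[
\|\laps{s}(u_n-u)\|_{L^q(\Omega_{\partial,\eps})}\ \le\ |\Omega_{\partial,\eps}|^{\frac1q-\frac12}\,\sup_n\|\laps{s}(u_n-u)\|_{L^2(\R^N)},
\]
and the factor $|\Omega_{\partial,\eps}|^{1/q-1/2}\to 0$ as $\eps\to 0$ --- this is the only place the Lipschitz hypothesis on $\partial\Omega$ enters, and only to ensure $|\Omega_{\partial,\eps}|\to 0$. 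The exterior piece is handled by Young's inequality since $u_n-u$ vanishes off $\Omega$, and the interior piece is reduced to the local convergence from \Cref{th:convergenceint} (via a dualizing $F_n\in C_c^\infty(\Omega_{i,\eps})$ and a commutator estimate). The key insight you are missing is to control $\laps{s}(u_n-u)$ near the boundary using the uniform $H^{s,2}$-bound and the smallness of the collar's Lebesgue measure (here is exactly where $q<2$ is exploited), rather than trying to truncate the test functions away from $\partial\Omega$.
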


\Cref{th:convergence2} is obtained as a consequence of \Cref{th:convergenceint} which shows the strong convergence of the functionals when localized  away from the boundary $\partial \Omega$. We further prove that the latter result holds not only for $H_{00}^{s,2}(\Omega)^\ast$ but also for $H_{00}^{s,p}(\Omega)^\ast$ with $p\in (1,+\infty)$. Finally, this  together with  \Cref{th:uptobd} allows to extend the result and obtain \Cref{th:convergence2}.

{
\begin{example}
Let us remark that the statement of \Cref{th:convergence2} is false in general without the assumption $h_n, h \geq 0$. E.g., let $\Omega := B(0,1)$, and take any $f_n \in L^2(\mathbb{R}^N)$, $\supp f_n \subset B(0,1)$ with $f_n$ converging to some $f$ weakly in $L^2(\R^N)$, but no subsequence of $f_n$ converges strongly to $f$ in any $L^q(\R^N)$, $1 \leq q \leq 2$. Take for instance, the usual example for weak but not strong convergence \[f_n(x) := \eta(x) \cos(nx_1)\cos(nx_2)\ldots \cos(nx_N),\]
where $\eta \in C_c^\infty(\Omega)$ is a standard radial bump function.

Let $s \in (0,1]$ and set $h_n := \laps{s} f_n$. For any $q \in (1,2)$, 
\[
 \liminf_{n \to \infty} \sup_{\varphi \in C_c^\infty(\Omega),\, \|\varphi\|_{H^{s,q'}(\R^N)}\leq 1}  |h_n[\varphi]-h[\varphi]| \geq \liminf_{n \to \infty} \|f_n-f\|_{L^q(\R^N)} > 0.
\]
That is, the conclusion \eqref{eq:conv2:conclusion} of \Cref{th:convergence2} fails -- even though $h_n$ satisfies all assumptions of \Cref{th:convergence2} besides $h_n[\varphi] \geq 0$. 
\end{example}
}

\subsection{Local strong convergence}
We prove first the interior result which holds for any $p \in (1,\infty)$.
\begin{theorem}\label{th:convergenceint}
Suppose that $h_n,h \in H^{-s,p}(\Omega) = H_{00}^{s,p'}(\Omega)^\ast$, $s \in (0,1]$, $p \in (1,\infty)$ such that 
\begin{itemize}
 \item $h_n, h \geq 0$ in the sense of distributions in $\Omega$. 
  \vspace{.2cm} 
 \item $h_n$ weakly converges to $h$ in $H_{00}^{s,p'}(\Omega)^\ast$. 
\end{itemize}

For an arbitrary $\eta \in C_c^\infty(\Omega)$, consider
\[
 \tilde{h}_{n,\eta}[\varphi] := h_n[\eta \varphi],
\]
and
\[
 \tilde{h}_{\eta}[\varphi] := h[\eta \varphi].
\]
Then, for any open and bounded $\Omega' \subset \R^N$ (not necessarily contained in $\Omega$) we have strong convergence of $\tilde{h}_{n,\eta}$ to $\tilde{h}_{\eta}$ in $H^{-s,q}(\Omega') = H^{s,q'}_{00}(\Omega')^*$, for any $q \in (1,p)$; that is
\begin{equation}\label{eq:interiorconv}
 \|\tilde{h}_{n,\eta}-\tilde{h}_{\eta}\|_{H^{s,q'}_{00}(\Omega')^\ast} \xrightarrow{n \to \infty} 0.
\end{equation}

\end{theorem}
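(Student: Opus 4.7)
The plan is to extend Brezis' measure-theoretic strategy to the fractional setting in three stages: establish a uniform total-variation bound for the localized sequence, pass to a weak-$\ast$ limit of measures, and then upgrade this to norm convergence in $H^{-s,q}(\Omega')$. Weak convergence in $H^{-s,p}(\Omega)$ yields $\sup_n\|h_n\|_{H^{-s,p}(\Omega)}<\infty$ by Banach--Steinhaus, and since $h_n\ge 0$ as a distribution it is a non-negative Radon measure on $\Omega$. Fixing a non-negative cut-off $\tilde\eta\in C_c^\infty(\Omega)$ with $\tilde\eta\ge|\eta|$ and exploiting positivity gives
\[
 \|\tilde h_{n,\eta}\|_{TV(\R^N)} \;\le\; h_n[\tilde\eta] \;\le\; \|h_n\|_{H^{-s,p}(\Omega)}\,\|\tilde\eta\|_{H^{s,p'}_{00}(\Omega)}\;\aleq\;1,
\]
uniformly in $n$; hence $\tilde h_{n,\eta}$ (and similarly $\tilde h_\eta$) is a signed Radon measure on $\R^N$ with total mass bounded independently of $n$ and support in the compact set $K:=\supp\eta\subsubset\Omega$.

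Next, by Banach--Alaoglu some subsequence $\tilde h_{n_k,\eta}$ converges weakly-$\ast$ to a measure $\mu\in M(K)$. For every $\varphi\in C_c^\infty(\R^N)$ we have $\eta\varphi\in C_c^\infty(\Omega)\subset H^{s,p'}_{00}(\Omega)$, so the weak convergence $h_n\rightharpoonup h$ in $H^{-s,p}(\Omega)$ forces $\tilde h_{n,\eta}[\varphi]=h_n[\eta\varphi]\to h[\eta\varphi]=\tilde h_\eta[\varphi]$. Thus $\mu=\tilde h_\eta$, and a standard subsequence argument upgrades this to the whole sequence: $\tilde h_{n,\eta}\rightharpoonup^\ast\tilde h_\eta$ in $M(K)$. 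Setting $\nu_n:=\tilde h_{n,\eta}-\tilde h_\eta$ produces a sequence of signed measures supported in $K$, uniformly bounded in total variation and (by the same duality estimate) in $H^{-s,p}(\Omega')$, with $\nu_n\rightharpoonup^\ast 0$.

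The final stage is to turn the weak-$\ast$ measure convergence into norm convergence. I first work at a small exponent $q_0\in(1,\min(p,N/(N-s)))$, for which $sq_0'>N$; the fractional Morrey embedding then gives $H^{s,q_0'}_{00}(\Omega')\hookrightarrow C^{0,\alpha}(\R^N)$ with $\alpha=s-N/q_0'>0$, and restriction to the compact set $K$ makes this embedding compact by Arzel\`a--Ascoli. Schauder's theorem dualizes it to a compact inclusion $M(K)\hookrightarrow H^{-s,q_0}(\Omega')$, so $\{\nu_n\}$ is relatively compact in $H^{-s,q_0}(\Omega')$ and uniqueness of the weak-$\ast$ limit forces $\nu_n\to 0$ there. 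For an arbitrary $q\in(1,p)$, I pick $q_0\in(1,\min(q,N/(N-s)))$ and $\theta\in(0,1)$ with $1/q=\theta/q_0+(1-\theta)/p$, and invoke the interpolation inequality
\[
 \|\nu_n\|_{H^{-s,q}(\Omega')} \;\aleq\; \|\nu_n\|_{H^{-s,q_0}(\Omega')}^{\theta}\,\|\nu_n\|_{H^{-s,p}(\Omega')}^{1-\theta} \;\xrightarrow{n\to\infty}\; 0.
\]
I expect the main obstacle to be justifying this interpolation step on an arbitrary open $\Omega'$ without boundary regularity; I would circumvent it either by transferring the inequality from $\R^N$ (using $\supp\nu_n\subset K$ and monotonicity of the dual norms in the domain) or by writing it out directly from the Riesz-potential representation of the dual norm developed in \Cref{s:char} combined with H\"older's inequality in Lebesgue spaces, rather than invoking interpolation theorems that require a Lipschitz boundary.
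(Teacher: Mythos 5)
Your argument is correct, and it is a genuinely different (and shorter) route than the paper's. The paper proves this interior convergence by first invoking the dual-space characterization (\Cref{pr:dualcharglobal}) to split $\tilde{h}_{n,\eta}=f_n+\laps{s}g_n$ with $f_n\in L^p(\R^N)$ and $g_n\in H^{s,p}(\R^N)$; convergence of the $f_n$-part is handled by cutoffs, Young's inequality and Rellich, while convergence of the $g_n$-part uses the positivity to write $\laps{2s}g_n=\mu_n-f_n$ with $\mu_n$ a non-negative measure, mollifies, applies the limiting Sobolev embedding \Cref{pr:sob2} to upgrade $\laps{s}g_n$ to $\laps{s+\delta}g_n$ locally in $L^r$, then Rellich and Vitali's lemma. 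You instead work directly with the measure structure: the total-variation bound $\|\tilde h_{n,\eta}\|_{TV}\le h_n[\tilde\eta]\aleq 1$ is exactly the fractional analogue of Brezis' observation, and you obtain compactness not from the Sobolev-type machinery of \Cref{s:sob} but from the H\"older/Morrey embedding $H^{s,q_0'}(\R^N)\hookrightarrow C^{0,\alpha}(\R^N)$ (valid since $sq_0'>N$), Arzel\`a--Ascoli on the compact $K=\supp\eta$, and Schauder's theorem, which yields compactness of $M(K)\hookrightarrow H^{-s,q_0}(\Omega')$ and hence $\nu_n\to 0$ there. The interpolation step to reach arbitrary $q\in(1,p)$ is the only place requiring care, and your proposed workaround is the right one: since $H^{s,q'}_{00}(\Omega')\hookrightarrow H^{s,q'}(\R^N)$ is isometric, dual norms satisfy $\|\nu\|_{(H^{s,q'}_{00}(\Omega'))^\ast}\le\|\nu\|_{(H^{s,q'}(\R^N))^\ast}$, and complex interpolation of the full-space scale $H^{-s,r}(\R^N)=(H^{s,r'}(\R^N))^\ast$ is classical, so no boundary regularity of $\Omega'$ is needed. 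In exchange for these external but standard inputs (Morrey embedding for Bessel potential spaces, Schauder, complex interpolation on $\R^N$) you avoid both the dual characterization and the limiting Sobolev embedding, which the paper develops partly for their own sake; a further small advantage is that your argument treats $s=1$ on the same footing as $s\in(0,1)$, whereas \Cref{pr:sob2} is stated only for $s\in(0,1)$.
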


\begin{proof}[Proof of \Cref{th:convergenceint}] 
First note that the result need only to be proven for a subsequence $\tilde{h}_{n,\eta}$, and then the convergence for the complete sequence is given by the uniqueness of the limit $h$. 

Let $\varphi\in H_{00}^{s,p'}(\Omega)$ be arbitrary, then we observe 
\begin{equation}\label{eq:prvintconv1}
 \|\laps{s} \brac{\eta \varphi} \|_{L^{p'}(\R^N)} \aleq C(\eta)\, \brac{\|\varphi\|_{L^{p'}(\R^N)} + \|\laps{s} \varphi\|_{L^{p'}(\R^N)}}.
\end{equation}
Indeed \eqref{eq:prvintconv1} is essentially a consequence of the fractional Leibniz Rule; see \cite[Theorem 7.1]{LS18}. Since $\eta \in C_c^\infty(\Omega)$, we have 
\[
 \|\laps{t} \eta \|_{L^r(\R^N)} = C(\eta,t,r) < \infty, \quad \forall t \geq 0, r \in (1,\infty).
\]
Thus, the fractional Leibniz Rule together with Sobolev embedding, \Cref{la:sobolev}, implies that for any $t < s$ with $N-(s-t)p > 0$,
\[
\begin{aligned}
\|\laps{s} & \brac{\eta \varphi} - \eta \laps{s} \varphi - \varphi \laps{s} \eta\|_{L^p(\R^N)}\\
 \quad\leq& C(\eta,p,s,t)\, \|\laps{t}\varphi\|_{L^{\frac{Np}{N-(s-t)p}}(\R^N)} 
\leq C(\eta,p,s,t)\, \|\laps{s}\varphi\|_{L^{p}(\R^N)} . 
 \end{aligned}
\]
From this we readily obtain \eqref{eq:prvintconv1}.

It follows from \eqref{eq:prvintconv1},  that if $\varphi \in H^{s,p}(\R^N)$ then $\eta \varphi \in H^{s,p}_{00}(\Omega)$, with the estimate
\[
 \|\eta \varphi \|_{H^{s,p'}_{00}(\Omega)} \equiv \|\eta \varphi \|_{L^{p'}(\R^N)} + \|\laps{s} (\eta \varphi) \|_{L^{p'}(\R^N)}\leq C(\eta) \|\varphi\|_{H^{s,p'}(\R^N)}.
\]
Given that $$|\tilde{h}_n[\varphi] |= |h_n[\eta \varphi]|\leq \|h_n\|_{{(H_{00}^{s,p'}(\Omega))^\ast}}\|\eta \varphi \|_{H^{s,p'}_{00}(\Omega)},$$
we observe 
\[
 \sup_{n} \|\tilde{h}_n\|_{(H^{s,p'}(\R^N))^\ast} \leq C(\eta)\, \sup_n \|h_n\|_{{(H_{00}^{s,p'}(\Omega))^\ast}} < \infty.
\]
In particular we have that $\tilde{h}_n$ is uniformly bounded in $H^{s,p'}(\R^N)^\ast$, and then $\tilde{h}_n\rightharpoonup\tilde{h}$ in $H^{s,p'}(\R^N)^\ast$ up to taking a subsequence.

By \Cref{pr:dualcharglobal}, there are representatives $f_n \in L^p(\R^N)$, $\laps{s}g_n \in L^p(\R^N)$  such that 
\[
 \tilde{h}_n[\varphi] = \int_{\R^N} f_n \varphi + \int_{\R^N} \laps{s} g_n\, \laps{s} \varphi \qquad \forall \varphi \in C_c^\infty(\R^N).
\]
Moreover,
\[
 \sup_{n} \|f_n\|_{L^p(\R^N)} + \sup_{n} \|g_n\|_{H^{s,p}(\R^N)}\aleq C(\eta)\, \sup_n \|h_n\|_{{(H_{00}^{s,p'}(\Omega))^\ast}} < \infty.
\]
By reflexivity of the incumbent spaces, up to taking a subsequence, we have the following weak convergences
\[
 f_n  \rightharpoonup f \quad \text{ in $L^p(\R^N)$},
\]
\[
 g_n  \rightharpoonup g \quad \text{ in $H^{s,p}(\R^N)$},
\]
\[
 \laps{s} g_n  \rightharpoonup \laps{s} g \quad \text{ in $L^{p}(\R^N)$}.
\]
Since $\tilde{h}_n$ weakly converges to $\tilde{h}$, we have
\[
 \tilde{h}[\varphi] = \int_{\R^N} f \varphi + \int_{\R^N} \laps{s} g\, \laps{s} \varphi \qquad \forall \varphi \in C_c^\infty(\R^N).
\]
By the compactness result of Rellich--Kondrachov, \Cref{pr:rellich}, we have strong convergence
\[
\begin{split}
	 g_n &\rightarrow g \qquad\quad \text{ in $L^{p}_{loc}(\R^N)$}\\\label{eq:fn}
 \lapms{s} f_n &\rightarrow \lapms{s} f \qquad \text{ in $L^{p}_{loc}(\R^N)$},
\end{split}
\]
also along subsequences. Here, $\lapms{s} f$ is the Riesz potential defined in \Cref{s:prelims}.

Let $\Omega' \subset \R^N$ be an arbitrary open and bounded set, and let $q < p$.

We will show in the following that 
\begin{equation}\label{eq:int:goalf}
\sup_{\varphi \in C_c^\infty(\Omega'),\, \|\varphi\|_{H^{s,q'}(\R^N)} \leq 1} \int_{\R^N} (f_n - f) \varphi \xrightarrow{n \to \infty} 0
\end{equation}
and
\begin{equation}\label{eq:int:goalg}
\sup_{\varphi \in C_c^\infty(\Omega'),\, \|\varphi\|_{H^{s,q'}(\R^N)} \leq 1} \int_{\R^N} \laps{s} (g_n -g)\laps{s} \varphi \xrightarrow{n \to \infty} 0.
\end{equation}

Together with the density of $C_c^\infty(\Omega')$-functions in $H^{s,q'}_{00}(\Omega')$, \Cref{th:density}, the limits \eqref{eq:int:goalf} and \eqref{eq:int:goalg} readily imply \eqref{eq:interiorconv} up to a subsequence. As remarked above, by the uniqueness of the weak limit, this establishes \eqref{eq:interiorconv}.

\underline{Strong convergence of $f_n$: Proof of \eqref{eq:int:goalf}}. 

Let $B(0,R_1)$ be a ball of radius $R_1>0$, containing $\Omega'$, centered at the origin, and let $R > 20R_1$. 

We define now the bump functions $\tilde{\lambda}, \xi,$ and $\lambda_R$ as follows. Let $\tilde{\lambda} \in C_c^\infty(B(0,2))$, $\tilde{\lambda} \equiv 1$ in $B(0,1)$ be arbitrary. 
Set $\xi (\cdot):= \tilde{\lambda}(\cdot/(2R_1)) \in C_c^\infty(B(0,4R_1))$, $\xi \equiv 1$ in $B(0,2R_1)$, and $\lambda_R := \tilde{\lambda}(\cdot / R) \in C_c^\infty(B(0,2R))$, $\lambda_R \equiv 1$ in $B(0,2R)$. 

Let $\varphi \in C_c^\infty(\Omega')$, then
\begin{equation}\label{eq:fnconvergence:split1st}
\begin{split}
\int (f_n -f) \varphi 
 =&\int \xi(f_n -f) \varphi \\
 =&c\int \xi(f_n -f) \lapms{s}\laps{s} \varphi \\
 =&c\int \lapms{s}(\xi(f_n -f)) \laps{s} \varphi. \\
 \end{split}
\end{equation}
The first equation in \eqref{eq:fnconvergence:split1st} is because $\xi \varphi = \varphi$, since $\xi \equiv 1$ in $B(0,2R_1)$ and $\varphi \in C_c^\infty(\Omega')$. The second equation in \eqref{eq:fnconvergence:split1st} follows from the basic properties of the Riesz potential, \Cref{la:basicprops}, and that $\lapms{s}\laps{s} \varphi  = c\varphi$. The third equation in \eqref{eq:fnconvergence:split1st} is the ``integration by parts'' in \Cref{la:basicprops}.

Thus, we find
\begin{equation}\label{eq:fnconvergence:split}
\begin{split}
 &|\int (f_n -f) \varphi| \\
 \aleq& \left |\int \lambda_{R} \lapms{s} (\xi (f_n -f)) \laps{s} \varphi\right | +\left | \int (1-\lambda_{R}) \lapms{s} (\xi (f_n -f)) \laps{s} \varphi \right |. 
 \end{split}
\end{equation}

For the first term on the right hand side of \eqref{eq:fnconvergence:split} we have
\[
 \left |\int \lambda_{R} \lapms{s} (\xi (f_n -f)) \laps{s} \varphi \right |\aleq \|\laps{s} \varphi\|_{L^{q'}(\mathbb{R}^n)}\, \|\lambda_{R} \lapms{s} (\xi (f_n -f))\|_{L^q(B_R(\Omega'))}.
\]
Observe that $\xi(f_n -f)$ converges weakly to zero in $L^p(\R^N)$. Since $q < p$, by the compact support of $\xi$ we conclude that $\xi(f_n -f)$ converges weakly to zero in $L^q(\R^N)$. By Rellich's theorem, \Cref{pr:rellich}, we conclude that for every fixed $R > 0$
\begin{equation}\label{eq:fnconvergence:1}
 \limsup_{n \to \infty} \sup_{\varphi \in C_c^\infty(\Omega'); \|\varphi\|_{H^{s,q'}(\R^N)} \leq 1}  \left |\int \lambda_{R} \lapms{s} (\xi (f_n -f)) \laps{s} \varphi \right |=0.
\end{equation}

For the second term on the right hand side  of \eqref{eq:fnconvergence:split}, observe that whenever $q_1,q_2 \in (1,\infty)$ with $\frac{1}{q_1} + \frac{1}{q_2} = 1$ then
\[
  \Big |\int (1-\lambda_{R}) \lapms{s} (\xi (f_n -f)) \laps{s} \varphi  \Big|\aleq \|\lapms{s} (\xi(f_n-f))\|_{L^{q_1}(\R^N)}\, \|(1-\lambda_{R}) \laps{s} \varphi\|_{L^{q_2}{(\R^N)}}.
\]
Choose $q_1$ such that $q_1 < \frac{Np}{N-sp}$ (if $sp \geq N$, this condition becomes $q_1 < \infty$), and $q_1 > \frac{N}{N-s}$. From Sobolev embedding, here in the form of \Cref{la:sobolev}, we obtain
\[
 \|\lapms{s} (\xi(f_n-f))\|_{L^{q_1}(\R^N)} \aleq \|\xi(f_n-f)\|_{L^{\frac{N q_1}{N+sq_1}}(B(0,4R_1))} \aleq \|f_n-f\|_{L^p(\R^N)}.
\]
In particular, since $f_n  \rightharpoonup f$  in $L^p(\R^N)$, we have 
\begin{equation}\label{eq:Boundfnf}
	 \sup_{n}\|\lapms{s} (\xi(f_n-f))\|_{L^{q_1}(\R^N)} <+\infty.
\end{equation}

For any $R>0$ sufficiently large, by the disjoint support of $1-\lambda_R$ and $\varphi$, we have
\[
 |\brac{(1-\lambda_{R}) \laps{s} \varphi}(x)| \aleq \int_{|x-y|\geq  R} |x-y|^{-N-s} |\varphi(y)|\, dy.
\]
Combining Young's inequality for convolutions, $\frac{1}{q_1} + \frac{1}{q_2} = 1$,  the fact that the support of $\varphi$ is in $\Omega'$, and H\"{o}lder's inequality, we  find
\begin{align*}
 \|(1-\lambda_{R}) \laps{s} \varphi\|_{L^{q_2}{(\R^N)}} &\aleq \left(\int \left(\int_{|x-y|\geq  R} |x-y|^{-N-s} dy\right)^{q_2}d x\right)^{1/q_2}\|\varphi\|_{L^1(\R^N)}\\
 &\aleq R^{(-N-s) +\frac{N}{q_2}} \|\varphi\|_{L^1(\R^N)}\\
 &\aleq
 R^{-s -\frac{N}{q_1}} \|\varphi\|_{L^1(\R^N)} \\
 &\aleq C(\Omega')\, R^{-s-\frac{N}{q_1}}\, \|\varphi\|_{L^{q'}(\Omega')},
\end{align*}
with $C(\Omega'):=|\Omega'|^{1/q}$.

That is, for all sufficiently large $R > 0$, we have for some fixed $\sigma > 0$ that
\begin{equation}\label{eq:fnconvergence:2}
\sup_{n} \sup_{\varphi \in C_c^\infty(\Omega'); \|\varphi\|_{H^{s,q'}(\R^N)} \leq 1} \left |\int (1-\lambda_{R}) \lapms{s} (\xi (f_n -f)) \laps{s} \varphi \right |\aleq C(\Omega') R^{-\sigma}.
\end{equation}
The convergence of $f_n$, namely \eqref{eq:int:goalf}, follows now from \eqref{eq:fnconvergence:split} together with \eqref{eq:fnconvergence:2} and \eqref{eq:fnconvergence:1} { by} taking first $n \to \infty$ and then letting $R \to \infty$.

\underline{Convergence of $g_n$: Proof of \eqref{eq:int:goalg}}
{
Let $K_3 \subsubset \R^N$ with $\Omega' \subset \subset K_3$. Take again a bump function $\xi \in C_c^\infty(K_3)$ with $\xi \equiv 1$ in a neighborhood of $\Omega'$.

Then for any $\varphi \in C_c^\infty(\Omega')$
\[
\begin{split}
 &\int_{\R^N} \brac{ \laps{s} g_n - \laps{s} g} \laps{s} \varphi\\
&= \int_{\R^N} \brac{ \laps{s} g_n - \laps{s} g} \xi \laps{s} \varphi \\
&\quad+\int_{\R^N} \brac{ \laps{s} g_n - \laps{s} g} (1-\xi) \laps{s} \varphi\\
&= \int_{\R^N} \brac{ \laps{s} g_n - \laps{s} g} \xi \laps{s} \varphi+\int_{\R^N} \brac{ g_n - g} \laps{s} \brac{(1-\xi) \laps{s} \varphi} . 
 \end{split}
\]
By the disjoint support of $\varphi$ and $(1-\xi)$ we can argue as for $f$, \eqref{eq:fnconvergence:split1st}, to conclude that 
\[
 \sup_{\varphi \in C_c^\infty(\Omega'),\, \|\varphi\|_{H^{s,q'}(\R^N)} \leq 1} \int_{\R^N}  \brac{ g_n - g} \laps{s} \brac{(1-\xi) \laps{s} \varphi} \xrightarrow{n \to \infty} 0.
\]
Thus, in order to establish \eqref{eq:int:goalg} it remains to show
\begin{equation}\label{eq:int:goalg2}
 \|\laps{s} g_n - \laps{s} g\|_{L^q(K_3)} \xrightarrow{n \to \infty} 0,
\end{equation}
which we do in the following.}

Since $\tilde{h}_n$ is a positive distribution, by \Cref{measurefact}, there exist {non-negative} Radon measures $\mu_n$ such that \[
 \int \varphi\, d\mu_n = \tilde{h}_n[\varphi] = \int_{\R^N} f_n \varphi + \int_{\R^N} \laps{s} g_n \laps{s} \varphi \qquad \forall \varphi \in C_c^\infty(\R^N).
\]
Observe that we can rewrite this as the distributional equation
\begin{equation}\label{eq:pdegn1}
 \laps{2s}g_n = \mu_n - f_n \quad \mbox{in } \R^N.
\end{equation}
We can mollify the PDE \eqref{eq:pdegn1} with the usual convolution kernel $\nu_\eps=\eps^{-n} \nu(\cdot/\eps)$ where $\nu \in C_c^\infty(B_1(0))$, $\nu \geq 0$ everywhere, normalized so that $\int \nu = \int \nu_\eps= 1$. Then,
\begin{equation}\label{eq:pdegn2}
 \laps{2s}(g_n\ast \nu_\eps) = \mu_n\ast \nu_\eps - f_n\ast \nu_\eps \quad \mbox{in } \R^N.
\end{equation}
Since the cutoff function $\eta$ used in the definition of $\tilde{h}_n$ clearly belongs to $H^{s,p'}_{00}(\Omega)$, we find that we can test $\tilde{h}_n$ with the constant function $1$, i.e. $\tilde{h}_n[1]$ is well defined. Then,
\[
 \sup_{n} \mu_n(\R^N) = \sup_{n} |\tilde{h}_n[1]| = \sup_{n} |h_n[\eta]| \leq \sup_n \|h_n\|_{(H^{s,p'}_{00}(\Omega))^\ast}\, \|\eta\|_{H^{s,p'}(\R^N)} < \infty.
\]
Thus, we obtain from Fubini's theorem
\[
 \sup_{\eps > 0} \sup_{n} \|\mu_n \ast \nu_\eps\|_{L^1(\R^N)} < \infty.
\]
Since moroever $f_n$ is bounded in $L^p(\R^N)$, for any bounded open set $K$ the sequence $f_n$ is bounded in $L^1(K)$. 

Take $K$ a bounded open set but large enough such that $\Omega' \subsubset K_3 \subsubset K$ where $K_3$ is from above, \eqref{eq:int:goalg2}.

Then we have
\[
 \sup_{\eps \in (0,1)} \sup_{n} \|f_n \ast \nu_\eps\|_{L^1(K)} < C(K) < \infty.
\]
That is, from \eqref{eq:pdegn2}, we have
\[
 \sup_{\eps \in (0,1)} \sup_{n}  \|\laps{2s}(g_n\ast \nu_\eps) \|_{L^1(K)} < C(K) < \infty.
\]
Moreover, since $\sup_{n} \|\laps{s}g_n\|_{L^p(\R^N)} < \infty$ we find that in particular,
\[
 \sup_{\eps \in (0,1)} \sup_{n}  \|\laps{s}(g_n\ast \nu_\eps) \|_{L^p(\R^N)} < \infty.
\]
Take an open set $K_1 \subsubset K$ such that $K_3 \subsubset K_1$. 
From \Cref{pr:sob2} we find that for any $\delta > 0$, $r \in (1,\infty)$ such that $s+\delta < 2s$, $s+\delta < n$ and $1 < r < \frac{N}{N-(s+\delta)}$,
\[
 \sup_{n} \sup_{\eps \in (0,1)} \|\laps{s+\delta} (g_n\ast \nu_\eps)\|_{L^r(K_1)} < C(K_1,K) < \infty.
\]
Taking $K_2 \subsubset K_1$ an open set such that $K_3 \subsubset K_2$
By weak compactness, letting $\eps \to 0$ we thus find 
\[
 \sup_{n} \|\laps{s+\delta} g_n\|_{L^r(K_2)} <  \infty.
\]
By Rellich's theorem in the form of \Cref{pr:rellich} (recall that $\sup_{n} \|g_n\|_{H^{s,p}(\R^N)} < \infty$), we then find that $\laps{s} g_n$ is strongly convergent in $L^r(K_3)$ for any $1 < r < \frac{N}{N-(s+\delta)}$. 
In particular we have that $\laps{s} g_n$ is strongly convergent in $L^1(K_3)$. Since $\laps{s} g_n$ is on the other hand weakly convergent to $\laps{s} g$ in $L^p(\R^N)$, by Vitali's theorem in form of \Cref{la:conv}, we find that $\|\laps{s} g_n-\laps{s} g\|_{L^q(K_3)} \xrightarrow{n \to \infty} 0$ whenever $1 \leq q < p$. That is  \eqref{eq:int:goalg2} is established. The proof is concluded.
\end{proof}

\subsection{Up to the boundary}
Once we have \Cref{th:convergenceint} which treats convergence in the interior of $\Omega$, the convergence up to the boundary is comparatively simpler. Further, note that although Theorem \ref{th:convergenceint} is proven for a general $p$, in the following result we consider only the $p=2$ case.

\begin{theorem}\label{th:uptobd}
Let $q \in (1,2)$, $s \in (0,1)$, and  $h_n, h \in H^{s,2}_{00}(\Omega)^\ast$ be such that 
\[
 \sup_{n} \|h_n\|_{H^{s,2}_{00}(\Omega)^\ast} < \infty,
\]
and assume
\begin{itemize}
 \item we have \emph{locally} strong convergence of $h_n$ to $h$ in {$H^{s,q'}_{00}(\Omega)^\ast$} in the following sense: for all $\eta \in C_c^\infty(\Omega)$ we have
\[
 \|\tilde{h}_{n,\eta}-\tilde{h}_{\eta}\|_{H^{s,q'}_{00}(\Omega)^\ast} \xrightarrow{k \to \infty} 0,
\]
where
\[
 \tilde{h}_{n,\eta}[\varphi] := h_n[\eta \varphi], \quad \text{and}\quad  \tilde{h}_{\eta}[\varphi] := h[\eta \varphi].
\]
\end{itemize}
Then we have \emph{global} strong convergence of $h_n$ to $h$ in $(H^{s,q'}_{00}(\Omega))^\ast$, namely
\[
 \|\tilde{h}_n-\tilde{h}\|_{H^{s,q'}_{00}(\Omega)^\ast} \xrightarrow{k \to \infty} 0.
\]
\end{theorem}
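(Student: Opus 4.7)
The plan is a cutoff-plus-localization argument. Because $\partial \Omega$ is Lipschitz, for each $\delta > 0$ one can pick $\eta_\delta \in C_c^\infty(\Omega)$ with $0 \le \eta_\delta \le 1$, $\eta_\delta \equiv 1$ on $\Omega_{2\delta} := \{x \in \Omega : \dist(x,\partial\Omega) > 2\delta\}$, supported in $\Omega$, and with $|\nabla^k \eta_\delta| \aleq \delta^{-k}$. For any $\varphi \in H^{s,q'}_{00}(\Omega)$ with $\|\varphi\|_{H^{s,q'}_{00}} \le 1$, I split $\varphi = \eta_\delta \varphi + (1 - \eta_\delta)\varphi$ to obtain
\[
(h_n - h)[\varphi] \;=\; \bigl(\tilde{h}_{n,\eta_\delta} - \tilde{h}_{\eta_\delta}\bigr)[\varphi] \;+\; (h_n - h)\bigl[(1 - \eta_\delta)\varphi\bigr].
\]

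The interior summand is controlled directly by the hypothesis: it is majorized by $\|\tilde{h}_{n,\eta_\delta} - \tilde{h}_{\eta_\delta}\|_{H^{s,q'}_{00}(\Omega)^\ast}$, which for fixed $\delta$ tends to zero as $n \to \infty$, uniformly in $\varphi$ from the unit ball of $H^{s,q'}_{00}(\Omega)$. The boundary-layer term I would estimate by duality against the stronger $H^{s,2}_{00}$-norm. Since $\Omega$ is bounded and $q' > 2$, the continuous embedding $H^{s,q'}_{00}(\Omega) \hookrightarrow H^{s,2}_{00}(\Omega)$ (which follows from \Cref{lem:Hineq} applied with the roles of $p,q$ reversed) combined with the uniform bound $\sup_n \|h_n\|_{H^{s,2}_{00}^\ast} \le M$ yields
\[
\bigl|(h_n - h)\bigl[(1 - \eta_\delta)\varphi\bigr]\bigr| \;\le\; 2M\,\|(1 - \eta_\delta)\varphi\|_{H^{s,2}_{00}(\Omega)}.
\]

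The hard part, and the main obstacle I anticipate, is therefore the uniform smallness of the boundary layer,
\[
(\star)\qquad \sup_{\|\varphi\|_{H^{s,q'}_{00}} \le 1}\|(1 - \eta_\delta)\varphi\|_{H^{s,2}_{00}(\Omega)} \;\longrightarrow\; 0 \quad\text{as } \delta \to 0.
\]
To prove $(\star)$ I plan to expand via the fractional product rule from \Cref{la:basicprops}(d),
\[
\laps{s}\!\bigl((1-\eta_\delta)\varphi\bigr) \;=\; (1-\eta_\delta)\laps{s}\varphi \;-\; \varphi\,\laps{s}\eta_\delta \;-\; c\!\int_{\R^N}\!\frac{(\eta_\delta(x)-\eta_\delta(y))(\varphi(x)-\varphi(y))}{|x-y|^{N+s}}\,dy,
\]
and estimate each summand in $L^2(\R^N)$. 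Each term is supported in, or weighted toward, the boundary strip $\Omega \setminus \Omega_{2\delta}$, whose Lebesgue measure is $O(\delta)$ by Lipschitz regularity. H\"older's inequality with the exponents $2$ and $q' > 2$ then produces a factor $|\Omega \setminus \Omega_{2\delta}|^{1/2 - 1/q'} = O\bigl(\delta^{1/2 - 1/q'}\bigr)$ against $\|\laps{s}\varphi\|_{L^{q'}} \le 1$; the contributions involving $\laps{s}\eta_\delta$ and the singular remainder come, after careful manipulation, with $\dist(\cdot,\partial\Omega)^{-s}$-type weights that are tamed by the fractional Hardy inequality, and restricting to the boundary layer combined with the extra integrability provided by $q' > 2$ again yields a positive power of $\delta$. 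This delivers $(\star)$ with a quantitative rate $O(\delta^\sigma)$ for some $\sigma = \sigma(s,q',N) > 0$.

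Once $(\star)$ is in hand, the conclusion is a standard $\varepsilon/2$-argument: given $\varepsilon > 0$, first select $\delta$ so that the boundary-layer term is below $\varepsilon/2$ uniformly in $n$ and in $\varphi$ from the unit ball; then select $n$ so large that the interior term is below $\varepsilon/2$. Taking the supremum over $\varphi \in C_c^\infty(\Omega)$ with $\|\varphi\|_{H^{s,q'}_{00}} \le 1$ and invoking the density result \Cref{th:density} promotes this bound to all $\varphi$ in the unit ball, giving $\|h_n - h\|_{H^{s,q'}_{00}(\Omega)^\ast} \to 0$.
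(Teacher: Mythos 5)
Your decomposition---splitting the test function $\varphi = \eta_\delta\varphi + (1-\eta_\delta)\varphi$ and showing that the boundary-layer multiplier $(1-\eta_\delta)\cdot$ has small operator norm from $H^{s,q'}_{00}(\Omega)$ to $H^{s,2}_{00}(\Omega)$---is genuinely different from the paper's route. The paper first uses the Riesz representation of \Cref{pr:id} to replace $h_n, h$ by their potentials $u_n, u \in H^{s,2}_{00}(\Omega)$, and then splits $\R^N$ (not $\varphi$) into a boundary strip, an inner region, and an outer region, estimating $\|\laps{s}(u_n-u)\|_{L^q}$ piece by piece. Crucially, the paper's outer estimate carries a factor $\eps^{-s}$ that blows up as $\eps \to 0$, so it must send $n\to\infty$ \emph{first} and $\eps\to 0$ second---it is not an $\varepsilon/2$-argument. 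Your plan, if it succeeded, would be cleaner in that respect because $(\star)$ is a statement only about the spaces and would give genuine uniform-in-$n$ smallness.

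The gap is that $(\star)$, the heart of your argument, is not actually proved. The claim that ``each term is supported in, or weighted toward, the boundary strip $\Omega\setminus\Omega_{2\delta}$'' is not correct for the first term $(1-\eta_\delta)\laps{s}\varphi$: its support is $\R^N\setminus\Omega_{2\delta}$, which includes all of $\R^N\setminus\Omega$. Your H\"older estimate $|\Omega\setminus\Omega_{2\delta}|^{1/2-1/q'}\|\laps{s}\varphi\|_{L^{q'}}$ controls only the piece on the thin strip inside $\Omega$; the exterior piece is $\|\laps{s}\varphi\|_{L^2(\R^N\setminus\Omega)}$, which is $\delta$-independent and not small. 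In fact, a direct computation for $x\in\R^N\setminus\Omega$ shows
\[
\laps{s}\bigl[(1-\eta_\delta)\varphi\bigr](x) = c\int_{\Omega}\frac{(1-\eta_\delta(y))\varphi(y)}{|x-y|^{N+s}}\,dy,
\]
so there is large cancellation \emph{between} your first term and the singular commutator term on $\R^N\setminus\Omega$---cancellation that a term-by-term bound of the fractional Leibniz expansion cannot see. Similarly, the Hardy-inequality step is vague: the singularity of $\laps{s}\eta_\delta$ lives along the transition annulus $\Omega_\delta\setminus\Omega_{2\delta}$ (where $|\laps{s}\eta_\delta|\aleq\delta^{-s}$), not along $\partial\Omega$, and the fractional Hardy inequality by itself does not produce the advertised positive power of $\delta$; one would need to quantify how the mass of $\varphi$ near that annulus decays, which is exactly the content one is trying to prove. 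So the proposal identifies the right structural difficulty but leaves the central estimate $(\star)$ unproven; the paper sidesteps it entirely by going through the $L^2$ Riesz potential $u_n$ and exploiting that $u_n - u$ vanishes identically outside $\Omega$.
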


\begin{proof}[Proof of \Cref{th:uptobd}]
\underline{Part 0: Preliminaries on weak convergence.}

Set 
\[
 \Lambda := \sup_{n} \|h_n\|_{H^{s,2}_{00}(\Omega)^\ast} + \|h\|_{H^{s,2}_{00}(\Omega)^\ast} < \infty.
\]
From \Cref{pr:id} we obtain $u_n, u \in H^{s,2}_{00}(\Omega)$ such that 
\[
h_n[\varphi] = \int_{\R^N} \laps{s} u_n\, \laps{s} \varphi \qquad \forall \varphi \in C_c^\infty(\Omega)
\]
and
\[
h[\varphi] = \int_{\R^N} \laps{s} u\, \laps{s} \varphi \qquad \forall \varphi \in C_c^\infty(\Omega).
\]
Moreover we have the estimate
\[
\sup_{n} \|\laps{s} u_n\|_{L^2(\R^N)} +\|\laps{s} u\|_{L^2(\R^N)} \aleq \Lambda.
\]
In particular $\laps{s} u_n$ has a weakly converging subsequence with respect to $L^2(\R^N)$ to a $v \in L^2(\R^N)$. We claim 
\begin{equation}\label{eq:vequ2334}
 v = \laps{s} u.
\end{equation}
Indeed, by the weak convergence $h_n \to h$ we obtain that
\begin{equation}\label{eq:vmlapsu:22}
 \int_{\R^N} \brac{v- \laps{s} u}\, \laps{s} \varphi = 0 \quad \forall \varphi \in C_c^\infty(\R^N).
\end{equation}
Since both $v, \laps{s} u \in L^2(\R^N)$, by duality and density of $C_c^\infty(\R^N)$ in $L^2(\R^N)$ there exists $\psi \in C_c^\infty(\R^N)$ with $\|\psi\|_{L^2(\R^N)} \leq 1$ such that 
\[
 \|v-\laps{s} u\|_{L^2(\R^N)} \leq 2 \int_{\R^N} (v-\laps{s} u) \psi.
\]
Take $\rho_0 > 0$ such that $B(0,\rho_0) \supset \supp \psi$.

Let $\eta \in C_c^\infty(B(0,2))$ with $\eta \equiv 1$ in $B(0,1)$, and set $\chi_\rho := \eta(x/\rho)$. Then the fact that $\psi = \laps{s} \lapms{s} \psi$, \Cref{la:basicprops}, and \eqref{eq:vmlapsu:22} imply
\[
\begin{split}
 \int_{\R^N} (v-\laps{s} u) \psi =& \int_{\R^N} (v-\laps{s} u) \laps{s} \lapms{s} \psi\\
 =& \int_{\R^N} (v-\laps{s} u) \laps{s} \brac{\chi_\rho \lapms{s} \psi} \\
 &\quad+ \int_{\R^N} (v-\laps{s} u) \laps{s} \brac{(1-\chi_\rho) \lapms{s} \psi}\\
 \overset{\eqref{eq:vmlapsu:22}}{=}& \int_{\R^N} (v-\laps{s} u) \laps{s} \brac{(1-\chi_\rho) \lapms{s} \psi}\\
 =& \int_{\R^N} (v-\laps{s} u) \chi_{2\rho}\laps{s} \brac{(1-\chi_\rho) \lapms{s} \psi}\\
 &+\int_{\R^N} (v-\laps{s} u) (1-\chi_{2\rho})\laps{s} \brac{(1-\chi_\rho) \lapms{s} \psi}.
 \end{split}
\]
By \cite[Proposition 2.4.]{MSY20} for any $\rho > \rho_0$,
\[
 \|(1-\chi_{2\rho})\laps{s} \brac{(1-\chi_\rho) \lapms{s} \psi}\|_{L^2(\R^N)} \aleq \|\psi\|_{L^2(\R^N)} \leq 1.
\]
and there exists $p < 2$ such that 
\[
 \|\chi_{2\rho}\laps{s} \brac{(1-\chi_\rho) \lapms{s} \psi}\|_{L^2(\R^N)} \aleq \rho^{\frac{n}{p}-\frac{n}{2}}\|\psi\|_{L^p(\R^N)} \leq \rho^{\frac{n}{p}-\frac{n}{2}}.
\]
Observe that the dependency with respect to $\rho$ can be obtained from \cite[Proposition 2.4.]{MSY20} by scaling.
So that 
\[
\begin{split}
 &\left |\int_{\R^N} (v-\laps{s} u) (1-\chi_{2\rho})\laps{s} \brac{(1-\chi_\rho) \lapms{s} \psi} \right |\\
 \aleq &\|v-\laps{s} u\|_{L^2(\R^N \backslash B(2\rho))} \xrightarrow{\rho \to \infty} 0.
 \end{split}
\]
In the last step we used that $v-\laps{s} u\in L^2(\R^N)$ and absolute continuity of the integral.

Moreover, 
\[
 \left |  \int_{\R^N} (v-\laps{s} u) \chi_{2\rho}\laps{s} \brac{(1-\chi_\rho) \lapms{s} \psi}\right | \aleq \|v-\laps{s} u\|_{L^2(\R^N)} \rho^{\frac{n}{p}-\frac{n}{2}} \xrightarrow{\rho \to \infty} 0.
\]
This implies that 
\[
 \|v-\laps{s} u\|_{L^2(\R^N)} = 0.
\]
That is, \eqref{eq:vequ2334} is established and we have
\[
 \laps{s} u_n  \rightharpoonup \laps{s} u \quad \text{weakly in $L^2(\R^N)$}.
\]
\underline{Part 1: strong convergence of $u_n$}

From Poincar\'e inequality (observe that $u_n, u \equiv 0$ in $\R^N \backslash \Omega$) we have 
\[
 \sup_{n \in \N} \|u_n-u\|_{L^2(\R^N)} \aleq \sup_{n \in \N}\|\laps{s} (u_n-u)\|_{L^2(\R^N)} < \infty.
\]
Since $\laps{s} (u_n - u)  \rightharpoonup 0$ weakly in $L^2(\R^N)$ and $u_n - u$ is uniformly bounded in $H^{s,2}(\R^N)$ we obtain from Rellich's theorem, \Cref{pr:rellich}, $u_n - u \xrightarrow{n \to \infty} 0$ strongly in $L^2_{loc}(\R^N)$. Since $u_n = u = 0$ in $\R^N \backslash \Omega$ this implies $u_n - u \xrightarrow{n \to \infty} 0$ strongly in $L^2(\R^N)$. Using yet again the compact support of $u_n-u$, we find
\begin{equation}\label{eq:rellich}
 \|u_n-u\|_{L^q(\R^N)} \leq C(\Omega) \|u_n-u\|_{L^2(\R^N)} \xrightarrow{n \to \infty} 0.
\end{equation}

\underline{Part 2: strong convergence of $\laps{s} u_n$}
It remains to prove
\begin{equation}\label{eq:lapsunstrongconv}
 \|\laps{s} u_n - \laps{s} u\|_{L^q(\R^N)} \xrightarrow{n \to \infty} 0.
\end{equation}
{We will reduce the proof of \eqref{eq:lapsunstrongconv} to estimates in three subsets of $\R^n$
\begin{itemize}
 \item the ``outer part'' $\Omega_{o,\eps}$, a subset of $\R^n \backslash \Omega$ with positive distance to $\partial \Omega$
 \item the ``inner part'' $\Omega_{i,\eps}$, a subset of $\Omega$ with positive distance to $\partial \Omega$
 \item the ``close to the boundary part'' $\Omega_{\partial, \eps}$, the set of points close to the boundary $\partial \Omega$.
\end{itemize}
}
{\underline{Step 0: Reducing to three regimes.} Our goal of this preliminary step is to reduce the situation to the estimate of the three sets mentioned above, namely \eqref{eq:sc:split} below.}

Let $\eps \in (0,1)$. Since $\Omega$ is open and bounded, the set $\partial \Omega$ is compact. Set 
\[
 B_\eps(\partial \Omega) := \{x \in \R^N: \dist(x,\partial \Omega) < \eps\}.
\]
Since $\Omega$ is bounded, $B_\eps(\partial \Omega)$ is bounded. Moreover, by the dominated convergence theorem\footnote{This is the only place in this entire work where we use that $\partial \Omega$ is Lipschitz.}
\[
 |B_\eps(\partial \Omega)| \xrightarrow{\eps \to 0}  0.
\]

We call $B_\eps(\partial \Omega)$ the boundary regime $\Omega_{\partial,\eps} := B_{\eps} (\partial \Omega)$.
Moreover we define the outer part $\Omega_{o,\eps} := \R^N \backslash B_{\eps} (\Omega)$ and the interior part $\Omega_{i,\eps} := \Omega \backslash \Omega_{\partial, \eps}$.
Then 
\begin{equation}\label{eq:sc:split}
\begin{split}
 \|\laps{s} u_n - \laps{s} u\|_{L^q(\R^N)} \leq& \|\laps{s} u_n - \laps{s} u\|_{L^q(\Omega_{o,\eps})}\\
 &+ \|\laps{s} u_n - \laps{s} u\|_{L^q(\Omega_{\partial,\eps})} \\
 &+ \|\laps{s} u_n - \laps{s} u\|_{L^q(\Omega_{i,\eps})}.
 \end{split}
\end{equation}
\underline{Step 1:} first we treat for the \underline{boundary part}. Since $q < 2$,
\begin{equation}\label{eq:sc:split:1}
\|\laps{s} u_n - \laps{s} u\|_{L^q(\Omega_{\partial,\eps})} \leq |\Omega_{\partial,\eps}|^{\frac{1}{q}-\frac{1}{2}}   \Lambda \xrightarrow{\eps \to 0} 0.
\end{equation}
\underline{Step 2:} Secondly, we treat the \underline{outer part}:
Since $u_n-u$ vanishes in $\R^N \backslash \Omega$, for $x \in \Omega_{o,\eps}$ we have
\[
 \laps{s} (u_n-u)(x) = c\int_{\R^N} \underbrace{|x-y|^{-s-n}}_{\ageq \eps^{-s-n}} (u_n(y)-u(y))\, dy,
\]
so by Young's estimate for convolutions,
\begin{equation}\label{eq:sc:split:2}
\begin{split}
 \|\laps{s} (u_n-u)\|_{L^q(\Omega_{o,\eps})} \aleq &\|u_n - u\|_{L^q(\R^N)}\, \int_{|z| > \eps} |z|^{-n-s}\, dz\\
 =&C\, \|u_n - u\|_{L^q(\R^N)}\, \eps^{-s}\\
\end{split}
\end{equation}
\underline{Step 3:} Lastly for the \underline{inner part} we use the assumption of local convergence. For our fixed $\eps > 0$ let $\eta_\eps \in C_c^\infty(\Omega)$ with $\eta_\eps \equiv 1$ in $\Omega_{i,\eps/2}$ (i.e. in an $\eps$-neighborhood of $\Omega_{i,\eps})$. Then, by duality, for any $n \in \N$ there exist $F_n \in C_c^\infty(\Omega_{i,\eps})$ with $\|F_n\|_{L^{q'}(\R^N)} \leq 2$ such that
\[
 \|\laps{s} (u_n-u)\|_{L^q(\Omega_{i,\eps})} \leq \int_{\R^N} \laps{s} (u_n-u)\, F_n.
\]
Since $F_n$ is smooth we may write ($\lapms{s} = (-\lap)^{-\frac{s}{2}}$ is the Riesz potential)
\[
 F_n = \laps{s} \lapms{s} F_n = \laps{s} \brac{(\eta_\eps)^2 \lapms{s} F_n} +\laps{s} \brac{(1-(\eta_\eps)^2) \lapms{s} F_n}
\]
Let us use the commutator notation,
\[
 [\laps{s} ,f] (g) := \laps{s} (fg) -f\laps{s} g.
\]
Using again $\lapms{s} \laps{s} g= g$, \Cref{la:basicprops}, we have
\[
 \laps{s} \brac{\eta_\eps \lapms{s} F_n} = \eta_\eps F_n + [\laps{s} ,\eta_\eps](\lapms{s} F_n).
\]
By Coifman--McIntosh--Meyer type commutator estimates, here in the form of \cite[Theorem 6.1.]{LS18}, we observe
\[
 \begin{split}
 & \|[\laps{s} ,\eta_\eps] (\lapms{s} F_n)\|_{L^{q'}(\R^N)}\\
 \aleq&\|\eta_\eps F_n\|_{L^{q'}(\R^N)} + \brac{1+\|\nabla \eta_\eps\|_{L^\infty(\R^N)}}\, \|\lapms{s} F_n\|_{L^{q'}(\R^N)}
 \aleq C(\eps) \|F_n\|_{L^{q'}(\R^N)}
 \end{split}
\]
where in the last step we used the estimate\footnote{this estimate holds whenever $\frac{Nq'}{N+sq'} > 1$, equivalently $\frac{N}{q} > s$. When $N \geq 2$, $q < 2$ this is true whenever $s \in (0,1)$. For $N=1$, $q$ needs to be taken small enough. But this is not a problem since again, strong convergence at $L^{q_0}$ ($q_0 > 1$ small) and weak convergence at $L^2$ implies strong convergence for any $L^q$, $q \in (1,2)$ by Vitali's theorem.}
\[
 \|\lapms{s} F_n\|_{L^{q'}(\R^N)} \aleq \|F_n\|_{L^{\frac{Nq'}{N+sq'}}(\R^N)} = \|F_n\|_{L^{\frac{nq'}{n+sq'}}(\Omega_{i,\eps})} \aleq \|F_n\|_{L^{q'}(\R^N)}
\]
That is
\[
\sup_{n} \|\laps{s} \brac{\eta_\eps \lapms{s} F_n}\|_{L^{q'}(\R^N)} \leq 2\, C(\eps,\Omega).
\]
By the relationship of $u_n, u$ to $h_n$, $h$ we have
\[
\begin{split}
 &\int_{\R^N} \laps{s} (u_n-u)\, \laps{s} \brac{(\eta_\eps)^2 \lapms{s} F_n} \\
 =& (h_n-h)[\eta_\eps \brac{\eta_\eps \lapms{s} F_n}] = (\tilde{h}_{n,\eta_\eps}-\tilde{h}_{\eta_\eps})[\eta_\eps \lapms{s} F_n] 
\end{split}
 \]
and thus for any fixed $\eps > 0$, by the local strong convergence of $h_n$ to $h$
\[
 \int_{\R^N} \laps{s} (u_n-u)\, \laps{s} \brac{(\eta_\eps)^2 \lapms{s} F_n} \aleq \|\tilde{h}_{n,\eta_\eps}-\tilde{h}_{n,\eta_\eps}\|_{H^{s,q'}_{00}(\Omega)^\ast} \xrightarrow{n \to \infty} 0.
\]
It remains to estimate 
\[
\begin{split}
 &\int_{\R^N} \laps{s} (u_n-u)\, \laps{s} \brac{(1-(\eta_\eps)^2) \lapms{s} F_n}\\
 =&\int_{\R^N} (u_n-u)\, \laps{2s} \brac{(1-(\eta_\eps)^2) \lapms{s} F_n}\\
 =&\int_{\Omega} (u_n-u)\, \laps{2s} \brac{(1-(\eta_\eps)^2) \lapms{s} F_n}\\
 \aleq &\|u_n-u\|_{L^2(\R^N)} \left \|\laps{2s} \brac{(1-(\eta_\eps)^2) \lapms{s} F_n} \right \|_{L^2(\Omega)}.
\end{split}
 \]
Set $\chi_\eps := 1-(\eta_\eps)^2$. That is,
\[
G(x) :=  \brac{(1-(\eta_\eps)^2) \lapms{s} F_n }(x)= \int_{\R^N} |x-y|^{s-N} \chi_\eps(x)\, F_n(y)\, dy.
\]
Observe that $\chi_\eps$ and $F_n$ have disjoint support, so we may choose yet another another cutoff function $\tilde{\eta}_\eps \in C_c^\infty(\Omega_{i,\frac{3}{4} \eps})$ with $\tilde{\eta}_{\eps} \equiv 1$ in $\Omega_{i,\eps}$ (that is: $F_n = F_n \tilde{\eta}_\eps$. Then we have for
\[
 k(x,y) := |x-y|^{s-N} \chi_\eps(x)\, \tilde{\eta}_{\eps}(y)
\]
that 
\[
 G(x) = \int_{\R^N} k(x,y)\, F_n(y)\, dy.
\]
Moreover, since $|z|^{s-N} \in L^p(\R^N \backslash B_r(0))$ for any $r > 0$ and $p>\frac{N}{N-s}$,  we find by Young's inequality for integral operators that 
\[
 \|G\|_{L^p(\R^N)} \leq \|F_n\|_{L^1(\R^N)} \leq C(\Omega,\eps)\|F_n\|_{L^2(\Omega)} \leq 2C(\Omega,\eps).
\]
Moreover, $k(x,y)$ is smooth and computing the second derivative with respect to $x$ (notice $\nabla \chi_\eps(x)$ has compact support) we find that (for $s \in (0,2)$)
\[
 \frac{\partial^2}{\partial{x_j}\partial{x_i}} k(x,y) \in L^1 \cap L^\infty(\R^N \times \R^N), \quad\text{ for all }\quad 1\leq i,j\leq N. 
\]
So we have 
\[
 \left\|\frac{\partial^2G}{\partial{x_j}\partial{x_i}} \right\|_{L^p(\R^N)} \aleq \|F_n\|_{L^1(\R^N)} \aleq 2C(\Omega,\eps),\quad\text{ for all }\quad 1\leq i,j\leq N.
\]
That is $G \in W^{2,p}(\R^N) \hookrightarrow H^{s,p}(\R^N)$ (Sobolev embedding) for any $s \in (0,2)$. In particular,
\[
 \|\laps{2s} G\|_{L^p(\R^N)} \aleq 2C(\Omega,\eps).
\]
Together we have shown for the interior case
\begin{equation}\label{eq:sc:split:3}
 \|\laps{s} (u_n-u)\|_{L^q(\Omega_{i,\eps})} \leq C(\Omega,\eps)\, \brac{\|u_n-u\|_{L^2(\Omega)} + \|\tilde{h}_{n,\eta_\eps}-\tilde{h}_{n,\eta_\eps}\|_{H^{s,q'}_{00}(\Omega)^\ast}}
\end{equation}

Plugging this all together, namely \eqref{eq:sc:split:1}, \eqref{eq:sc:split:2}, \eqref{eq:sc:split:3} into \eqref{eq:sc:split}, we have shown that for any $\eps > 0$,
\[
\begin{split}
 \|\laps{s} u_n - \laps{s} u\|_{L^q(\R^N)} \leq C(\Omega)\, \|u_n - u\|_{L^q(\R^N)}\, \eps^{-s}+ C(\Omega)\, \eps^{\frac{1}{q}-\frac{1}{2}}  \Lambda\\
 + C(\Omega,\eps)\, \brac{\|u_n-u\|_{L^2(\Omega)} + \|\tilde{h}_{n,\eta_\eps}-\tilde{h}_{n,\eta_\eps}\|_{H^{s,q'}_{00}(\Omega)^\ast}}
 \end{split}
\]
In view of the locally strong convergence of $h$, Rellich as in \eqref{eq:rellich}, we let $n \to \infty$ and obtain that for any $\eps > 0$,
\[
\begin{split}
 \lim_{n \to \infty} \|\laps{s} u_n - \laps{s} u\|_{L^q(\R^N)} \leq& C(\Omega)\, \eps^{\frac{1}{q}-\frac{1}{2}}  \Lambda\\
 \end{split}
\]
This holds for any $\eps > 0$ and thus for $\eps \to 0$ (since $q < 2$),
\[
 \lim_{n \to \infty} \|\laps{s} u_n - \laps{s} u\|_{L^q(\R^N)} =0.
\]
\eqref{eq:lapsunstrongconv} is established, and the proof of \Cref{th:uptobd} is complete. 
\end{proof}

\section{Mosco Convergence in the Fractional Sobolev Setting}\label{sec:Mosco}

Mosco convergence \cite{mosco1969convergence,mosco1967approximation} is a set convergence concept with useful application in several areas of mathematics. The definition is 
appropriate to the study of problems involving moving convex sets in reflexive Banach spaces, as we now explain. In particular, it has made possible to study stability properties of variational inequalities and associated problems (see \cite{MenaldiR2021} for a modern account\color{black}). One class of those associated problems are the quasi-variational inequalities (QVIs), that can be thought as variational inequalities where the constraint is implicit (in the sense that it is state dependent). QVIs arose initially from the work of Bensoussan and Lions \cite{Bensoussan1974,Lions1973} (see also the monographs \cite{MR673169,Ben1982,Bensoussan1984}) on impulse control problems, and later found application modeling a wide variety of non-convex and non-smooth phenomena in applied sciences. Specifically, areas including superconductivity (\cite{Rodrigues2000,MR1765540,MR2947539,MR2652615,MR3335194,Prigozhin,MR3023771,hintermuller2019dissipative}), continuum mechanics (\cite{Friedman1982,Rodrigues2000,AHR,alphonse2019recent}), 
growth of sandpiles and the determination of rivers/lakes networks (\cite{MR3082292,MR3231973,MR3335194,Prigozhin2012,Prigozhin1986,Prigozhin1994,Prigozhin1996,MR3231973,Prigozhin1994,Prigozhin1996}), 
among others. Further,  a first account on problems with fractional-gradient constraints  can be found  on \cite{rodrigues2019nonlocal}, and  \color{black}a concrete application associated to fractional spaces can be found on~\cite{antil2017fractional}. For a complete and classical account on QVIs, we refer the reader to the text of Baiocchi and Capelo \cite{BaiC1984}.

We now provide an application of the compact embedding results established in the previous section to the convergence of convex sets. This, as stated in the introduction of the paper, can be seen as the extension of the Boccardo/Murat result for fractional Sobolev spaces. We start by introducing the definition of Mosco convergence \cite{mosco1969convergence}:

\begin{definition}[\textsc{Mosco convergence}]\label{definition:MoscoConvergence}
Let $K$ and $K_n$, for each $n\in\mathbb{N}$, be non-empty, closed and convex subsets of  a reflexive Banach space $V$. Then the sequence \textit{$\{K_n\}$ {is said to} converge to $K$ in the sense of Mosco} as $n\rightarrow\infty$, {denoted} by $$K_n\Mosco K,$$ if the following two conditions {are fulfilled}:
\begin{enumerate}[\upshape(I)]
  \item\label{itm:1}  For each $w\in K$, there exists $\{w_{n'}\}$ such that $w_{n'}\in K_{n'}$ for $n'\in \mathbb{N}'\subset \mathbb{N}$ and $w_{n'}\rightarrow w$ in $V$.
  \item\label{itm:2} If $w_n\in K_n$ and $w_n\rightharpoonup w$ in $V$ along a subsequence, then $w\in K$.
\end{enumerate}
\end{definition} 

The notion of Mosco convergence provides the right notion of convergence to establish stability results for variational inequalities and constrained optimization problems. Specifically, we have the following (see \cite{mosco1969convergence} and \cite{MR880369} for a proof).

\begin{theorem}[Mosco]\label{thm:Mosco}
	Let $V$ be a reflexive Banach space, $f\in V'$, and $A:V\to V'$ be linear, bounded, and coercive. Suppose in addition that for each $n\in\mathbb{N}$, $K_n$ and $K$ are convex, closed, and non-empty subsets of $V$, and that
	\begin{equation*}
		K_n\Mosco K.
	\end{equation*}
	Then, the sequence of unique solutions $\{u_n\}$ to
	\begin{equation*}
		\text{ Find } u\in K_n \quad:\quad \langle Au-f,v-u\rangle \geq 0, \quad \text{for all } v\in K_n,
	\end{equation*}
	converges $V$-strongly to $u^*$, the unique solution to
		\begin{equation*}
		\text{ Find } u\in K \quad:\quad \langle Au-f,v-u\rangle \geq 0, \quad \text{for all } v\in K.
	\end{equation*}
\end{theorem}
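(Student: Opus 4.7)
The plan is to follow the classical Lions--Stampacchia / Mosco scheme: produce the solutions by Stampacchia's theorem, obtain a weak limit via a uniform energy bound, identify the limit using both Mosco conditions, and finally promote weak to strong convergence through the coercivity gap estimate.

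\textbf{Step 1: Well-posedness.} Since $A$ is linear, bounded, and coercive, and each $K_n$ and $K$ is a non-empty closed convex subset of the reflexive Banach space $V$, the Lions--Stampacchia theorem provides unique solutions $u_n \in K_n$ and $u^* \in K$ to the respective variational inequalities. Throughout, let $\alpha>0$ denote the coercivity constant of $A$, so that $\langle Aw,w\rangle \geq \alpha \|w\|^2$ for every $w\in V$.

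\textbf{Step 2: Uniform bound and weak subsequential limit.} To bound $\{u_n\}$, fix any $w\in K$; by Mosco condition \eqref{itm:1} there exists $w_n\in K_n$ with $w_n\to w$ strongly in $V$, so $\{w_n\}$ is bounded. Testing the $n$-th VI with $v=w_n$ gives $\langle Au_n,u_n\rangle \leq \langle Au_n,w_n\rangle - \langle f,w_n-u_n\rangle$. Coercivity then yields $\alpha\|u_n\|^2 \leq \|A\|\|u_n\|\|w_n\| + \|f\|_{V'}(\|w_n\|+\|u_n\|)$, whence $\{u_n\}$ is bounded in $V$. By reflexivity, a subsequence (still labeled $u_n$) satisfies $u_n \rightharpoonup \tilde u$ in $V$, and Mosco condition \eqref{itm:2} forces $\tilde u \in K$.

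\textbf{Step 3: Identifying the weak limit as $u^*$.} Given any $v\in K$, pick via \eqref{itm:1} a sequence $v_n\in K_n$ with $v_n\to v$ in $V$. Testing the $n$-th VI with $v=v_n$ produces $\langle Au_n,u_n\rangle \leq \langle Au_n,v_n\rangle - \langle f,v_n-u_n\rangle$. The right-hand side converges to $\langle A\tilde u,v\rangle - \langle f,v-\tilde u\rangle$ because $v_n\to v$ strongly while $Au_n \rightharpoonup A\tilde u$ in $V'$. To bound the left-hand side from below we exploit the coercivity identity
\begin{equation*}
\langle Au_n,u_n\rangle = \langle A(u_n-\tilde u),u_n-\tilde u\rangle + \langle Au_n,\tilde u\rangle + \langle A\tilde u,u_n\rangle - \langle A\tilde u,\tilde u\rangle,
\end{equation*}
whose first term is $\geq \alpha\|u_n-\tilde u\|^2 \geq 0$, and whose remaining terms converge to $\langle A\tilde u,\tilde u\rangle$ by weak convergence of $u_n$ (tested against the fixed functionals $A^{T}\tilde u$ and $A\tilde u$). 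Hence $\liminf_n \langle Au_n,u_n\rangle \geq \langle A\tilde u,\tilde u\rangle$, and passing to the limit in the VI gives $\langle A\tilde u - f,v-\tilde u\rangle \geq 0$ for every $v\in K$. By uniqueness, $\tilde u = u^*$, and a standard subsequence-of-subsequences argument shows the whole sequence $u_n \rightharpoonup u^*$.

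\textbf{Step 4: Upgrading to strong convergence.} This is the main point. Apply \eqref{itm:1} with the specific choice $w = u^*$ to obtain $w_n\in K_n$ with $w_n\to u^*$ in $V$. Testing the $n$-th VI against $w_n$ yields
\begin{equation*}
\langle Au_n,u_n\rangle \leq \langle Au_n,w_n\rangle - \langle f,w_n-u_n\rangle \longrightarrow \langle Au^*,u^*\rangle,
\end{equation*}
since $Au_n\rightharpoonup Au^*$ in $V'$ tested against the strongly convergent $w_n$, and $w_n-u_n \rightharpoonup 0$. Combined with the lower bound from Step 3, we conclude $\langle Au_n,u_n\rangle \to \langle Au^*,u^*\rangle$. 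Feeding this back into the coercivity identity above gives
\begin{equation*}
\alpha \|u_n-u^*\|^2 \leq \langle Au_n,u_n\rangle - \langle Au_n,u^*\rangle - \langle A u^*,u_n\rangle + \langle Au^*,u^*\rangle \longrightarrow 0,
\end{equation*}
which yields $u_n\to u^*$ strongly in $V$. The principal subtlety lies in Steps 3--4: $A$ is not assumed symmetric, so weak lower semicontinuity of the quadratic form $w\mapsto \langle Aw,w\rangle$ must be obtained via the coercivity expansion rather than from convexity, and the strong recovery sequence provided by Mosco condition \eqref{itm:1} with $w=u^*$ is precisely what converts this semicontinuity into equality of the energies and then into strong convergence.
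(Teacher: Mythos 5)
Your proof is correct and complete. Note that the paper does not prove this theorem at all: it is quoted as a known result, with the proof attributed to Mosco's original paper and to the reference cited alongside it, so there is no in-paper argument to compare against. Your argument is exactly the classical one underlying those references: Browder/Hartman--Stampacchia (rather than the Hilbert-space Lions--Stampacchia, a naming quibble only) gives well-posedness; a recovery sequence from condition (I) of the Mosco definition gives the uniform bound; condition (II) together with the coercivity expansion
$\langle Au_n,u_n\rangle=\langle A(u_n-\tilde u),u_n-\tilde u\rangle+\langle Au_n,\tilde u\rangle+\langle A\tilde u,u_n\rangle-\langle A\tilde u,\tilde u\rangle$
identifies the weak limit -- and you are right that this expansion, not convexity, is the correct substitute for weak lower semicontinuity when $A$ is not symmetric; and the recovery sequence at $u^*$ yields the energy convergence that, fed back into coercivity, gives strong convergence. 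The only small mismatch with the paper is that its Definition~\ref{definition:MoscoConvergence}(I) as literally written only provides a recovery \emph{subsequence}; with the standard (full-sequence) formulation, which you implicitly use, your proof stands as written, and otherwise one simply runs your Steps 2--4 along subsequences and concludes for the whole sequence by uniqueness of $u^*$, as you already indicate.
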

It follows that in order to determine stability results for variational inequalities, conditions that guarantee Mosco convergence are of interest. While for some kind of sets, necessary and sufficient conditions are known (see \cite{MR791845}, the application of such to particular examples is an arduous task). On the other hand, there exist some useful (and simple to verify in applications) sufficient conditions such as one arising from the so-called Boccardo-Murat \cite{BM79} result: A direct consequence of the Murat/Brezis compactness.

In the classical setting, the result of Murat, namely that the positive cone of non-negative elements of $H^{-1}(\Omega)$ compactly embeds in $W^{-1,q}(\Omega)$ for $q<2$, implies that Mosco convergence of the sets 
\[
 K(\phi_n) := \left \{v \in H^{1}_{0}(\Omega): v \geq \phi_n \quad \text{a.e.} \right \}
\]
to the set $ K(\phi)$ is {achieved} not only if $\phi_n\to\phi$ in $H_0^1(\Omega)$, but also if $\phi_n\rightharpoonup \phi$ in $W_0^{1,p}(\Omega)$ for $p>2$. In our setting, we prove that our result stating that the positive cone of non-negative elements in $H_{00}^{s,2}(\Omega)^*$ compactly embeds into $H_{00}^{s,q}(\Omega)^*$ for $1\leq q<2$, implies that the sequence of sets 
\[
 K_s(\psi_n) := \left \{v \in H^{s,2}_{00}(\Omega):  v \geq \psi_n \quad \text{a.e.} \right \}
\]
Mosco converges to $K_s(\psi)$  provided that $\psi_n\rightharpoonup \psi$ in $H^{s,p}_{00}(\Omega)$ with $p>2$. This corresponds to the analogous result to the Boccardo-Murat \cite{BM79} one but in the fractional Sobolev framework.

Initially, note the following result for existence of solutions to the class of variational inequalities of interest. 

\begin{lemma}\label{la:existunique}
Let $f \in H^{s,2}_{00}(\Omega)^\ast$ and $\psi \in H^{s,2}_{00}(\Omega)$. There exists a unique $u \in K_s(\psi)$ such that
\begin{equation}\label{eq:qvi}
 \int_{\R^N} \laps{s} u\, \laps{s} (v-u) \geq f[v-u] \quad \forall v \in K_s(\psi).
\end{equation}
\end{lemma}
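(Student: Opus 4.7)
The plan is to reduce the lemma to the classical Lions--Stampacchia theorem on a Hilbert space. Observe first that, by \Cref{la:poincare} and \eqref{eq:semi_norm}, the space $H^{s,2}_{00}(\Omega)$ is a Hilbert space with inner product $(u,v)_s := \int_{\R^N} \laps{s} u\, \laps{s} v$ and with equivalent norm $\|\laps{s}\cdot\|_{L^2(\R^N)}$. The bilinear form $a(u,v) := (u,v)_s$ is therefore continuous and coercive on $H^{s,2}_{00}(\Omega)$, and $f \in H^{s,2}_{00}(\Omega)^{\ast}$ is a bounded linear functional by assumption.

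Next I would verify that the set $K_s(\psi)$ satisfies the three hypotheses of Stampacchia's theorem. Non-emptiness is immediate: $\psi \in H^{s,2}_{00}(\Omega)$ and $\psi \geq \psi$ a.e., so $\psi \in K_s(\psi)$. Convexity follows from the fact that the constraint $v \geq \psi$ a.e.\ is preserved under convex combinations. For closedness, suppose $v_n \in K_s(\psi)$ with $v_n \to v$ in $H^{s,2}_{00}(\Omega)$. Then $v_n \to v$ in $L^2(\R^N)$ (using the embedding built into the norm of $H^{s,2}_{00}(\Omega)$), and after extracting a subsequence that converges a.e.\ we conclude $v \geq \psi$ a.e., i.e.\ $v \in K_s(\psi)$.

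With these three properties, an application of the Stampacchia projection theorem on the Hilbert space $H^{s,2}_{00}(\Omega)$ provides a unique $u \in K_s(\psi)$ satisfying
\[
 a(u, v-u) \geq f[v-u] \qquad \forall v \in K_s(\psi),
\]
which is exactly \eqref{eq:qvi}. Equivalently, one can argue by direct methods: the functional $J(v) := \tfrac{1}{2}\|\laps{s} v\|_{L^2(\R^N)}^2 - f[v]$ is strictly convex, continuous, and coercive on $H^{s,2}_{00}(\Omega)$ (coercivity again using the Poincar\'e-type inequality of \Cref{la:poincare}), hence attains a unique minimum on the non-empty closed convex set $K_s(\psi)$, and the first-order optimality condition for this minimizer yields \eqref{eq:qvi}.

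There is no substantive obstacle here; the only point that requires a small amount of care is the closedness of $K_s(\psi)$, which relies on the compact embedding of $H^{s,2}_{00}(\Omega)$ into $L^2$ (or just continuous embedding plus a subsequence argument) to pass the pointwise inequality $v_n \geq \psi$ to the limit.
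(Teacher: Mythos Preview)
Your proposal is correct and essentially matches the paper's proof: the paper simply observes that the strictly convex energy $\mathcal{E}(u) = \tfrac{1}{2}\|\laps{s} u\|_{L^2(\R^N)}^2 - f[u]$ can be minimized over $K_s(\psi)$ via the direct method, which is exactly your second formulation. Your additional verification that $K_s(\psi)$ is non-empty, convex, and closed, and your alternative phrasing via Lions--Stampacchia, merely spell out details the paper leaves implicit.
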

\begin{proof}
Existence and uniqueness follow from minimizing the strictly convex energy
\[
 \mathcal{E}(u) := \frac{1}{2} \|\laps{s} u\|_{L^2(\R^N)}^2 - f[u] \quad \text{in $K_s(\psi)$}.
\]
The Direct Method of Calculus of Variations is applicable. 
\end{proof}

The next statement is the analogue of \cite[Th\'eor\`eme 1]{BM79}
\begin{theorem}\label{th1}
Let $f \in H^{s,2}_{00}(\Omega)^\ast$ and {consider} a sequence of obstacles $\psi_n \in H^{s,q}_{00}(\Omega)$ weakly converging to $\psi$ in $H^{s,q}_{00}(\Omega)$  for some $q > 2$. Denote by $u_n$, and $u$ the solutions from \Cref{la:existunique} {corresponding to} $K_s(\psi_n)$ and $K_s(\psi)$, respectively. Then
\[
 u_n \xrightarrow{n \to \infty} u \quad \text{strongly in $H^{s,2}_{00}(\Omega)$}.
\]
\end{theorem}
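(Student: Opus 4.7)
The plan is to reduce \Cref{th1} to a direct application of \Cref{thm:Mosco} combined with the Mosco convergence supplied by \Cref{th:Moscoconvergence}. Set $V := H^{s,2}_{00}(\Omega)$, endowed with the Hilbert norm $\|u\|_V := \|\laps{s} u\|_{L^2(\R^N)}$ from \eqref{eq:semi_norm}, and define the linear operator $A : V \to V^\ast$ by $\langle Au, v\rangle := \int_{\R^N} \laps{s} u\, \laps{s} v$. Then $A$ is bounded and coercive with constant $1$, and the variational inequality in \Cref{la:existunique} is precisely the abstract VI from \Cref{thm:Mosco}. Since the hypothesis $\psi_n \rightharpoonup \psi$ in $H^{s,q}_{00}(\Omega)$ with $q>2$ is exactly the hypothesis of \Cref{th:Moscoconvergence}, we obtain $K_s(\psi_n) \Mosco K_s(\psi)$, and \Cref{thm:Mosco} then yields $u_n \to u$ strongly in $V = H^{s,2}_{00}(\Omega)$.

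For completeness I would also unpack the abstract Mosco stability argument in our concrete setting, to make clear where each of conditions~(I) and~(II) of \Cref{definition:MoscoConvergence} enters. Applying (I) to any fixed $w_0 \in K_s(\psi)$ produces $w_n^0 \in K_s(\psi_n)$ with $w_n^0 \to w_0$ strongly in $V$; testing the VI for $u_n$ against $w_n^0$ and using Young's inequality gives a uniform $V$-bound on $\{u_n\}$. A weakly convergent subsequence $u_n \rightharpoonup u^\ast$ in $V$ then satisfies $u^\ast \in K_s(\psi)$ by (II). For arbitrary $w \in K_s(\psi)$ another application of (I) yields $w_n \to w$ strongly in $V$ with $w_n \in K_s(\psi_n)$; passing to the limit in
\[
\int_{\R^N} \laps{s} u_n\, \laps{s}(w_n - u_n) \geq f[w_n - u_n]
\]
via the strong-weak pairing and the weak lower semicontinuity of $\|\laps{s} \cdot\|_{L^2}^2$ shows that $u^\ast$ solves the VI on $K_s(\psi)$; hence $u^\ast = u$ by the uniqueness in \Cref{la:existunique}, and the whole sequence converges weakly.

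Strong convergence is then obtained by a standard recovery-sequence trick: applying~(I) with $w = u$ produces $\tilde w_n \in K_s(\psi_n)$ with $\tilde w_n \to u$ strongly in $V$, and testing the VI for $u_n$ against $\tilde w_n$ yields $\limsup_{n\to\infty} \|\laps{s} u_n\|_{L^2}^2 \leq \|\laps{s} u\|_{L^2}^2$; combined with $u_n \rightharpoonup u$ in the Hilbert space $V$, this upgrades weak to norm convergence. I do not anticipate any real obstacle in this argument, since the entire technical content of \Cref{th1} is concentrated in \Cref{th:Moscoconvergence}, where the fractional Murat/Brezis compactness (\Cref{th:convergence}) is the essential ingredient.
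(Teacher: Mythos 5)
Your reduction is circular. In the paper, \Cref{th:Moscoconvergence} is not an independently established fact: it is re-established as \Cref{cor:Mosco}, and the proof of condition~(I) of that corollary \emph{invokes \Cref{th1}} (the recovery sequence for a given $w\in K_s(\psi)$ is taken to be the sequence $u_n$ of VI solutions with $f := \laps{2s}w$, and it converges strongly \emph{because of} \Cref{th1}). So invoking \Cref{th:Moscoconvergence} to prove \Cref{th1} runs the logical arrow backwards. Your closing sentence even inverts the paper's dependency: it is \Cref{th1} that directly absorbs the fractional Murat/Brezis compactness (\Cref{th:convergence}), and \Cref{th:Moscoconvergence} is then a corollary of it, not a prerequisite.

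The step that genuinely fails if one tries to prove \Cref{th:Moscoconvergence} from scratch is condition~(I) of Mosco convergence. Given $w \in K_s(\psi)$, you need $w_n \in K_s(\psi_n)$ with $w_n \to w$ \emph{strongly} in $H^{s,2}_{00}(\Omega)$; the obvious candidate $w_n := w + (\psi_n-\psi)$ (or its variant with a positive part) would require $\psi_n \to \psi$ strongly in $H^{s,2}_{00}(\Omega)$, whereas the hypothesis only gives \emph{weak} convergence in $H^{s,q}_{00}(\Omega)$. Bridging that gap is precisely what the Boccardo--Murat scheme does, and it requires proving the VI stability \emph{first}: test \eqref{eq:qvi} with $v=u_n+\phi$, $\phi\geq 0$, to see that $\laps{2s}u_n - f$ is a nonnegative distribution; apply \Cref{th:convergence} to obtain strong $L^p$-convergence of $\laps{s}u_n$ for $p<2$; then test with $v=\psi_n+\tilde u-\psi$ and pair the strongly convergent $\laps{s}(u_n-\tilde u)$ in $L^{q'}$ against the $L^q$-bounded $\laps{s}(\psi_n-\psi)$ (here $q>2$ is essential) to upgrade to $L^2$-norm convergence. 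Only afterwards does one deduce Mosco convergence by using the VI solutions themselves as recovery sequences. The abstract stability theorem (\Cref{thm:Mosco}) plays no role in proving \Cref{th1}; you should rework the argument along the direct lines above.
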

\begin{proof}

{Given that $u$ is uniquely determined, it suffices to show the convergence of $u_n$ for some subsequence as $n \to \infty$.}

Testing the variational inequality \eqref{eq:qvi} for $u_n$ (when the constraint set is given by $ K_s(\psi_n)$) with $v = \psi_n$ we readily find
\[
 \|\laps{s} u_n\|_{L^2(\R^N)} \aleq \|\laps{s} \psi_n\|_{L^2(\R^N)} + \|f\|_{H^{s,2}_{00}(\Omega)^\ast}.
\]
In view of \eqref{eq:semi_norm} (up to a subsequence) we thus find that $u_n$ converges weakly to some $\tilde{u}$ in $H^{s,2}_{00}(\Omega)$, $\psi_n \to \psi$, $u_n \to \tilde{u}$ in $L^2(\Omega)$ and pointwise a.e., and $\laps{s} u_n \rightharpoonup \laps{s} \tilde{u}$ in $L^2(\R^N)$.  {Further, the weak lower semicontinuity property of the norm  implies}
\begin{equation}\label{eq:M2}
 \|\laps{s} \tilde{u}\|_{L^2(\R^N)}^2 \leq \liminf_{{n \to \infty}} \|\laps{s} u_n\|_{L^2(\R^N)}^2.
\end{equation}

On the other hand, $u_n-\psi_n \geq 0$ a.e., and since the subsequence of $\psi_n$ and $u_n$ converge a.e. we find in the limit that $\tilde{u} -\psi \geq 0$ a.e., that is $$\tilde{u} \in K_s(\psi).$$

Testing again the variational inequality \eqref{eq:qvi} for $u_n$ with now $u_n + \phi$ (for some nonnegative $\phi$) we find in the distributional sense that
\[
 \laps{2s} u_n - f \geq 0 \quad \forall n \in \N \quad \text{in $\Omega$}.
\]

Moreover since we have from the weak convergence of $u_n$ that 
\[
 \laps{2s} u_n - f \rightharpoonup \laps{2s} \tilde{u} - f \quad \text{in $H^{s,2}_{00}(\Omega)^\ast$},
\]
we observe
\[
 \laps{2s} \tilde{u} - f \geq 0 \quad \text{in $\Omega$} . 
\]

From \Cref{th:convergence}, we obtain that for any $p < 2$, 
\[
 \sup_{\|\varphi\|_{H^{s,p'}_{00}(\Omega)} \leq 1} \int_{\R^N} \left(\laps{2s} u_n -\laps{2s} \tilde{u} \right) \varphi \xrightarrow{{n \to \infty}} 0,
\]
where (as usual) the above integral is understood in the duality pairing sense. Then
\[
 \sup_{\|\varphi\|_{H^{s,p'}_{00}(\Omega)} \leq 1} \int_{\R^N} \left(\laps{s} u_n -\laps{s} \tilde{u} \right) \laps{s}\varphi \xrightarrow{{n \to \infty}} 0,
\]
and hence
\begin{equation}\label{eq:strongLpconvergence}
 \|\laps{s} u_n - \laps{s} \tilde{u}\|_{L^{p}(\R^N)} \xrightarrow{{n \to \infty}} 0.
\end{equation}
Note that we can conclude that $\laps{s} u_n \xrightarrow{{n \to \infty}} \laps{s} \tilde{u}$ in $L^2(\R^N)$ if we show
\begin{equation}\label{eq:normconvergence}
 \|\laps{s} u_n \|_{L^2(\R^N)} \xrightarrow{{n \to \infty}} \|\laps{s} \tilde{u}\|_{L^{2}(\R^N)} .
\end{equation}
To establish \eqref{eq:normconvergence}, given that \eqref{eq:M2} holds, it only remains to show
\[
 \limsup_{{n \to \infty}} \|\laps{s} u_n \|_{L^2(\R^N)} \leq \|\laps{s} \tilde{u}\|_{L^{2}(\R^N)}.
\]
For this we test the variational inequality \eqref{eq:qvi} for $u_n$ with $v := \psi_n + \tilde{u} - \psi$; observe that $\tilde{u}-\psi \geq 0$ since $\tilde{u} \in K(\psi)$. Then
\[
 \int_{\R^N} \laps{s} u_n \laps{s}(\psi_n + \tilde{u} - \psi-u_n ) \geq f[\psi_n + \tilde{u} - \psi-u_n ].
\]
i.e.
\[
\begin{split}
 \|\laps{s} u_n\|_{L^2(\R^N)}^2 \leq &\int_{\R^N} \laps{s} u_n \laps{s}\tilde{u}+\int_{\R^N} \laps{s} (u_n -\tilde{u})\laps{s}(\psi_n - \psi)\\ 
 &+\int_{\R^N} \laps{s} \tilde{u} \laps{s}(\psi_n - \psi) + f[\psi-\psi_n]+f[u_n-\tilde{u}].
 \end{split}
\]
By weak convergence $u_n$ to $u$ in $H^{s,2}_{00}(\Omega)$, we observe
\[
 \lim_{{n \to \infty}} \int_{\R^N} \laps{s} u_n \laps{s}\tilde{u} = \|\laps{s} \tilde{u}\|_{L^2(\R^N)}^2, 
\]
and by the weak convergence of $\psi_n$ to $\psi$ in $H^{s,q}_{00}(\Omega)$, we have
\[
 \lim_{{n \to \infty}} \left(f[\psi-\psi_n]+f[u_n-\tilde{u}] \right)= 0,
\]
and
\[
\lim_{{n \to \infty}} \int_{\R^N} \laps{s} \tilde{u} \laps{s}(\psi_n - \psi) =0.
\]
By the uniform boundedness of $\laps{s} (\psi_n-\psi)$ in $L^q(\R^N)$ (recall that $q>2$) and the strong convergence of $\laps{s} u_n$ in $L^{q'}(\R^N)$,
\begin{align*}
&\left |\int_{\R^N} \laps{s} (u_n -\tilde{u})\laps{s}(\psi_n - \psi) \right |\aleq \\
&\qquad\qquad\|\laps{s} u_n - \laps{s} \tilde{u}\|_{L^{q'}(\R^N)}\, \|\laps{s}(\psi_n - \psi)\|_{L^q(\R^N)} \xrightarrow{{n \to \infty}} 0.	
\end{align*}

In conclusion,
\[
 \limsup_{{n \to \infty}} \|\laps{s} u_n\|_{L^2(\R^N)}^2 \leq \int_{\R^N} \laps{s} \tilde{u} \laps{s}\tilde{u} \\
\]
which in view of \eqref{eq:M2} establishes \eqref{eq:normconvergence}, which in turn establishes the strong $L^2(\mathbb{R}^N)$-convergence of $\laps{s} u_n$ to $\laps{s} \tilde{u}$.

Finally, let $w\in K_s(\psi)$ be arbitrary and consider the test function $v = \psi_n + w - \psi \in K_s(\psi_n)$ on the variational inequality associated to $u_n$, i.e.,
\begin{equation*}
	\int_{\R^N} \laps{s} u_n \laps{s}(\psi_n + w - \psi-u_n ) \geq f[\psi_n + w - \psi-u_n ].
\end{equation*}
Then, given that $\laps{s} u_n\to \laps{s} \tilde{u}$ in $L^2(\mathbb{R}^N)$, $\psi_n \rightharpoonup \psi$ in $H^{s,q}_{00}(\Omega)$, and $u_n \rightharpoonup \tilde{u}$ in $H^{s,2}_{00}(\Omega)$, we have 
\begin{equation*}
	\int_{\R^N} \laps{s} \tilde{u} \laps{s}( w -\tilde{u} ) \geq f[ w - \tilde{u}  ].
\end{equation*}
Since $\tilde{u}\in K_s(\psi)$, and $w\in K_s(\psi)$ was arbitrary, then $\tilde{u}=u$, the unique solution to \eqref{eq:qvi}, which concludes the proof.
\end{proof}

As in \cite{BM79} (after Definition 9) we obtain

\begin{corollary}\label{cor:Mosco}
Let $\psi_n$ weakly converge to $\psi$  in $H^{s,q}_{00}(\Omega)$ for $q>2$. Then $K_s(\psi_n)$ Mosco-converges to $K_s(\psi)$ as $n\to\infty$, i.e.,
\begin{equation*}
	K_s(\psi_n)\Mosco K_s(\psi),
\end{equation*}
\end{corollary}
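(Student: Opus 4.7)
The plan is to verify directly the two conditions (I) and (II) of \Cref{definition:MoscoConvergence} for $V = H^{s,2}_{00}(\Omega)$, with $K_n := K_s(\psi_n)$ and $K := K_s(\psi)$. The key engine is \Cref{th1}, which already encodes the Murat/Brezis-type compactness; everything else is extraction of subsequences and pointwise limits.

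For condition (II), suppose $w_n \in K_s(\psi_n)$ with $w_n \rightharpoonup w$ in $H^{s,2}_{00}(\Omega)$. I first observe that since $\Omega$ is bounded and $q > 2$, weak convergence in $H^{s,q}_{00}(\Omega)$ implies weak convergence in $H^{s,2}_{00}(\Omega)$ (via H\"older on the $L^q$-norm of $\laps{s}\psi_n$ supported inside a bounded set). Then I apply the Rellich-type compact embedding in \Cref{pr:rellich}(1) to both sequences: along a common subsequence, $w_n \to w$ and $\psi_n \to \psi$ in $L^2(\Omega)$, and hence also pointwise a.e. (up to a further subsequence). Passing to the limit in the inequality $w_n \geq \psi_n$ a.e.\ gives $w \geq \psi$ a.e., i.e.\ $w \in K_s(\psi)$.

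For condition (I), given $w \in K_s(\psi)$, I construct the recovery sequence $w_n \in K_s(\psi_n)$ as the projection of $w$ onto $K_s(\psi_n)$ with respect to the inner product $(u,v) \mapsto \int_{\R^N} \laps{s} u\, \laps{s} v$. Concretely, $w_n$ is the unique solution, provided by \Cref{la:existunique}, of
\[
  w_n \in K_s(\psi_n), \qquad \int_{\R^N} \laps{s} w_n\, \laps{s}(v - w_n) \geq f[v - w_n] \quad \forall v \in K_s(\psi_n),
\]
with the choice $f[v] := \int_{\R^N} \laps{s} w\, \laps{s} v$, which belongs to $H^{s,2}_{00}(\Omega)^\ast$. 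By \Cref{th1}, applied to this fixed $f$ with the weakly convergent obstacles $\psi_n \rightharpoonup \psi$ in $H^{s,q}_{00}(\Omega)$, the sequence $w_n$ converges strongly in $H^{s,2}_{00}(\Omega)$ to the unique solution $w_\infty \in K_s(\psi)$ of the analogous variational inequality on $K_s(\psi)$. But by our choice of $f$, that limiting problem is exactly the projection of $w$ onto $K_s(\psi)$, and since $w$ already lies in $K_s(\psi)$ we must have $w_\infty = w$. Thus $w_n \in K_s(\psi_n)$ and $w_n \to w$ strongly, establishing (I).

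Since the entire non-trivial content has already been carried out in \Cref{th:convergence2} and \Cref{th1}, no significant obstacle remains; the only care points are (a) ensuring that weak convergence in $H^{s,q}_{00}(\Omega)$ passes to weak convergence in $H^{s,2}_{00}(\Omega)$ (needed in (II) to justify extracting an a.e.\ convergent subsequence from $\psi_n$), and (b) recognizing that the natural candidate in (I) is the metric projection, so that \Cref{th1} can be invoked without modification.
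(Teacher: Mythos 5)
Your proposal is correct and follows essentially the same route as the paper: for (I) you build the recovery sequence by solving the variational inequality of \Cref{la:existunique} with $f := \laps{2s} w$ over $K_s(\psi_n)$ and invoke \Cref{th1} (the paper does exactly this, just calling the generic element $u$ rather than $w$ and not phrasing it as a metric projection); for (II) you pass the pointwise constraint to the limit after extracting a.e.\ convergent subsequences via Rellich, again as in the paper. One small correction to your care point (a): the claim that weak $H^{s,q}_{00}(\Omega)$-convergence implies weak $H^{s,2}_{00}(\Omega)$-convergence is true, but not ``via H\"older on the $L^q$-norm of $\laps{s}\psi_n$ supported inside a bounded set'' --- the $s$-Laplacian is nonlocal, so $\laps{s}\psi_n$ is not compactly supported even though $\psi_n$ is; the correct justification is \Cref{lem:Hineq}. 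In any case this embedding is not actually needed in (II): \Cref{pr:rellich}(1) applies directly to $\psi_n$ in $H^{s,q}_{00}(\Omega)$ (take $p=q$ there) and already gives a subsequence converging a.e., so the detour through $H^{s,2}_{00}(\Omega)$ can be dropped.
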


\begin{proof}
We prove both items in \Cref{definition:MoscoConvergence}.
\begin{enumerate}

\item[(I)] Set $f := \laps{2s} u \in H^{s,2}_{00}(\Omega)^\ast$ where $u\in K_s(\psi)$ is arbitrary. Then
\[
 \int_{\R^N} \laps{s} u\, \laps{s} (v-u) = f[v-u] \quad \forall v \in C_c^\infty(\Omega),
\]
and in particular this equation holds for all $v \in K_s(\psi)$. For this $f$ and $\psi_n$ solve the inequality \Cref{la:existunique}, and obtain $u_n \in K_s(\psi_n)$. In view of \Cref{th1}, $u_n$ converges strongly to $u$ in $H^{s,2}_{00}(\Omega)$.

\item[(II)] follows from the Rellich compactness result: $u_n$ and $\psi_n$ converge a.e., so $u_n \geq \psi^n$ a.e. implies in the limit $u \geq \psi$ a.e., i.e.,  $u \in K_s(\psi)$.
\end{enumerate}
\end{proof}

\subsection{Quasi-variational Inequalities}
\label{s:qvi}

The above results directly allows us to establish existence of solutions to quasi-variational inequalities (QVIs) as we show next. Consider an operator $\Phi$, and subsequently the following QVI: Given $f \in H^{s,2}_{00}(\Omega)^\ast$,   
\begin{equation}\label{eq:fqvi}
	\text{Find } u \in K_s(\Phi(u)) \::\:\int_{\R^N} \laps{s} u \laps{s}( w -u ) \geq f[ w - u  ] \quad  \forall w \in K_s(\Phi(u)).
\end{equation}
We now show that the above problem has solutions via Corollary \ref {cor:Mosco}. 

\begin{theorem}
	Let $\Phi:H^{s,2}_{00}(\Omega)\to H^{s,q}_{00}(\Omega)$ for some $q>2$ be weak-weak continuous, i.e., if $v_n\rightharpoonup v$ in $H^{s,2}_{00}(\Omega)$, then $\Phi(v_n)\rightharpoonup \Phi(v)$ in $H^{s,q}_{00}(\Omega)$. Then, \eqref{eq:fqvi} has at least one solution.
\end{theorem}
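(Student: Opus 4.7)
The plan is to recast the quasi-variational inequality \eqref{eq:fqvi} as a fixed-point problem for the solution map of the parent variational inequality and apply Schauder's fixed-point theorem using the Mosco-type convergence established in \Cref{cor:Mosco} and \Cref{th1}. For each $v\in H^{s,2}_{00}(\Omega)$, I first would observe that $\Phi(v)\in H^{s,q}_{00}(\Omega)$ embeds into $H^{s,2}_{00}(\Omega)$ (since $q>2$), via \Cref{lem:Hineq} combined with the density result \Cref{th:density}; hence \Cref{la:existunique} associates to $v$ a unique element $T(v)\in K_s(\Phi(v))$ solving \eqref{eq:qvi} with obstacle $\psi:=\Phi(v)$. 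Fixed points of the map $T:H^{s,2}_{00}(\Omega)\to H^{s,2}_{00}(\Omega)$ are then exactly the solutions of \eqref{eq:fqvi}.

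Next I would show that $T$ is sequentially weak-to-strong continuous. If $v_n\rightharpoonup v$ in $H^{s,2}_{00}(\Omega)$, the hypothesis on $\Phi$ gives $\Phi(v_n)\rightharpoonup \Phi(v)$ in $H^{s,q}_{00}(\Omega)$; \Cref{cor:Mosco} then supplies $K_s(\Phi(v_n))\Mosco K_s(\Phi(v))$, and \Cref{th1} upgrades this to strong convergence $T(v_n)\to T(v)$ in $H^{s,2}_{00}(\Omega)$. By reflexivity, bounded sequences admit weakly convergent subsequences, so $T$ maps bounded sets into strongly relatively compact sets and is, in particular, compact.

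To close via Schauder I would need an invariant ball. Testing \eqref{eq:qvi} for $u=T(v)$ against the admissible element $w=\Phi(v)\in K_s(\Phi(v))$, and applying Cauchy--Schwarz, Young's inequality, and the norm identification \eqref{eq:semi_norm}, yields an a priori bound of the form
\begin{equation*}
  \|T(v)\|_{H^{s,2}_{00}(\Omega)} \;\leq\; C\bigl(\|\Phi(v)\|_{H^{s,2}_{00}(\Omega)} + \|f\|_{H^{s,2}_{00}(\Omega)^{\ast}}\bigr).
\end{equation*}
Moreover, weak-weak continuity of $\Phi$ together with reflexivity of $H^{s,q}_{00}(\Omega)$ forces $\Phi$ to be bounded on bounded subsets (otherwise some bounded $v_n$ with $\|\Phi(v_n)\|\to\infty$ would, along a weakly convergent subsequence, contradict the weak convergence $\Phi(v_n)\rightharpoonup \Phi(v)$). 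I would then select $R>0$ so that the closed ball $B_R\subset H^{s,2}_{00}(\Omega)$ satisfies $T(B_R)\subseteq B_R$, and apply the Schauder--Tychonoff theorem in the weak topology on $B_R$---which is convex and weakly compact by reflexivity---using that $T|_{B_R}$ is weak-to-strong, hence weak-to-weak, continuous. The fixed point produced is the desired solution to \eqref{eq:fqvi}.

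The hard part will be constructing $R$ with $T(B_R)\subseteq B_R$: from qualitative boundedness of $\Phi$ on bounded sets alone the invariance need not hold, so quantitative control (e.g.\ sublinear growth of $\|\Phi(v)\|_{H^{s,q}_{00}(\Omega)}$ in $\|v\|_{H^{s,2}_{00}(\Omega)}$) is the delicate ingredient. An alternative I would pursue if this fails is the Leray--Schauder continuation theorem for the compact map $T$, which replaces the invariance requirement by the exclusion of unbounded solution branches $v=\lambda T(v)$, $\lambda\in[0,1]$---a question that in turn reduces to the same a priori estimate applied to the parametrized problems.
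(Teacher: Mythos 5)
Your proposal follows the same overall strategy as the paper's proof: define the variational-inequality solution operator $T$ with obstacle $\Phi(v)$, identify QVI solutions with fixed points of $T$, use \Cref{cor:Mosco} and \Cref{th1} (via \Cref{thm:Mosco}) to upgrade weak-weak continuity of $\Phi$ to weak-to-strong continuity of $T$, and finish with a Schauder-type fixed-point argument. The only cosmetic difference is that you invoke Schauder--Tychonoff on the weakly compact ball $B_R$ using weak-to-weak continuity of $T$, while the paper uses the compactness of $T$ (weak-to-strong continuity) and the Schauder theorem on a strongly compact convex set; both routes are standard and equivalent in effort.

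The more substantive point is one you raise yourself: the invariance $T(B_R)\subseteq B_R$. The a priori estimate obtained by testing \eqref{eq:qvi} with $w=\Phi(v)$ gives $\|\laps{s}T(v)\|_{L^2(\R^N)}\aleq \|\laps{s}\Phi(v)\|_{L^2(\R^N)}+\|f\|_{H^{s,2}_{00}(\Omega)^\ast}$, so the bound on $T(v)$ scales with $\|\Phi(v)\|$. Your observation that weak-weak continuity plus reflexivity forces $\Phi$ to be bounded on bounded sets is correct and useful, but as you note it only gives qualitative boundedness, not the quantitative control needed for a self-map of a fixed ball. The paper disposes of this in a single sentence (``coercivity of fractional differential operator implies $T(H^{s,2}_{00}(\Omega))\subset B_R$''), which is stated as a global uniform bound on the image of $T$; without additional structure on $\Phi$ (global boundedness of $\Phi$, sublinear growth of $\|\Phi(v)\|_{H^{s,q}_{00}(\Omega)}$ in $\|v\|_{H^{s,2}_{00}(\Omega)}$, or a one-sided condition such as $\Phi(v)\le 0$ a.e.\ which makes $w=0$ admissible and gives $\|T(v)\|\aleq \|f\|$ uniformly) that claim does not follow from coercivity alone. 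So your hesitation is well placed; in this respect your write-up is in fact more careful than the published proof, and your Leray--Schauder alternative, which trades the invariant ball for an a priori estimate on the branch $\{u=\lambda T(u)\}$, runs into exactly the same need for quantitative control on $\Phi$.
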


\begin{proof}
	First, define the map $T$ such that $T(v)\in H^{s,2}_{00}(\Omega)$ corresponds to the unique solution of the variational inequality
\begin{equation*}
	\text{Find }\: y\in K_s(\Phi(v)) \quad:\quad  \int_{\R^N} \laps{s} y \laps{s}( w -y ) \geq f[ w - y  ], \quad \forall w \in K_s(\Phi(v)).
\end{equation*}
Then, solutions to the QVI are equivalently defined as fixed points of the map $T$, i.e., $v$ solves the QVI if and only if 
\begin{equation*}
	T(v)=v.
\end{equation*}
Coercivity of fractional differential operator implies $T(H^{s,2}_{00}(\Omega))\subset B_R(0; H^{s,2}_{00}(\Omega))$ for some $R>0$. Hence, any sequence $\{v_n\}$ in $B_R(0; H^{s,2}_{00}(\Omega))$ contains a subsequence such that $v_n\rightharpoonup v $ and  $T(v_n)\rightharpoonup z $ in $H^{s,2}_{00}(\Omega)$ for some $v$ and $z$. Since $\Phi(v_n)\rightharpoonup \Phi(v)$ in $H^{s,q}_{00}(\Omega)$ with $q>2$, then by Corollary \ref {cor:Mosco} we observe $K_s(\Phi(v_n))\Mosco K_s(\Phi(v))$, and hence $T(v_n)\to T(v) $ in $H^{s,2}_{00}(\Omega)$ by Theorem \ref{thm:Mosco}, i.e., the map $T$ is compact and a fixed point exists due to the theorem of Schauder.
\end{proof}

\section*{Remarks and open questions}\label{s:remarks}


\begin{itemize}
\item The statement of Theorem~\ref{th:convergence} is also valid for the classical Sobolev space $H^{1,p}_0(\Omega)$ for any $q < p$ -- the proof changes only slightly and is left to the interested reader. For $p=2$ this was proven in \cite{BM79,B81}. Our argument for \Cref{th:convergence} is very much inspired by \cite{B81} but it is significantly more general.
\item We expect this result to be true for functionals in $(H^{s,p}_{00}(\Omega))^\ast$ with convergence $\brac{H^{s,q}_{00}(\Omega)}^{\ast}$ for $q < p$. However, our proof does not cover this case since we are not able to classify the dual space of $H^{s,p}_{00}(\Omega)$ as in \Cref{pr:id} -- namely the global estimate \eqref{eq:id:est} is unknown when $L^2(\R^N)$ and $(H^{s,2}_{00}(\Omega))^\ast$ are replaced with $L^p(\R^N)$ and $(H^{s,p}_{00}(\Omega))^\ast$, $p \neq 2$. {The issue is boundary regularity theory for equations involving the so-called regional $s$-Laplacian, which is still under active investigation, see e.g. \cite{Fall20,Grubb15}. Observe that their results assume regularity of $\partial \Omega$.}

\item 
For $s \in (0,1)$, $p \in (1,\infty)$ denote
\[
 [f]_{W^{s,p}(\Omega)} = \brac{\int_{\Omega}\int_{\Omega} \frac{|f(x)-f(y)|^p}{|x-y|^{N+sp}}\, dx\, dy}^{\frac{1}{p}},
\]
and for $s \in (0,1)$ we set
\[
 W_{00}^{s,p}(\Omega) := \left \{ f \in L^p(\R^N) \, : \, [f]_{W^{s,p}(\R^N)}< \infty, \quad f \equiv 0 \text{ in $\Omega^c$} \right \}
\]
endowed with the norm $\|f\|_{W^{s,p}(\R^N)}$.

It is likely that a statement as in \Cref{th:convergence} fails, i.e. that there is no $(W^{s,q})^\ast$-convergence for $q <2$
-- due to the fact that $W^{s,2}$-functions may not even belong to $W^{s,p}_{loc}$ for $p<2$, \cite{MS15}.
\item Another functional analysis approach to fractional variational inequalities was presented \cite{SV13}.
\item {Results related to Murat’s theorem for non-linear operators have been investigated in \cite{BM92,DMM98}}
\end{itemize}

\appendix 

\section{On Distributions and Functions}

In this section we gather some standard results from Sobolev spaces and distributions. 
Firstly, nonnegative distributions correspond to Radon measures, more precisely we have \cite[Theorem~6.22]{LL01}. 
\begin{lemma}[Schwarz]\label{measurefact}
Every nonnegative distribution $f \in (C_c^\infty(\R^N))^\ast$, i,e.
\[
 f[\varphi] \geq 0 \quad \forall \varphi \in C_c^\infty(\R^N,[0,\infty))
\]
is of the form
\[
 f[\varphi] = \int f d\mu
\]
for some {non-negative} Radon measure $\mu$.
\end{lemma}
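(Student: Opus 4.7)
The plan is to reduce the statement to the Riesz--Markov--Kakutani representation theorem, whose natural domain is the space of continuous compactly supported functions $C_c(\R^N)$, not the smooth ones. The crux is therefore to extend the given positive distribution $f$ from $C_c^\infty(\R^N)$ to a positive linear functional on $C_c(\R^N)$, and this extension rests on obtaining a local sup-norm bound.

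First, I would establish the \emph{local $L^\infty$-continuity} of $f$. Fix any compact set $K \subset \R^N$ and pick a nonnegative cutoff $\chi_K \in C_c^\infty(\R^N)$ with $\chi_K \equiv 1$ on an open neighborhood of $K$. For every $\varphi \in C_c^\infty(\R^N)$ with $\supp \varphi \subset K$, both of the functions $\|\varphi\|_{L^\infty(\R^N)}\, \chi_K \pm \varphi$ are nonnegative, so nonnegativity of $f$ together with linearity yields
\[
 |f[\varphi]| \leq f[\chi_K]\, \|\varphi\|_{L^\infty(\R^N)}.
\]
Thus $f$ is continuous in the sup-norm on smooth functions with support in $K$, with constant $C_K := f[\chi_K]$ depending only on $K$.

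Second, I would extend $f$ to all of $C_c(\R^N)$. Given $\psi \in C_c(\R^N)$ with $\supp \psi \subset K$, a standard mollification $\psi_\eps := \psi \ast \nu_\eps$ (with $\nu_\eps$ a nonnegative approximate identity) lies in $C_c^\infty(\R^N)$ with support inside a fixed slightly larger compact set $\widetilde K$ for all small $\eps$, and $\|\psi_\eps - \psi_{\eps'}\|_{L^\infty(\R^N)} \to 0$ as $\eps,\eps' \to 0$. The estimate above applied to $\varphi := \psi_\eps - \psi_{\eps'}$ shows that $\{f[\psi_\eps]\}$ is Cauchy, hence $\tilde f[\psi] := \lim_{\eps \to 0} f[\psi_\eps]$ is well-defined, independent of the mollifier, linear, and still nonnegative (since nonnegativity is preserved under uniform limits when testing against $C_c^\infty$ approximants built from nonnegative mollifiers of a nonnegative $\psi$). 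The extension agrees with $f$ on $C_c^\infty(\R^N)$.

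Third, I would invoke the Riesz--Markov--Kakutani representation theorem: every positive linear functional on $C_c(\R^N)$ is given by integration against a unique nonnegative Radon measure $\mu$, so
\[
 \tilde f[\psi] = \int_{\R^N} \psi \, d\mu \qquad \forall \psi \in C_c(\R^N),
\]
and restricting to $\psi = \varphi \in C_c^\infty(\R^N)$ gives the claim. The only real subtlety is in the extension step; once the uniform local sup-norm bound is in hand, the rest is a direct application of a standard representation theorem.
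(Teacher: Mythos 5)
Your proof is correct and follows exactly the standard argument: the paper does not give its own proof but cites Lieb--Loss, Theorem~6.22, and the argument there is the same one you present --- deduce a local sup-norm bound from positivity via the cutoff $\|\varphi\|_{L^\infty}\chi_K \pm \varphi \geq 0$, extend by density to a positive linear functional on $C_c(\R^N)$, and invoke the Riesz--Markov--Kakutani representation theorem. There is no gap; the mollification step to pass from $C_c^\infty$ to $C_c$ is handled carefully and the preservation of positivity under the extension is correctly justified.
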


Secondly we recall the Vitali convergence theorem, \cite[Section 4.5, Exercise 4.14.4]{B11}.
\begin{lemma}\label{la:vitali}
Let $f_n$ be a sequence $L^p(\Omega)$, $1 \leq p < \infty$. Then $f_n \xrightarrow{n \to \infty} f$ if and only if
\begin{enumerate}
 \item convergence in measure: $\forall \eps > 0$, $\lim_{n \to \infty} \mathcal{L}^n\brac{\left \{x: |f_n(x)-f(x)|\geq \eps\right \}} = 0$
 \item for any $\eps > 0$ there exists  a measurable set $E_\eps$ with $|E_\eps|<\infty$ such that 
 \[
  \sup_{n} \int_{\Omega \backslash E_\eps} |f_n|^p < \eps.
 \]
\item for any $\eps > 0$ there exists $\delta > 0$ such that for any measurable $E \subset \Omega$ with $|E| < \delta$ we have
 \[
  \sup_{n} \int_{E} |f_n|^p < \eps.
 \]
\end{enumerate}
\end{lemma}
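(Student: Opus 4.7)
The plan is to prove both directions of this classical equivalence separately.

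For the necessity direction, assume $\|f_n - f\|_{L^p(\Omega)} \to 0$. Condition (1) is immediate from Chebyshev's inequality, which yields $\mathcal{L}^N(\{|f_n-f|\geq \eps\}) \leq \eps^{-p}\|f_n-f\|_{L^p(\Omega)}^p$. Conditions (2) and (3) I would derive from the analogous statements for the single limit $f \in L^p(\Omega)$ (which hold by absolute continuity of the integral and $\sigma$-finiteness of $\Omega$), combined with the elementary inequality $|f_n|^p \leq 2^{p-1}(|f|^p + |f_n-f|^p)$. The finitely many initial terms that are not yet $L^p$-close to $f$ are absorbed by further enlarging $E_\eps$ or shrinking $\delta$, respectively.

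For the sufficiency direction, assume (1), (2), (3). First, convergence in measure yields an almost-everywhere convergent subsequence, and Fatou's lemma together with the uniform bound $\sup_n \|f_n\|_{L^p(\Omega)} < \infty$ (which follows from (2) and (3) applied with $\eps = 1$) gives $f \in L^p(\Omega)$; moreover the properties (2) and (3) transfer to $f$ by Fatou, with possibly worse constants. To estimate $\|f_n-f\|_{L^p(\Omega)}$, fix $\eta > 0$. Pick $E$ of finite measure via (2) so that $\int_{\Omega\setminus E}(|f_n|^p + |f|^p) < \eta$ uniformly in $n$. Split $E = A_{n,\delta} \cup B_{n,\delta}$ with $A_{n,\delta} := E \cap \{|f_n-f|\geq \delta\}$ and $B_{n,\delta} := E \setminus A_{n,\delta}$. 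On $B_{n,\delta}$ the pointwise bound $|f_n - f|^p < \delta^p$ gives a contribution at most $\delta^p |E|$. On $A_{n,\delta}$ I would again use $|f_n-f|^p \leq 2^{p-1}(|f_n|^p + |f|^p)$ and invoke (3) applied to both $f_n$ and $f$, which makes the contribution small once $|A_{n,\delta}|$ is small enough; this is guaranteed for all sufficiently large $n$ by (1).

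The main obstacle is the bookkeeping: the parameters must be chosen in the correct order -- first $E$ from tightness, then the absolute continuity threshold $\delta_0$ from (3), then $\delta$ small enough that $\delta^p|E|$ is under control, and only then an index $n$ large enough that convergence in measure gives $|A_{n,\delta}| < \delta_0$. The constants $2^{p-1}$ must be tracked carefully so that the final bound on $\|f_n-f\|_{L^p(\Omega)}^p$ is genuinely $O(\eta)$ and not inflated by an unbounded factor. One minor subtlety is that applying Fatou to carry (3) from the sequence to $f$ requires passing through the a.e.-convergent subsequence first, but once $f \in L^p$ is known, uniqueness of the measure-limit identifies it across all such subsequences.
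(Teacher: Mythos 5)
The paper does not actually prove \Cref{la:vitali}: it is stated as a recalled standard result with a citation to Bogachev's \emph{Measure Theory} (Section~4.5, Exercise~4.14.4), so there is no paper proof to compare against. Your argument is the standard textbook proof of the Vitali convergence theorem and is essentially correct, with the right order of quantifiers in the sufficiency direction (first tightness set $E$, then the absolute-continuity threshold, then $\delta$ so that $\delta^p|E|$ is small, then $n$ large via convergence in measure). Two small points worth spelling out more explicitly: (i) to extract the uniform bound $\sup_n\|f_n\|_{L^p(\Omega)}<\infty$ from (2)--(3) with $\eps=1$, you must also partition the finite-measure set $E_1$ into finitely many pieces each of measure below the $\delta$ from~(3), so that $\int_{E_1}|f_n|^p$ is controlled by a finite multiple of~$1$; and (ii) the Fatou transfer of (2)--(3) to $f$ produces $\le\eps$ rather than $<\eps$, which is harmless but should be stated. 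Neither of these is a gap in the approach; they are just the bookkeeping you already flagged as the main obstacle.
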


A direct consequence of \Cref{la:vitali} is
\begin{lemma}[Vitali convergence]\label{la:conv}
Let $\Omega \subset \R^N$ be open and bounded.
Let $f_k$ be uniformly bounded in $L^p(\Omega)$ and assume that $f_k \xrightarrow{k \to \infty} f$ in $L^1(\Omega)$. Then $f_k \xrightarrow{k \to \infty} f$ in $L^q(\Omega)$ for any $q \in [1,p)$.
\end{lemma}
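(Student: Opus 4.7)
The plan is to derive \Cref{la:conv} directly from \Cref{la:vitali} by verifying its three hypotheses for the sequence $|f_k - f|^q$, or equivalently for $f_k$ with limit $f$ in the $L^q$-sense. Since $\Omega$ is bounded, the second condition (existence of a set $E_\eps$ of finite measure capturing most of the mass) is free: one may take $E_\eps = \Omega$ for every $\eps>0$, and $|\Omega|<\infty$.

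First I would handle convergence in measure. By assumption $f_k \to f$ in $L^1(\Omega)$, and $L^1$ convergence implies convergence in measure by Chebyshev's inequality: for any $\eps>0$,
\[
 \mathcal{L}^N\brac{\{x \in \Omega : |f_k(x)-f(x)|\geq \eps\}} \leq \frac{1}{\eps}\, \|f_k-f\|_{L^1(\Omega)} \xrightarrow{k \to \infty} 0.
\]
This yields hypothesis (1) of \Cref{la:vitali} for the sequence $f_k$ with limit $f$.

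The main step is the uniform integrability condition (3). Here I would exploit the fact that $q<p$ via H\"older's inequality with conjugate exponents $p/q$ and $(p/q)'=p/(p-q)$: for any measurable $E\subset \Omega$,
\[
 \int_E |f_k|^q \, dx \leq \brac{\int_E |f_k|^p \, dx}^{q/p}\, |E|^{1-q/p} \leq M^{q/p}\, |E|^{1-q/p},
\]
where $M := \sup_k \|f_k\|_{L^p(\Omega)}^p < \infty$. Since $1 - q/p > 0$, given any $\eta > 0$ one can choose $\delta > 0$ so that $M^{q/p}\delta^{1-q/p}<\eta$, and then $|E|<\delta$ implies $\sup_k \int_E |f_k|^q < \eta$. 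This is also enough to obtain equi-integrability of $|f_k-f|^q$ after noting that $|f_k-f|^q \aleq |f_k|^q + |f|^q$ and $f \in L^p(\Omega)$ (the latter following from Fatou's lemma applied to a subsequence converging a.e., since convergence in measure yields an a.e.\ convergent subsequence).

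With these ingredients in place, applying \Cref{la:vitali} to $|f_k-f|^q$ (or directly noting that the three hypotheses transfer to the $L^q$ setting) will yield $\|f_k - f\|_{L^q(\Omega)} \to 0$, completing the proof. I do not anticipate a substantive obstacle: the only delicate point is ensuring that $f$ itself lies in $L^p(\Omega)$, which follows from weak-$\ast$/Fatou-type arguments on the uniformly $L^p$-bounded sequence, and this is standard.
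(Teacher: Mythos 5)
Your proof is correct and follows essentially the same route as the paper: both verify the three Vitali hypotheses from \Cref{la:vitali}, obtaining convergence in measure from the $L^1$-convergence, tightness for free from boundedness of $\Omega$, and uniform $q$-integrability from H\"older's inequality using $q<p$. The extra remarks you add (Chebyshev for convergence in measure, Fatou to place $f$ in $L^p$) are details the paper leaves implicit.
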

\begin{proof}
The first property of Vitali follows from $L^1$-convergence.

Now observe that for $q < p$ by H\"older inequality,
\[
 \sup_{k} \int_{E} |f_k|^q \leq |E|^{\frac{p-q}{p}}\, \underbrace{\sup_{k} \|f_k\|_{L^p(\Omega)}^q}_{< \infty}
\]
This implies the third property (and the second one is trivial if $\Omega$ is bounded).
\end{proof}

Another consequence of Vitali's Lemma is the Brezis-Lieb lemma, \cite[Section 4.5, Exercise 4.17]{B11}.
\begin{lemma}[Brezis-Lieb Lemma]\label{la:BrezisLieb}
Let $p \in (1,\infty)$, $f_k,f \in L^p(\Omega)$  be such that 
\[
 \sup_{k} \|f_k\|_{L^p(\Omega)}  < \infty
\]
and $f_k \xrightarrow{k \to \infty} f$ almost everywhere in $\Omega$. Then
\begin{itemize}
\item \[
 \|f_k\|_{L^p(\Omega)}^p-\|f\|_{L^p(\Omega)}^p - \|f_k-f\|_{L^p(\Omega)}^p \xrightarrow{k \to \infty} 0.
\]
\item In particular if 
\[
 \lim_{k \to \infty} \|f_k\|_{L^p(\Omega)}^p=\|f\|_{L^p(\Omega)}^p 
\]
then $f_k \xrightarrow{k \to \infty} f$ in $L^p(\Omega)$.
\end{itemize}
\end{lemma}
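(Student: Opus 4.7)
The plan is to reduce the Brezis--Lieb identity to a pointwise elementary inequality combined with the Dominated Convergence Theorem. The key ingredient is the following algebraic fact: for every $\eps>0$ there exists $C_\eps=C(\eps,p)>0$ such that
\[
 \bigl| |a+b|^p - |a|^p - |b|^p \bigr| \leq \eps\, |a|^p + C_\eps\, |b|^p \qquad \forall\, a,b \in \mathbb{R}.
\]
I would establish this by $p$-homogeneity: on the compact set $\{(a,b):|a|+|b|=1\}$ the ratio $\bigl(|a+b|^p-|a|^p-|b|^p\bigr)/|a|^p$ tends to $0$ as $|b|\to 0$, so one can split the region $|b|\leq \delta|a|$ (where the left side is $\leq \eps|a|^p$) from the region $|b|\geq \delta|a|$ (where $|a|^p \lesssim |b|^p$), and rescale.

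Next, I would apply this inequality with $a:=f_k-f$ and $b:=f$, using $f_k=(f_k-f)+f$, to obtain the pointwise bound
\[
 g_k(x) \;:=\; \bigl| |f_k(x)|^p - |f_k(x)-f(x)|^p - |f(x)|^p \bigr| \;\leq\; \eps\, |f_k(x)-f(x)|^p + C_\eps\, |f(x)|^p.
\]
Note that $f\in L^p(\Omega)$, since $f_k\to f$ a.e.\ and $\sup_k\|f_k\|_{L^p}<\infty$ imply $\|f\|_{L^p}\leq \liminf_k \|f_k\|_{L^p}<\infty$ by Fatou. Following the classical Brezis--Lieb device, I would introduce the truncated excess
\[
 W_{k,\eps} \;:=\; \bigl( g_k - \eps\, |f_k-f|^p \bigr)_+.
\]
The pointwise bound gives $0 \leq W_{k,\eps} \leq C_\eps |f|^p$, which is an $L^1(\Omega)$ dominating function independent of $k$. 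Since $f_k\to f$ a.e., both $g_k\to 0$ and $|f_k-f|^p\to 0$ a.e., hence $W_{k,\eps}\to 0$ a.e. The Dominated Convergence Theorem then yields $\int_\Omega W_{k,\eps}\to 0$ as $k\to\infty$, for each fixed $\eps>0$.

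Finally, from $g_k \leq W_{k,\eps} + \eps |f_k-f|^p$ I would integrate to obtain
\[
 \int_\Omega g_k \;\leq\; \int_\Omega W_{k,\eps} \;+\; \eps\, \|f_k-f\|_{L^p(\Omega)}^p \;\leq\; \int_\Omega W_{k,\eps} \;+\; \eps\, M,
\]
where $M:=\sup_k 2^{p-1}(\|f_k\|_{L^p}^p + \|f\|_{L^p}^p)<\infty$. Letting $k\to\infty$ and then $\eps\to 0$ gives $\int_\Omega g_k \to 0$, which is the first assertion. The second assertion is then immediate: the first item implies $\bigl|\,\|f_k\|_{L^p}^p - \|f_k-f\|_{L^p}^p - \|f\|_{L^p}^p\,\bigr| \to 0$, so if in addition $\|f_k\|_{L^p}^p \to \|f\|_{L^p}^p$, we conclude $\|f_k-f\|_{L^p}^p \to 0$. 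The only technical obstacle is the proof of the elementary inequality; once that is in place, the Dominated Convergence argument on $W_{k,\eps}$ is routine.
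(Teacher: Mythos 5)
Your proof is correct, and it is the classical Brezis--Lieb argument: the elementary inequality $\bigl||a+b|^p-|a|^p-|b|^p\bigr|\le \eps|a|^p+C_\eps|b|^p$, the truncated excess $W_{k,\eps}=(g_k-\eps|f_k-f|^p)_+$ dominated by $C_\eps|f|^p$, dominated convergence for fixed $\eps$, and then $\eps\to 0$ using the uniform $L^p$ bound; the deduction of the second bullet from the first is immediate. The paper itself gives no proof of this lemma: it is stated with a reference (Bogachev, Section 4.5, Exercise 4.17) and framed as ``another consequence of Vitali's Lemma,'' i.e.\ the intended route there is uniform integrability of $|f_k|^p-|f_k-f|^p$ (which follows from the same elementary inequality) together with a.e.\ convergence and the Vitali convergence theorem already recorded as \Cref{la:vitali}. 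So your argument is a genuinely self-contained alternative that trades the uniform-integrability/Vitali framework for the positive-part truncation and dominated convergence; both hinge on the same pointwise inequality, and yours has the advantage of not invoking any machinery beyond Fatou and dominated convergence. One cosmetic remark: your compactness-and-homogeneity proof of the elementary inequality works (on $|b|\le\delta|a|$ normalize $a$ and use continuity of $t\mapsto|1+t|^p-1-|t|^p$ at $t=0$; on $|b|\ge\delta|a|$ bound crudely by $C(1+\delta^{-p})|b|^p$), but it can be shortcut by the mean value theorem plus Young's inequality, $\bigl||a+b|^p-|a|^p\bigr|\le p(|a|+|b|)^{p-1}|b|\le \eps|a|^p+C_\eps|b|^p$, which avoids the case analysis entirely.
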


\section*{Acknowledgment}
Financial support is acknowledged as follows:
\begin{itemize}
 \item HA is partially supported by Air Force Office of Scientific Research under Award NO: FA9550-19-1-0036, 
NSF grants DMS-1818772 and DMS-1913004, and Department of the Navy, Naval Postgraduate School under 
Award NO: N00244-20-1-0005.
\item CNR acknowledges the support of Germany's Excellence Strategy - The Berlin Mathematics Research Center MATH+ (EXC-2046/1, project ID: 390685689) within project AA4-3.
\item AS is supported by Simons foundation, grant no 579261.
\end{itemize}
The authors are grateful to the anonymous referees for careful reading and suggestions.

\bibliographystyle{abbrv}
\bibliography{bib}

\end{document}